\newcommand{\strsup}{\mathrm{strsup}}
\newcommand{\QQ}{\mathbb Q}
\newcommand{\RR}{\mathbb R}
\newcommand{\PP}{\mathbb P}
\newcommand{\sub}{\subseteq}
\newcommand{\cross}{\times}
\newcommand{\all}{\forall}
\newcommand{\om}{\omega}
\newcommand{\pow}{\mathcal{P}}
\newcommand{\OR}{\mathrm{OR}}
\newcommand{\Hull}{\mathrm{Hull}}
\newcommand{\cut}{\backslash}
\newcommand{\Tt}{\mathcal{T}}
\newcommand{\Uu}{\mathcal{U}}
\newcommand{\eot}{\mathrm{eot}}
\newcommand{\rg}{\mathrm{rg}}
\newcommand{\dom}{\mathrm{dom}}
\newcommand{\ins}{\trianglelefteq}
\newcommand{\nins}{\ntrianglelefteq}
\newcommand{\pins}{\triangleleft}
\newcommand{\crit}{\mathrm{cr}}
\newcommand{\rest}{\!\upharpoonright\!}
\newcommand{\com}{\circ}
\newcommand{\range}{\rg}
\newcommand{\lh}{\mathrm{lh}}
\newcommand{\Ult}{\mathrm{Ult}}
\newcommand{\sats}{\models}
\newcommand{\J}{\mathcal{J}}
\newcommand{\HOD}{\mathrm{HOD}}
\newcommand{\HC}{\mathrm{HC}}
\newcommand{\ZFC}{\mathrm{ZFC}}
\newcommand{\pistol}{\P}
\newcommand{\es}{\mathbb{E}}
\newcommand{\her}{\mathcal{H}}
\newcommand{\pred}{\mathrm{pred}}
\newcommand{\id}{\mathrm{id}}
\newcommand{\conc}{\ \widehat{\ }\ }
\newcommand{\bfSigma}{\undertilde{\Sigma}}
\DeclareMathOperator{\Th}{Th}
\DeclareMathOperator{\cof}{cof}
\newcommand{\Club}{\mathrm{Club}}
\newcommand{\psub}{\subsetneq}
\newcommand{\lpole}{\left\lfloor}
\newcommand{\rpole}{\right\rfloor}
\newcommand{\univ}[1]{\lpole #1\rpole}
\newcommand{\tu}{\textup}
\newcommand{\eqdef}{=_{\mathrm{def}}}
\newcommand{\passive}{\mathrm{pv}}
\newcommand{\card}{\mathrm{card}}
\newcommand{\dropset}{\mathscr{D}}
\newcommand{\wt}{\widetilde}
\declaretheoremstyle[bodyfont=\it]{slanted}
\declaretheoremstyle[bodyfont=\normalfont]{normal}
\declaretheorem[name=Definition,style=normal,qed=$\dashv$,
numberwithin=section]{dfn}
\declaretheorem[name=Definition,style=normal,numbered=no,qed=$\dashv$]{dfn*}
\declaretheorem[name=Definition,style=normal,numbered=no]{dfnnoqed*}
\declaretheorem[name=Theorem,style=slanted,sibling=dfn]{tm}
\declaretheorem[name=Theorem,style=slanted,numbered=no]{tm*}
\declaretheorem[name=Lemma,style=slanted,sibling=dfn]{lem}
\declaretheorem[name=Corollary,style=slanted,sibling=dfn]{cor}
\declaretheorem[name=Corollary,style=slanted,numbered=no]{cor*}
\declaretheorem[name=Remark,style=definition,sibling=dfn]{rem}
\declaretheorem[name=Fact,style=definition,sibling=dfn]{fact}
\declaretheoremstyle[headfont=\scshape]{claimstyle}
\declaretheorem[name=Claim,style=claimstyle]{clm}
\declaretheorem[name=Claim,style=claimstyle]{clmtwo}
\declaretheorem[name=Claim,style=claimstyle]{clmthree}
\declaretheorem[name=Claim,style=claimstyle,numbered=no]{clm*}
\declaretheoremstyle[headfont=\scshape]{casestyle}
\declaretheorem[name=Case,style=casestyle]{case}
\declaretheorem[name=Case,style=casestyle]{casetwo}
\declaretheorem[name=Subcase,style=casestyle,numberwithin=casetwo]{scasetwo}
\newcommand{\itemref}[2][]{
  \item[#2]
  \if\relax\detokenize{#1}\relax
  \else
    \phantomsection
    \def\@currentlabel{#2}
    \label{#1}
  \fi
}
\def\blfootnote{\gdef\@thefnmark{}\@footnotetext}
\title{Low level definability 
above large cardinals}
\author{Farmer Schlutzenberg\\
farmer.schlutzenberg@tuwien.ac.at\\
TU Wien}
\begin{document}
\maketitle
\blfootnote{Low level definability above large cardinals \copyright 2024--2026 by Farmer Schlutzenberg is licensed under CC BY 4.0. To view a copy of this license, visit \url{https://creativecommons.org/licenses/by/4.0/}.}
\blfootnote{This is the author accepted manuscript version of an article to appear in Notre Dame Journal of Formal Logic.}

\begin{abstract}
We study connections between definability in generalized descriptive set theory and  large cardinals, under ZFC. We show that if $\kappa$ is a limit of measurables then no $\Sigma_1(V_\kappa\cup\mathrm{OR})$ wellorder of a subset of $\mathcal{P}(\kappa)$ of length $\geq\kappa^+$ exists, answering a question of L\"ucke and M\"uller. However, consistently,  a Woodin cardinal exists  and for every uncountable cardinal $\kappa$ which is not a limit of measurables, a $\Sigma_1(\mathcal{H}_\kappa\cup\{\kappa\})$-good wellorder of $\mathcal{H}_{\kappa^+}$ exists. If $\kappa$ is a limit of measurables and $\cof(\kappa)>\omega$ then  no $\Sigma_1(V_\kappa\cup\mathrm{OR})$ almost disjoint family $\mathscr{F}\subseteq\mathcal{P}(\kappa)$ of cardinality $>\kappa$ exists. Consistently, $\Pi_1(\{\kappa\})$ mad families and maximal independent families
$\mathscr{F}\subseteq\mathcal{P}(\kappa)$   exist,   $\kappa$ is a limit of measurables, and  more.
If $\kappa$ is weakly compact and every $\Sigma_1(V_\kappa\cup\{\kappa\})$ subset of $\mathcal{P}(\kappa)$ of cardinality $>\kappa$ contains a perfect subset of the right kind, then there is an inner model with a weakly compact limit of measurables. We prove some related facts regarding $\Sigma_1(V_\lambda\cup\{V_\lambda\}\cup\OR)$ when $I_2(\lambda)$ holds. These depend on an analysis
of fixed points of linear iterations involving $I_2(\lambda)$-extenders.
\end{abstract}

\section{Introduction}\label{sec:intro}

In this paper we study the definability of particular kinds of  subsets of
generalized Baire space
${^\kappa}\kappa$, for uncountable cardinals $\kappa$, in the presence of large cardinals. The kinds of sets considered are:
\begin{enumerate}
\item[--] wellorders of subsets of $\pow(\kappa)$ of cardinality $>\kappa$, and in particular, $\Gamma$-good wellorders of $\pow(\kappa)$, for complexity classes $\Gamma$,
\item[--] almost disjoint families at $\kappa$ of cardinality $>\kappa$,
\item[--] subsets of $\pow(\kappa)$
of cardinality $>\kappa$
which have no perfect subset,
\item[--]  ultrafilters over $\kappa$
and the club filter at $\kappa$,
regarding which we just make some brief remarks.
\end{enumerate}
The work relates particularly to that in L\"ucke and M\"uller \cite{Sigma_1_def_at_higher_cardinals}, L\"ucke, Schindler and Schlicht \cite{luecke_schindler_schlicht},
and L\"ucke and Schlicht \cite{meas_cards_good_Sigma_1_wo};
in particular, we answer some of the questions posed in \cite{Sigma_1_def_at_higher_cardinals}.

Given infinite cardinals $\mu\leq\nu$,
say $\nu$ is \emph{$\mu$-closed}
iff $\all\alpha<\nu\ [\alpha^\mu<\nu]$.
Given infinite cardinals $\mu\leq\kappa$,
say $\kappa$ is \emph{$\mu$-steady}
iff  there is a cardinal $\nu\leq\kappa$ such that $\nu$ is $\mu$-closed, $\cof(\nu)\neq\mu$
and $\kappa\in\{\nu,\nu^+\}$.
So if $\kappa$ is $\mu$-steady
then $\kappa>\mu^+$. Note that if $\nu$ is $\mu$-closed and $\cof(\nu)<\mu$ then $\nu^+$ is $\mu$-steady but non-$\mu$-closed,
as $\nu^\mu\geq\nu^+$.

Regarding wellorders,
first, in \S\ref{sec:wo_above_one_meas}, we consider cardinals $\kappa$
such that there is a measurable cardinal $\mu<\kappa$
and $\kappa$ is $\mu$-steady.
Under these assumptions, in
Theorem~\ref{tm:above_one_measurable}, we establish some restrictions on wellordered subsets of $\pow(\kappa)$ of cardinality $>\kappa$ which are $\Sigma_1$-definable in certain parameters;
these results are minor refinements of results in \cite{Sigma_1_def_at_higher_cardinals} and
\cite{meas_cards_good_Sigma_1_wo}.
The main new fact here is that there is no $\Sigma_1(V_\mu\cup\{\kappa\})$ injection from $\kappa^+$ into $\pow(\kappa)$, even if $\cof(\kappa)=\om$; this was proved under the added assumption that $\cof(\kappa)>\om$ in \cite[Theorem 7.1]{Sigma_1_def_at_higher_cardinals}. In fact, we show that there is no $\Sigma_1(V_\mu\cup\{\kappa\})$
set $f\sub\kappa^+\cross\pow(\kappa)$ such that for some club $C\sub\kappa^+$, $f\rest C:C\to\pow(\kappa)$ is an injective function.

In \S\ref{sec:wo_above_infinite_meas} we  consider cardinals $\kappa$ which are limits of measurables.
Here we answer \cite[Questions 10.3, 10.4]{Sigma_1_def_at_higher_cardinals},
by showing that in this context,
there is no set $D\sub\pow(\kappa)$
and wellorder $<^*$ of $D$
such that $D$ has cardinality $>\kappa$ and
 $D,<^*$ are $\Sigma_1(\her_\kappa\cup\{\kappa\})$-definable.
 (L\"ucke and M\"uller already established some results in this direction in \cite[Theorem 1.4, Corollary 7.4]{Sigma_1_def_at_higher_cardinals}.) We also  strengthen their
 results \cite[Theorem 1.1, Corollary 7.4(ii)]{Sigma_1_def_at_higher_cardinals},
 as explained in detail in \S\ref{sec:wo_above_infinite_meas}.

 In \S\ref{sec:above_rank-to-rank}, we prove that if $j:V\to M$
 is an $I_2$-embedding and $\lambda$ is the limit its critical sequence, and $D\sub\pow(\lambda)$
 is a set of cardinality $>\lambda$
 which is
  $\Sigma_1(P)$ where $P=V_\lambda\cup\{V_\lambda\}\cup\OR$,
  then $D$ has a simply perfect subset
  (Definition \ref{dfn:simply_perfect}),
  and there is no $\Sigma_1(P)$ wellorder of $D$.
  We also show that there are at most
  $\lambda$-many sets $A\sub V_\lambda$
  such that $\{A\}$ is $\Sigma_1(P)$.
  The result on simply perfect sets relates to some in \cite{descriptive_properties_I2-embeddings};
  see the discussion just prior to the statement of Theorem \ref{tm:3.11}.

  The results in \S\ref{sec:wellorders_above_measurable_cardinals}
  rely on various lemmas
  which establish the existence of various fixed points of iteration maps,
  both for linear iterations with measures,
  and linear iterations of extenders witnessing $I_2$.

 In \S\ref{sec:gdst_in_L[U]},
 we consider the situation in the model $L[U]$ for one measurable cardinal $\mu$. We show that in $L[U]$, for every uncountable cardinal $\kappa$:
 \begin{enumerate}
  \item there is a stationary-co-stationary set $d\sub\kappa^+$
  and an injective function $f:d\to\pow(\kappa)$ such that $d,f$ are both $\Sigma_1(\{\kappa\})$, and
  \item\label{item:no_perf_sset} there is a $\Sigma_1(\{\kappa\})$ set $D\sub\pow(\kappa)$ of cardinality $>\kappa$
  such that there is no perfect embedding $\iota:{^{\cof(\kappa)}}\kappa\to{^\kappa}\kappa$
  with $\rg(\iota)\sub D$.
  \end{enumerate}
  (In case $\kappa\leq\mu^+$
  or $\kappa$ is non-$\mu$-steady,
  these things already follow easily from the results in \cite{luecke_schindler_schlicht}.
  Part~\ref{item:no_perf_sset} relates to \cite[Theorem 1.2]{Sigma_1_def_at_higher_cardinals},
  by which it was already known that if $\kappa$ is singular
  then there must be some such $D$ which is $\Sigma_1(\her_\kappa\cup\{\kappa\})$. But that result did not deal with the case that $\kappa$ is regular.)

 In Theorem~\ref{tm:good_wo_1-small}, we establish the existence, in $M_1$
 (the minimal proper class mouse with a Woodin cardinal),
 of a $\Sigma_1(\her^{M_1}_\kappa\cup\{\kappa\})$-good wellorder of $\her^{M_1}_{\kappa^{+M_1}}$, for all uncountable cardinals $\kappa$
 which are not limits of measurables in $M_1$.

In \S\ref{sec:wc}
we answer \cite[Question 10.1]{Sigma_1_def_at_higher_cardinals},
which asked whether, if $\kappa$ is weakly compact and $\Sigma_1(\her_\kappa\cup\{\kappa\})$ has a certain perfect embedding property,
there must be a (proper class)
inner model satisfying ``there is a weakly compact limit of measurable cardinals''.  In Theorem~\ref{tm:wc} we show the answer is ``yes'' (the ``perfect embedding property'' is clarified in the theorem's statement).

  In \S\ref{sec:adf},
  we consider almost disjoint families. Let $\kappa$ be an infinite cardinal.
Recall that an \emph{almost disjoint family at $\kappa$} is a set $\mathscr{F}\sub\pow(\kappa)$
such that for all $A\in\mathscr{F}$,
$A$ is unbounded in $\kappa$,
and for all $A,B\in\mathscr{F}$
with $A\neq B$, $A\cap B$ is bounded in $\kappa$. We usually drop the phrase ``at $\kappa$'', as $\kappa$ will be clear from context. An almost disjoint family $\mathscr{F}$ is called \emph{maximal} if for all unbounded $C\sub\kappa$, there is $A\in\mathscr{F}$ such that $A\cap C$ is unbounded in $\kappa$.
A \emph{mad family} just means a maximal almost disjoint family.
It was shown by Mathias in \cite[Corollary 4.7]{happy_families}
that there is no  $\bfSigma^1_1$
infinite mad family at $\om$,
and shown by Miller in \cite[Theorem 8.23]{miller}
that if $V=L$ then there is a $\Pi^1_1$ infinite mad family at $\om$.
In this section we establish variants of these results, considering almost disjoint families at cardinals $\kappa>\om$.
 L\"ucke and M\"uller already
 proved an analogue \cite[Theorem 1.3]{Sigma_1_def_at_higher_cardinals} of Mathias' result, showing that if $\kappa$ is an iterable cardinal which is a limit of measurables then there is no $\Sigma_1(\her_\kappa\cup\{\kappa\})$
almost disjoint family $\mathscr{F}$ at $\kappa$ such that $\mathscr{F}$ has cardinality $>\kappa$,
and hence no $\Sigma_1(\her_\kappa\cup\{\kappa\})$ mad family $\mathscr{F}$ at $\kappa$
such that $\mathscr{F}$ has cardinality $\geq\kappa$.
We extend this, showing in Theorem~\ref{tm:kappa_limit_of_measurables_implies_no_Sigma_1_mad_family} that if $\kappa$ a limit of measurable cardinals then:
\begin{enumerate}
 \item[--] there is no $\Sigma_1(\her_\kappa\cup\OR)$ \emph{mad} family $\mathscr{F}\sub\pow(\kappa)$ of cardinality $\geq\kappa$, and
 \item[--] if $\cof(\kappa)>\om$
 then there is no $\Sigma_1(\her_\kappa\cup\OR)$ \emph{almost disjoint} family $\mathscr{F}\sub\pow(\kappa)$ of cardinality $>\kappa$.
\end{enumerate}
In the other direction,
we establish an analogue of Miller's result, and in so doing,
partially address \cite[Question 10.5(ii)]{Sigma_1_def_at_higher_cardinals},
which asked whether sufficiently strong large cardinal properties of $\kappa$ imply that there is no $\Pi_1(\her_\kappa\cup\{\kappa\})$ almost disjoint family of cardinality $>\kappa$.
We  show in Theorem~\ref{tm:regular_kappa_Pi_1_kappa_mad_families} that it is in fact  consistent relative to large cardinals that $\kappa$ is a regular cardinal, with large cardinal properties up to a Woodin limit of Woodin cardinals, and there is a $\Pi_1(\{\kappa\})$ \emph{mad} family of cardinality $>\kappa$.

In \S\ref{sec:indep_families}
we establish in Theorem~\ref{tm:regular_kappa_Pi_1_kappa_mi_families} an analogue of Theorem~\ref{tm:regular_kappa_Pi_1_kappa_mad_families}, giving the consistency relative to large cardinals of cardinals $\kappa$ with large cardinal properties,
together with the existence of $\Pi_1(\{\kappa\})$
maximal independent families  $\sub\pow(\kappa)$ of cardinality $>\kappa$.

In \S\ref{sec:ultrafilters}
we make a simple observation
on the definability of ultrafilters over $\kappa$ which are limits of measurables,
and one
on the definability of the club filter on regular $\kappa$ such that $\kappa>\mu$ for some measurable $\mu$.

Finally in \S\ref{sec:global}
we consider a somewhat different theme. We adapt a result of Schlicht \cite[Theorem 2.19]{schlicht_perfect_set_property}, proving the relative consistency
of the theory ZFC + ``$\kappa$ is $\kappa^+$-supercompact'' + ``for every $X\sub\pow(\kappa)$ with $X\in\HOD_{V_{\kappa+1}}$
and $X$ of cardinality $>\kappa$,
we have that $X$ has a perfect subset and $X$ is not a wellorder. (The methods involved in this adaptation are some standard forcing techniques.)

In \S\ref{sec:questions} we collect some questions together.

\subsection{Notation}\label{subsec:notation}
 ZFC is the background theory for the paper.

For classes $X,Y$, we say $Y$ \emph{is $\Sigma_n(X)$} (or $Y$ \emph{is $\Sigma_n(X)$-definable}) iff there is a $\Sigma_n$ formula $\varphi$
and elements $p_0,\ldots,p_{n-1}\in X$
such that $Y=\{y\bigm|\varphi(p_0,\ldots,p_{n-1},y)\}$. Likewise for $\Pi_n(X)$. And $Y$ is $\Delta_n(X)$ iff it is both $\Sigma_n(X)$ and $\Pi_n(X)$.

For an ordinal $\eta$
let $[\eta]^\om_{\uparrow}$ denote the set of all subsets of $\eta$ of ordertype $\om$.

Recall that \emph{$\mu$-closed}
and \emph{$\mu$-steady}
were defined at the beginning of \S\ref{sec:intro}.

 By a \emph{measure}, we mean a countably complete non-principal ultrafilter,
 and by a \emph{normal measure},
 we mean a normal $\kappa$-complete  non-principal ultrafilter on an uncountable cardinal $\kappa$.

 For an uncountable cardinal $\kappa$, $\her_\kappa$
 denotes the set of all sets hereditarily of cardinality $<\kappa$.

The universe of a structure $M$
is denoted $\univ{M}$.

Recall from \cite{outline} that a premouse $M$
has form $M=(\J_\alpha[\es],\es,F)$ where $\alpha$ is a limit ordinal or $\alpha=\OR$, $\es$ is a sequence of (partial) extenders with certain properties, and either $F=\emptyset$ or $F$ is an extender over $\J_\alpha[\es]$ with certain properties. We write $\es^M=\es$,
$F^M=F$ and $\es_+^M=\es^M\conc\left<F^M\right>$. We say $F$ is the \emph{active extender} of $M$,
and say $M$ is \emph{active} if $F\neq\emptyset$. For limit ordinals $\beta\leq\OR^M$,
we write $M|\beta$ for the initial segment of $M$ of ordinal height $\beta$, including the extender indexed at $\beta$, if there is one; that is, $M|\beta=(\J_\beta[\es^M],\es^M\rest\beta,E)$
where $E=\es_+^M(\beta)$. We write $M||\beta=(M|\beta)^{\passive}$ for $(\J_\beta[\es^M],\es^M\rest\beta,\emptyset)$.
If $\beta\in\OR^M$ is an $M$-cardinal
then $\es^M(\beta)=\emptyset$. We write $<^M$ for the standard order of constructibility of $M$
(this is a wellorder of $\univ{M}$,
and ${<^{M||\beta}}={<^{M|\beta}}$ is an initial segment of $<^M$, for all $\beta\leq\OR^M$).
Actually we officially use the definition of \emph{premouse} of \cite[\S1.1]{V=HODX_pub}, and in particular,
allow extenders of superstrong type to appear in $\es_+^M$ (though these will only be relevant for some of the work).

We also consider $L[U]$ (where $U$ is a filter over some $\kappa$ such that $L[U]\sats$ ``$U$ is a normal measure'') to be a premouse in the above sense, so $\es^{L[U]}$
includes $U$ as its ultimate element, but also includes  many partial measures prior to this (both on $\kappa$
and unboundedly many ordinals $<\kappa$).

\section{Linear Iterations of Measures}
\label{sec:linear_iterations_of_measures}

In this section we collect some facts regarding linear iterations of measures which we will need.
The material in \S\ref{sec:iterations_single_measure} is standard, and is very similar to some material in \cite{reinhardt_iterates}.
\subsection{Iterations of a Single Measure}\label{sec:iterations_single_measure}
\begin{dfn}\label{dfn:linear_iteration}
Let $U$ be a measure.
Then $\Tt_U=\left<M_\alpha,U_\alpha\right>_{\alpha\in\OR}$ denotes the length $\OR$ iteration of $V$ via $U$ and its images. That is, $M_0=V$, $U_0=U$, $M_{\alpha+1}=\Ult(M_\alpha,U_\alpha)$, and letting $i_{\alpha\beta}:M_\alpha\to M_\beta$ be the iteration map, \begin{equation*}\label{eqn:label_for_eqn_1} U_{\alpha+1}=i_{0,\alpha+1}(U)=i_{\alpha,\alpha+1}(U_\alpha),\end{equation*}
and for limit $\lambda$, $M_\lambda$
is the direct limit of the earlier $M_\alpha$ under these iteration maps. We also write $M^{\Tt_U}_\alpha=M_\alpha$, $i^{\Tt_U}_{\alpha\beta}=i_{\alpha\beta}$, etc.
\end{dfn}

In the following lemma, recall that a cardinal $\kappa$ is \emph{$\mu$-closed}
iff $\alpha^\mu<\kappa$ for all $\alpha<\kappa$,
and \emph{$\mu$-steady}
iff there is a cardinal $\nu\leq\kappa$ such that $\nu$ is $\mu$-closed, $\cof(\nu)\neq\mu$,
and $\kappa\in\{\nu,\nu^+\}$.
\begin{lem}\label{lem:many_U-it_fixed_points_above_meas}
 Assume ZFC and let $\mu<\kappa$
 be cardinals such that $\mu$ is measurable and $\kappa$ is $\mu$-closed or $\mu$-steady.  Let $U$ be a $\mu$-complete measure on $\mu$. Then:
 \begin{enumerate}\item\label{item:it_maps``kappa_sub_kappa} $i^{\Tt_U}_{0\lambda}``\kappa\sub\kappa$ for each $\lambda<\kappa$.\item\label{item:nicely_stable} There are unboundedly many  $\mu'<\kappa$
 such that $i^{\Tt_U}_{0\mu'}(\mu)=\mu'$
 and $i^{\Tt_U}_{0\lambda}(\mu')=\mu'$ for all $\lambda<\mu'$.
 \end{enumerate}
\end{lem}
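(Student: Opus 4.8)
The plan is to combine the usual internal-approximation description of linear iterations with elementary cardinal arithmetic under the $\mu$-closure/$\mu$-steadiness hypotheses. Throughout write $\Tt=\Tt_U$, $M_\alpha=M^\Tt_\alpha$, $i_{\alpha\beta}=i^\Tt_{\alpha\beta}$ and $\kappa_\alpha=i_{0\alpha}(\mu)=\crit(i_{\alpha,\alpha+1})$. We use the standard facts about $\Tt$: $\langle\kappa_\alpha\rangle$ is strictly increasing and continuous at limits with $\kappa_\alpha\geq\alpha$; each $M_\alpha\subseteq V$; and every element of $M_\alpha$ has the form $i_{0\alpha}(f)(s_{\xi_1},\dots,s_{\xi_n})$ with $f\in V$, $\dom(f)={}^n\mu$, $\xi_1<\dots<\xi_n<\alpha$, and $s_\xi<\kappa_{\xi+1}$ the seed introduced at stage $\xi$ (so $s_\xi<\kappa_\lambda$ whenever $\xi<\lambda$). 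Consequently every ordinal $<i_{0\lambda}(\xi)$ has the form $i_{0\lambda}(f)(\vec s)$ with $f\colon{}^n\mu\to\xi$ in $V$ and $\vec s$ a seed-tuple, so counting pairs gives
\[ |i_{0\lambda}(\xi)|\ \le\ \max(|\xi|,2)^\mu\cdot\max(|\lambda|,\aleph_0)\qquad(\ast). \]
If $\kappa$ is $\mu$-closed, then for $\xi,\lambda<\kappa$ both factors of $(\ast)$ are $<\kappa$ (for the first, $\mu$-closure gives $\alpha^\mu<\kappa$ for $\alpha<\kappa$, and $2^\mu=\mu^\mu<\kappa$), so $i_{0\lambda}(\xi)<\kappa$; this proves (1). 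In particular $\kappa_\alpha<\kappa$ for all $\alpha<\kappa$, which we use freely below.

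Now suppose $\kappa$ is $\mu$-steady but not $\mu$-closed. By the definitions $\kappa=\nu^+$ with $\nu$ $\mu$-closed and $\cof(\nu)\neq\mu$; since $\nu^+$ is not $\mu$-closed and $\nu$ is, the witnessing ordinal must be $\nu$ itself, so $\nu^\mu\geq\nu^+$, and this forces $\cof(\nu)<\mu$ (if $\cof(\nu)>\mu$ then $\nu^\mu=\sup_{\alpha<\nu}\alpha^\mu=\nu$). Also $\nu$ is a limit cardinal (a successor cardinal would have cofinality $\nu>\mu$). Fix cardinals $\langle\nu_i\rangle_{i<\cof(\nu)}$ increasing and cofinal in $\nu$. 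For $\lambda<\kappa$, $i_{0\lambda}$ fixes $\cof(\nu)<\mu=\crit(i_{0\lambda})$, so by elementarity $i_{0\lambda}(\nu)=\sup_{i<\cof(\nu)}i_{0\lambda}(\nu_i)$. Each $\nu_i<\nu$, so by $(\ast)$ and $\mu$-closure of $\nu$ we get $|i_{0\lambda}(\nu_i)|\le\nu_i^\mu\cdot|\lambda|\le\nu$, hence $i_{0\lambda}(\nu_i)<\nu^+$; as $\nu^+$ is regular and $\cof(\nu)<\mu<\nu^+$, it follows that $i_{0\lambda}(\nu)<\nu^+=\kappa$. Finally, since $M_\lambda\subseteq V$ the cardinal $\nu^+$ remains a cardinal of $M_\lambda$, so the cardinal-successor of $i_{0\lambda}(\nu)$ in $M_\lambda$ is $\le\nu^+$; but that successor is $i_{0\lambda}(\nu^+)=i_{0\lambda}(\kappa)$, so $i_{0\lambda}(\kappa)\le\kappa$, and therefore $i_{0\lambda}(\xi)<i_{0\lambda}(\kappa)\le\kappa$ for every $\xi<\kappa$. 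This completes (1).

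For (2), let $C_1$ be the class of limit ordinals $\mu'<\kappa$ closed under $\alpha\mapsto\kappa_\alpha$ and under $(\lambda,\xi)\mapsto i_{0\lambda}(\xi)$ (i.e. $\kappa_\alpha<\mu'$ and $i_{0\lambda}(\xi)<\mu'$ whenever $\alpha,\lambda,\xi<\mu'$). I claim every $\mu'\in C_1$ with $\cof(\mu')>\mu$ witnesses (2). Indeed, closure under the critical sequence plus continuity gives $i_{0\mu'}(\mu)=\kappa_{\mu'}=\sup_{\alpha<\mu'}\kappa_\alpha=\mu'$. And for $\lambda<\mu'$ we have $i_{0\lambda}(\mu')\geq\mu'$ trivially, while any ordinal $<i_{0\lambda}(\mu')$ has the form $i_{0\lambda}(f)(\vec s)$ with $f\colon{}^n\mu\to\mu'$ in $V$ and $\vec s$ a tuple of seeds of $i_{0\lambda}$, all of which lie below $\kappa_\lambda<\mu'$; since $\cof(\mu')>\mu=|{}^n\mu|$, the range of $f$ is bounded, say $\rg(f)\subseteq\delta<\mu'$, so $i_{0\lambda}(f)(\vec s)<i_{0\lambda}(\delta)<\mu'$ (using $\delta,\lambda<\mu'$ and $\mu'\in C_1$). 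Hence $i_{0\lambda}(\mu')\le\mu'$, as required.

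It remains to see that $\{\mu'\in C_1:\cof(\mu')>\mu\}$ is unbounded in $\kappa$. Given $\gamma<\kappa$, build by recursion an increasing continuous sequence $\langle\gamma_\eta\rangle_{\eta<\mu^+}$ of ordinals $<\kappa$ with $\gamma_0=\gamma+1$ and with $\gamma_{\eta+1}$ above $\gamma_\eta$, above $\kappa_\alpha$ for all $\alpha\le\gamma_\eta$, and above $i_{0\lambda}(\xi)$ for all $\lambda,\xi\le\gamma_\eta$. Each such sup is of $\le|\gamma_\eta|<\kappa$ many ordinals that are $<\kappa$ by (1); if $\kappa$ is regular this is $<\kappa$, and if $\kappa$ is singular then $\kappa$ is $\mu$-closed, $(\ast)$ bounds all of them below $\big(\max(|\gamma_\eta|,2)^\mu\big)^+<\kappa$, and — using idempotence of $\tau\mapsto\tau^\mu$ — $|\gamma_\eta|$ stays $\le\sigma:=\max(|\gamma_0|,2)^\mu<\kappa$ throughout, so $\mu':=\sup_{\eta<\mu^+}\gamma_\eta$ has $|\mu'|\le\sigma<\kappa$; either way all the $\gamma_\eta$ and $\mu'$ are $<\kappa$. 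By construction $\mu'\in C_1$, $\cof(\mu')=\mu^+>\mu$, and $\mu'>\gamma$. Thus there are unboundedly many witnesses to (2).

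The crux is the $\mu$-steady-but-not-$\mu$-closed case of (1) (second paragraph above): there $(\ast)$ alone is useless for $\xi$ with $|\xi|=\nu$, since $\nu^\mu\geq\nu^+=\kappa$, and one must instead exploit regularity of $\nu^+$ together with $\cof(\nu)<\mu$ and the $\mu$-closure of $\nu$ to pin $i_{0\lambda}(\nu)$ below $\nu^+$, and then $i_{0\lambda}(\nu^+)$ as well via $M_\lambda\subseteq V$. A secondary technical point, handled in the last paragraph, is that the ``close off to a fixed point'' construction must run through chains of length $\mu^+$ rather than $\omega$, both so that the intermediate stages (and their supremum) stay strictly below $\kappa$ and so that the resulting $\mu'$ has cofinality $>\mu$.
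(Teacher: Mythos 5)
Your proof is correct. For part (\ref{item:it_maps``kappa_sub_kappa}) it follows essentially the same route as the paper: count the representations $i^{\Tt_U}_{0\lambda}(f)(\vec s)$ in the $\mu$-closed case, and in the $\mu$-steady non-$\mu$-closed case exploit $\cof(\nu)<\mu$ to pin $i^{\Tt_U}_{0\lambda}(\nu)$ below $\nu^+$ (the paper does this by counting bounded functions $f:[\mu]^{<\om}\to\nu$, you by splitting $\nu$ along a short cofinal sequence and using regularity of $\nu^+$ --- the same idea); your extra step $i^{\Tt_U}_{0\lambda}(\kappa)\le\kappa$ via absoluteness of cardinals to $M_\lambda$ is a clean way to finish. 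For part (\ref{item:nicely_stable}) you take a genuinely different, and more heavily verified, route: the paper builds an $\om$-chain $\mu_{n+1}=i^{\Tt_U}_{0\mu_n}(\mu)$ and takes its supremum (so the resulting $\mu'$ has cofinality $\om$, and checking $i^{\Tt_U}_{0\lambda}(\mu')=\mu'$ rests on continuity of $i^{\Tt_U}_{0\lambda}$ at $\mu'$ together with the identification of $i^{\Tt_U}_{0\lambda}(\kappa_\beta)$ with a later critical point), whereas you run a closure chain of length $\mu^+$ under $\alpha\mapsto\kappa_\alpha$ and $(\lambda,\xi)\mapsto i^{\Tt_U}_{0\lambda}(\xi)$, so that $\cof(\mu')=\mu^+>\mu$ forces every representing function $f:{}^n\mu\to\mu'$ to be bounded, making $i^{\Tt_U}_{0\lambda}(\mu')\le\mu'$ immediate. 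Your version costs a slightly more delicate cardinality estimate to keep the $\mu^+$-chain below $\kappa$ (correctly split into the regular and singular-hence-$\mu$-closed cases), but buys a fully self-contained verification of the fixed-point property; the paper's version is shorter but leaves that verification as ``note that $\mu'$ works.'' Both are fine.
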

\begin{proof}
Part~\ref{item:it_maps``kappa_sub_kappa}:
 Let $\alpha,\lambda<\kappa$.
 If $\kappa$ is $\mu$-closed then
 \begin{equation*}\label{eqn:label_for_eqn_2}\card(i^{\Tt_U}_{0\lambda}(\alpha))\leq\card(\alpha)^\mu\cdot(\card(\lambda+1))<\kappa.\end{equation*}
 Suppose  $\kappa$ is non-$\mu$-closed.
 Then $\kappa$ is $\mu$-steady,
 and so $\kappa=\nu^+$
 where $\nu$ is $\mu$-closed
 and $\cof(\nu)\neq\mu$
 but $\nu^\mu\geq\kappa$.
 Note then that $\cof(\nu)<\mu$,
 but then $\card(i^{\Tt_U}_{0\lambda}(\nu))\leq\card(\nu\cdot(\lambda+1))=\nu$,
 since to form $i^{\Tt_U}_{0\lambda}(\nu)$, it suffices to consider only functions $f:[\mu]^{<\om}\to\nu$ which are bounded in $\nu$, of which there are only $\nu$-many.
 So $i^{\Tt_U}_{0\lambda}(\nu)<\nu^+=\kappa$.
 But then $i^{\Tt_U}_{0\lambda}(\alpha)<\kappa$ also.

 Part~\ref{item:nicely_stable}:
Fix $\lambda<\kappa$;
 we want to find $\mu'\in[\lambda,\kappa)$ with the right properties.
 Let $\mu_0=i^{\Tt_U}_{0,\lambda+1}(\mu)$ (so $\max(\mu,\lambda+1)<\mu_0$),
 and given $\mu_n$ where $n<\om$, let $\mu_{n+1}=i^{\Tt_U}_{0\mu_n}(\mu_n)$.
 Then $\mu_n<\mu_{n+1}<\kappa$, by part~\ref{item:it_maps``kappa_sub_kappa} and since, for example, $\max(\mu,\lambda+1)<\mu_0$, so
 \begin{equation*}\label{eqn:label_for_eqn_3} \mu_1=i^{\Tt_U}_{0\mu_0}(\mu_0) >
i^{\Tt_U}_{0,\lambda+1}(\mu)=\mu_0, \end{equation*}
and similarly for larger $n$.
Let $\mu'=\sup_{n<\om}\mu_n$.
 Some straightforward cardinal arithmetic shows that $\mu'<\kappa$. (If $\cof(\kappa)=\om$ then $\kappa$ is a $\mu$-closed limit cardinal, and note that for all $n$, we have $\card(\mu_n)\leq\theta$
 where $\theta=\card(\mu^\mu\cdot(\lambda+1)^\mu)<\kappa$, so $\mu'<\theta^+$.)
 Note that $\mu'$ works (certainly $i^{\Tt_U}_{0\beta}(\mu)\geq\beta$ for all $\beta$, and the other requirements follow from the construction).
\end{proof}
\begin{dfn}\label{dfn:2.3}
 For an ordinal $\alpha$,
 the \emph{eventual ordertype}
 $\eot(\alpha)$ of $\alpha$
 is the least ordinal $\eta$
 such that for some $\beta<\alpha$,
 we have $\alpha=\beta+\eta$.
\end{dfn}

\begin{dfn}\label{dfn:2.4}
 Let $U$ be a measure. Given a set $X$ and ordinals $\alpha\leq\beta$
 with $X\in M^{\Tt_U}_\gamma$ for all $\gamma<\beta$,
 we say that $X$ is \emph{$U$-$[\alpha,\beta)$-stable}
 iff
  $i^{\Tt_U}_{\alpha\gamma}(X)=X$ for all $\gamma\in[\alpha,\beta)$. If $U$ is determined by context, we just say \emph{$[\alpha,\beta)$-stable},
 and often we will have a cardinal $\kappa$ also fixed, and then \emph{\tu{(}$U$\tu{)}-$\alpha$-stable} means \emph{\tu{(}$U$\tu{)}-$[\alpha,\kappa)$-stable}.
\end{dfn}
The following is well-known:
\begin{fact}\label{fact:every_ord_ev_stable}
Let $U$ be a measure.
 Let $\eta$ be a limit ordinal and $\xi$ any ordinal.
 Then there is $\alpha<\eta$ such that $\xi$ is $U$-$[\alpha,\eta)$-stable.
 (Otherwise $M^{\Tt_U}_\eta$ is illfounded.)
\end{fact}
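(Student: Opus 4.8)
The plan is to prove this by contradiction, exactly along the lines the parenthetical hint indicates: if no such $\alpha$ exists, then $M^{\Tt_U}_\eta$ carries an infinite descending sequence of ordinals. First I would recall the standard facts about the linear iteration $\Tt_U$ that the argument uses: every model $M^{\Tt_U}_\gamma$ is transitive, and hence each iteration map $i^{\Tt_U}_{\gamma\delta}$ (for $\gamma\leq\delta$) is injective, strictly order-preserving on ordinals, and satisfies $i^{\Tt_U}_{\gamma\delta}(\zeta)\geq\zeta$ for every ordinal $\zeta$; moreover $i^{\Tt_U}_{\gamma\eta}=i^{\Tt_U}_{\delta\eta}\circ i^{\Tt_U}_{\gamma\delta}$ whenever $\gamma\leq\delta\leq\eta$. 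The transitivity of all the $M^{\Tt_U}_\gamma$, i.e.\ wellfoundedness of linear iterates of $V$ by a measure, is precisely the ``standard fact'' being invoked.

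Now suppose towards a contradiction that $\xi$ is not $U$-$[\alpha,\eta)$-stable for any $\alpha<\eta$. Then for each $\alpha<\eta$ there is $\gamma\in[\alpha,\eta)$ with $i^{\Tt_U}_{\alpha\gamma}(\xi)\neq\xi$; since $i^{\Tt_U}_{\alpha\alpha}=\id$ forces $\gamma>\alpha$, and since iteration maps do not decrease ordinals, any such $\gamma$ has $i^{\Tt_U}_{\alpha\gamma}(\xi)>\xi$. I would iterate this to build $0=\alpha_0<\alpha_1<\alpha_2<\cdots<\eta$ with $i^{\Tt_U}_{\alpha_n\alpha_{n+1}}(\xi)>\xi$ for all $n$, and then set $x_n=i^{\Tt_U}_{\alpha_n\eta}(\xi)$, an ordinal of $M^{\Tt_U}_\eta$. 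By commutativity, $x_n=i^{\Tt_U}_{\alpha_{n+1}\eta}\bigl(i^{\Tt_U}_{\alpha_n\alpha_{n+1}}(\xi)\bigr)>i^{\Tt_U}_{\alpha_{n+1}\eta}(\xi)=x_{n+1}$, the strict inequality coming from $i^{\Tt_U}_{\alpha_{n+1}\eta}$ being strictly order-preserving. Thus $x_0>x_1>x_2>\cdots$, contradicting wellfoundedness of $M^{\Tt_U}_\eta$. Note that the sequence $\langle\alpha_n\rangle$ need not be cofinal in $\eta$, so no assumption on $\cof(\eta)$ is needed.

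Equivalently, and a bit more slickly, one can avoid the contradiction altogether: the map $\gamma\mapsto i^{\Tt_U}_{\gamma\eta}(\xi)$ is non-increasing from $\eta$ into the ordinals of $M^{\Tt_U}_\eta$ (for $\gamma\leq\delta<\eta$, $i^{\Tt_U}_{\gamma\eta}(\xi)=i^{\Tt_U}_{\delta\eta}(i^{\Tt_U}_{\gamma\delta}(\xi))\geq i^{\Tt_U}_{\delta\eta}(\xi)$), hence eventually constant, equal to its minimum value from some $\alpha<\eta$ on; then for $\gamma\in[\alpha,\eta)$ we have $i^{\Tt_U}_{\gamma\eta}(i^{\Tt_U}_{\alpha\gamma}(\xi))=i^{\Tt_U}_{\gamma\eta}(\xi)$, and injectivity of $i^{\Tt_U}_{\gamma\eta}$ yields $i^{\Tt_U}_{\alpha\gamma}(\xi)=\xi$. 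There is no genuine obstacle in either version; the only points meriting any care are that one is entitled to assume $M^{\Tt_U}_\eta$ is wellfounded, and that this descending-sequence (equivalently, non-increasing-function) formulation disposes of an arbitrary limit ordinal $\eta$ uniformly, with no separate treatment of $\cof(\eta)=\om$ versus $\cof(\eta)>\om$.
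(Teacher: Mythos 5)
Your proof is correct and is exactly the argument the paper's parenthetical hint intends: if $\xi$ were nowhere eventually stable below $\eta$, the images $i^{\Tt_U}_{\alpha_n\eta}(\xi)$ would form a strictly descending sequence of ordinals of $M^{\Tt_U}_\eta$, contradicting the wellfoundedness of linear iterates of $V$ by a countably complete measure. The "non-increasing, hence eventually constant" reformulation is a clean packaging of the same idea; nothing further is needed.
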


\begin{dfn}\label{dfn:*_eta}
 Let $U$ be a  measure
 and $\eta$  a limit ordinal.
 Then $*_{\eta,U}:\OR\to\OR$ denotes the map
 $\alpha\mapsto\alpha^*$ where for ordinals $\alpha$ we define
 $\alpha^*=i^{\Tt_U}_{\beta\eta}(\alpha)$
 for any/all $\beta<\eta$
 such that $\alpha$ is $[\beta,\eta)$-stable. We just write $*_{\eta}$
  if $U$ is clear from context,
  and just write $\alpha^*$ for $*_\eta(\alpha)$ if $\eta$ is also clear.
\end{dfn}

A straightforward calculation shows the following fact:
\begin{lem}\label{lem:*_def}
Let $U,\eta$ be as in Definition~\ref{dfn:*_eta}. Let $\tau=\eot(\eta)$
and $\tau'=*_\eta(\tau)$.
Then for all $\alpha\in\OR$, we have
\begin{equation*}\label{eqn:label_for_eqn_4}
 *_\eta(\alpha)=i^{\Tt_U}_{\eta,\eta+\tau'}(\alpha)=i^{\Tt_{U_\eta}}_{0\tau'}(\alpha), \end{equation*}
 and hence, $*_\eta$ is definable over $M^{\Tt_U}_\eta$ from the parameters $U_\eta$ and $\tau'$.
\end{lem}

\begin{lem}\label{lem:charac_sets_in_M_eta_limit_eta}
Let $U$ be a measure.
 Let $\eta$
  be a limit ordinal.
  Let $A\sub\OR$
  be a set. Then $A\in M^{\Tt_U}_\eta$
  iff $A\in\bigcap_{\alpha<\eta}M^{\Tt_U}_\alpha$
  and there is $\beta<\eta$
  such that $A$ is $U$-$[\beta,\eta)$-stable.
  \end{lem}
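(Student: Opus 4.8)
The plan is to prove both implications, treating the easy ``membership in all earlier models'' part as bookkeeping, and isolating the stability of the \emph{set} $A$ (as opposed to its individual ordinals) as the real content. Throughout I abbreviate $M_\gamma=M^{\Tt_U}_\gamma$ and $i_{\gamma\delta}=i^{\Tt_U}_{\gamma\delta}$, and I write $\tau=\eot(\eta)$ and $\tau'=*_\eta(\tau)$.

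For the direction ($\Leftarrow$), suppose $A\in\bigcap_{\gamma<\eta}M_\gamma$ is $[\beta,\eta)$-stable, and set $B=i_{\beta\eta}(A)\in M_\eta$. The key computation is that for every ordinal $\xi$ one has $\xi\in A$ iff $*_\eta(\xi)\in B$: given $\xi$, use Fact \ref{fact:every_ord_ev_stable} to pick $\gamma\in[\beta,\eta)$ with $\xi$ being $[\gamma,\eta)$-stable; then $i_{\gamma\eta}(\xi)=*_\eta(\xi)$ by definition of $*_\eta$, while $i_{\gamma\eta}(A)=i_{\gamma\eta}(i_{\beta\gamma}(A))=i_{\beta\eta}(A)=B$ by stability of $A$, and elementarity of $i_{\gamma\eta}$ yields the equivalence. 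Hence $A=\{\xi:*_\eta(\xi)\in B\}$. By Lemma \ref{lem:*_def}, $*_\eta$ is definable over $M_\eta$ from $U_\eta,\tau'$, so this exhibits $A$ as a subclass of $M_\eta$ definable from $B,U_\eta,\tau'$; since $A$ is a set of ordinals it is bounded, and therefore $A\in M_\eta$ by Separation inside $M_\eta$.

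For ($\Rightarrow$), I first note $A\in M_\gamma$ for all $\gamma<\eta$: each successor step passes to an inner model, $M_{\gamma+1}=\Ult(M_\gamma,U_\gamma)\sub M_\gamma$, and each limit $M_\delta$ is (externally) the iterate of an earlier $M_\gamma$ by $U_\gamma$ computed \emph{inside} $M_\gamma$, hence $M_\delta\sub M_\gamma$; so the $M_\gamma$ decrease and $M_\eta\sub M_\gamma$, giving the first conjunct. For stability, fix $\beta<\eta$ large enough that: (i) $\eta=\beta+\tau$ — automatic once $\beta\geq\beta_*$, where $\eta=\beta_*+\tau$, because $\tau=\eot(\eta)$ is additively indecomposable, so $\alpha+\tau=\tau$ for all $\alpha<\tau$; (ii) $\tau$ is $[\beta,\eta)$-stable (Fact \ref{fact:every_ord_ev_stable}); and (iii) $A=i_{\beta\eta}(\bar A)$ for some $\bar A\in M_\beta$ (direct limit). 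The crucial ingredient is internal iterability: the external map $i_{\beta\eta}$ is exactly the length-$\tau$ iteration map $k$ of $M_\beta$ by $U_\beta$, which is definable over $M_\beta$ from $U_\beta,\tau$, so the statement ``$A$ is the image of $\bar A$ under the length-$\tau$ iteration of $U_\beta$'' holds in $M_\beta$. Applying the elementary $i_{\beta\gamma}$ for $\gamma\in[\beta,\eta)$, and using $i_{\beta\gamma}(\tau)=\tau$ and $i_{\beta\gamma}(\bar A)=:A_\gamma$, we learn that $i_{\beta\gamma}(A)$ is the image of $A_\gamma$ under the internal length-$\tau$ iteration of $M_\gamma$ by $U_\gamma$; but that iteration reaches $M_{\gamma+\tau}=M_\eta$ with map $i_{\gamma\eta}$ (here $\gamma+\tau=\eta$ by indecomposability), so $i_{\beta\gamma}(A)=i_{\gamma\eta}(A_\gamma)=i_{\beta\eta}(\bar A)=A$. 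Thus $A$ is $[\beta,\eta)$-stable.

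The main obstacle is exactly the stability of $A$ \emph{as a set} when $\cof(\eta)=\om$: one cannot merely stabilize the (possibly infinitely many) ordinals of $A$ lying above the critical sequence and take a supremum of their stabilization stages, since that supremum may equal $\eta$. The device that circumvents this is to push all the bookkeeping into the single ordinal parameter $\tau=\eot(\eta)$: because $\tau$ is additively indecomposable and eventually stable, the internal length-$\tau$ iteration is uniformly the ``same'' map from every $M_\gamma$ with $\gamma\in[\beta,\eta)$ onto $M_\eta$, and elementarity of $i_{\beta\gamma}$ then transports the defining equation $A=i_{\beta\eta}(\bar A)$ to the identity $i_{\beta\gamma}(A)=A$ in a single step. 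I expect the only delicate points to check are the agreement of the internal and external iterations (the standard absoluteness of linear iterations of a measure and its images) and that the three requirements on $\beta$ can be arranged simultaneously.
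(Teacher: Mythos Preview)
Your proof is correct and follows essentially the same route as the paper's: both directions hinge on the fact that the external map $i_{\gamma\eta}$ coincides with the internal length-$\tau$ iteration of $U_\gamma$ inside $M_\gamma$ (once $\gamma\geq\beta$ with $\beta+\tau=\eta$ and $\tau$ $[\beta,\eta)$-stable), so that the equation $A=i_{\gamma\eta}(A_\gamma)$ is a first-order statement which is preserved by $i_{\gamma\delta}$. Your write-up is slightly more explicit than the paper's in a couple of places---you spell out why $\eot(\eta)$ is additively indecomposable (so $\gamma+\tau=\eta$ for all $\gamma\in[\beta,\eta)$), and you include the bookkeeping that $M_\eta\sub M_\gamma$ for all $\gamma<\eta$---but the core mechanism is identical.
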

  \begin{proof}
  Let $\tau=\eot(\eta)$.
   Suppose $A\in M^{\Tt_U}_\eta$.
 Let $\beta<\eta$ be such that $A\in\rg(i^{\Tt_U}_{\beta\eta})$,
 $\beta+\tau=\eta$
 and $\tau$ is $[\beta,\eta)$-stable.
Let us show that $A$ is $[\beta,\eta)$-stable. For
 $\gamma\in[\beta,\eta)$
 let $A=i^{\Tt_U}_{\gamma\eta}(A_\gamma)$.
 Then note that for such $\gamma$,
 we have \begin{equation}\label{eqn:A_in_M^T_gamma}M^{\Tt_U}_\gamma\sats\text{``}A=i^{\Tt_{U_\gamma}}_{0\tau}(A_\gamma)\text{''}.\end{equation}
 But then applying $i^{\Tt_U}_{\gamma\delta}$ to this when
 $\delta\in[\gamma,\eta)$,
 and letting $A'=i^{\Tt_U}_{\gamma\delta}(A)$,
 we have \begin{equation}\label{eqn:A'_in_M^T_delta}M^{\Tt_U}_\delta\sats\text{``}A'=i^{\Tt_{U_\delta}}_{0\tau}(A_\delta)\text{''.}\end{equation}
  But then comparing lines (\ref{eqn:A_in_M^T_gamma}) (where we can now replace $\gamma$ with $\delta$) and (\ref{eqn:A'_in_M^T_delta}),
 we have $A'=A$.

 Now suppose that $A\in\bigcap_{\alpha<\eta}M^{\Tt_U}_\alpha$ and there is $\beta<\eta$ such that $A$ is $[\beta,\eta)$-stable.
 We show that $A\in M^{\Tt_U}_\eta$.
 We may take $\beta<\eta$
 such that $A$ is $[\beta,\eta)$-stable, $\eta=\beta+\tau$ and $\tau$ is $[\beta,\eta)$-stable.
 Let $A^*=i^{\Tt_U}_{\beta\eta}(A)$,
 so $A^*=i^{\Tt_U}_{\gamma\eta}(A)$ for all $\gamma\in[\beta,\eta)$.
 Then for all $\alpha\in\OR$,
 we have $\alpha\in A$ iff $\alpha^*\in A^*$. Since $A^*\in M^{\Tt_U}_\eta$,
 by Lemma~\ref{lem:*_def},
 we can deduce that $A\in M^{\Tt_U}_\eta$.
  \end{proof}

  \subsection{Iterations of Multiple Measures}
We will now  consider iterations of measures more general than those in \S\ref{sec:iterations_single_measure}. We will  no longer require that $E^\Tt_\alpha=i^\Tt_{0\alpha}(U)$ for some fixed $U$, nor even that $E^\Tt_\alpha\in\rg(i^\Tt_{0\alpha})$.
  \label{sec:iterations_mult_meas}
\begin{dfn}\label{2.9}
 Say that a linear iteration $\Tt$ on $V$ is a \emph{linear iteration of measures}
 iff $M^\Tt_\alpha\sats$ ``$E^\Tt_\alpha$ is a measure (not necessarily normal)'' for all $\alpha+1<\lh(\Tt)$. For a linear iteration of measures, say that $\Tt$ is \emph{above $\mu$}
 if $\mu\leq\crit(i^\Tt)$
 (equivalently, $M^\Tt_\alpha\sats$ ``$E^\Tt_\alpha$ is $\mu$-complete'' for all $\alpha+1<\lh(\Tt)$),
 and \emph{based on $V_\delta$}
 iff $E^\Tt_\alpha\in V_{i^\Tt_{0\alpha}(\delta)}^{M^\Tt_\alpha}$ for all $\alpha+1<\lh(\Tt)$.
\end{dfn}

The following lemma is just a very slight variant on a well-known fact (see Kunen \cite{Kunen_model_negation_AC},
and very related calculations in  \cite[Lemma 4.5]{steel_dmt} and \cite[Lemma 3.17]{vmom_v2}, for example). We give  details for self-containment.
\begin{lem}\label{lem:embedding_agreement}
 Assume ZFC.
 Let  $\Tt$ and $\Uu$ be linear iterations of measures
 on $V$, of successor lengths $\alpha+1$ and $\beta+1$ respectively.
 Suppose there is an inaccessible cardinal $\mu$ such that
 $\alpha<\mu$,
 $\Tt$ is based on $V_\mu$,
 and
 $\Uu$ is above $\mu$.
 Let $V'=M^\Tt_\infty$ and
 $\Uu'=i^{\Tt}\Uu$ \tu{(}the copy\footnote{\label{ftn:copying}See \cite[\S4.1]{outline}, though that version is much more general. Define $\pi_0=i^\Tt$.
 In general we set $E^{\Uu'}_\alpha=\pi_\alpha(E^\Uu_\alpha)$, then define $\pi_{\alpha+1}:M^\Uu_{\alpha+1}\to M^{\Uu'}_{\alpha+1}$ as the natural map with $\pi_{\alpha+1}\com i^\Uu_{\alpha,\alpha+1}=i^{\Uu'}_{\alpha,\alpha+1}\com\pi_\alpha$ (see \cite[Lemma 4.2]{outline}),
 and for limit $\lambda$,
 $\pi_\lambda:M^\Uu_\lambda\to M^{\Uu'}_\lambda$ similarly commutes with the earlier $\pi_\alpha$'s and iteration maps.
 We set $\lh(\Uu')=\lh(\Uu)$.
 If $\lh(\Uu)<\crit(i^\Tt)$
 then $\Uu'=i^\Tt(\Uu)$
 and $\pi_\alpha=i^\Tt\rest M^\Uu_\alpha$, but if $\lh(\Uu)\geq\crit(i^\Tt)$ then this breaks down.} of $\Uu$ under $i^\Tt$, to an iteration on $V'$\tu{)}.
 Then $i^\Uu(V')=M^{\Uu'}_\infty$
 and $i^\Uu\rest V'=i^{\Uu'}$.
\end{lem}

By the usual convention,  $i^\Uu(V')=\bigcup_{\alpha\in\OR}i^\Uu(V_\alpha\cap V')$.
\begin{proof}
Let $\nu$ be a regular cardinal with $\Uu\in V_\nu$. Let $\nu'=i^\Tt(\nu)=\sup i^\Tt``\nu$.
Every $x\in M^{\Uu}_\infty$
has the form
$x=i^\Uu(f)(a)$
for some $f:[\nu]^{<\om}\to V$ and $a\in[i^\Uu(\nu)]^{<\om}$.

Let $k=i^\Tt\rest\nu:\nu\to \nu'=i^\Tt(\nu)$. Let $\pi_\infty:M^{\Uu}_\infty\to M^{\Uu'}_\infty$ be the final copy map induced by $\pi_0=i^\Tt:V\to V'$
and the copying of $\Uu$ to $\Uu'$ (see Footnote~\ref{ftn:copying}).
Recall that
\begin{equation*}\label{eqn:label_for_eqn_5} \pi_\infty\com i^\Uu=i^{\Uu'}\com i^\Tt.\end{equation*}
Every  $x\in M^{\Uu'}_\infty$ has the form
$x=i^{\Uu'}(f)(b)$
for some $f\in V'$ with $f:\nu'\to V'$
and some $b\in[i^{\Uu'}(\nu')]^{<\om}$.
In fact, because $\Uu'$ arises from copying $\Uu$, which uses only measures, we can always take $b\in\rg(\pi_\infty)$ here; so $x$ has form  $x=i^{\Uu'}(f)(\pi_\infty(a))$ for some such $f\in V'$
and $a\in[i^\Uu(\nu)]^{<\om}$.

Using these observations, we define a map
\begin{equation*}\label{eqn:label_for_eqn_6} \sigma:M^{\Uu'}_\infty\to i^\Uu(V'), \end{equation*}
as follows.
For $f\in V'$ with $f:\nu'\to V'$ and for $a\in[i^\Uu(\nu)]^{<\om}$, let
\begin{equation*}\label{eqn:label_for_eqn_7} \sigma\big(i^{\Uu'}(f)(\pi_\infty(a))\big)=i^{\Uu}(f\com k)(a).\end{equation*}

Let us verify that $\sigma$ is well-defined. Let $f,g\in V'$
with $f,g:\nu'\to V'$, let $a,b\in[i^\Uu(\nu)]^{<\om}$,
and suppose
\begin{equation*}\label{eqn:label_for_eqn_8} i^{\Uu'}(f)(\pi_\infty(a))=i^{\Uu'}(g)(\pi_\infty(b)).\end{equation*}
We must see that
\begin{equation}\label{eqn:for_pi_well-def} i^{\Uu}(f\com k)(a)=i^\Uu(g\com k)(b).\end{equation}
By slightly modifying $f,g,a,b$, we may and will assume that $a=b$. So then
\begin{equation*}\label{eqn:label_for_eqn_9} X\eqdef\{u\in[\nu']^{<\om}\bigm|f(u)=g(u)\}\in E'_{\pi_\infty(a)},\end{equation*}
where $E'_{\pi_\infty(a)}$ is the measure over $[\nu']^{<\om}$ derived from $i^{\Uu'}$ with seed $\pi_\infty(a)$; that is,
\begin{equation*}\label{eqn:label_for_eqn_10} A\in E'_{\pi_\infty(a)}\iff A\in V'\wedge A\sub[\nu']^{<\om}\wedge \pi_\infty(a)\in i^{\Uu'}(A).\end{equation*}
Let $E_a$ be the measure over $[\nu]^{<\om}$
derived from $i^\Uu$.

We claim that
$i^\Tt(E_a)=E'_{\pi_\infty(a)}$.
For $i^\Tt``E_a\sub E'_{\pi_\infty(a)}$,
since given any $A\in E_a$, we have $a\in i^\Uu(A)$, so
\begin{equation*}\label{eqn:label_for_eqn_11}\pi_\infty(a)\in \pi_\infty(i^\Uu(A))=i^{\Uu'}(i^\Tt(A)), \end{equation*}
so $i^\Tt(A)\in E'_{\pi_\infty(a)}$.
But because $E_a$ is $\mu$-complete
and $\Tt\in V_\mu$,
given any $B\in i^\Tt(E_a)$,
there is $A\in E_a$ with $i^\Tt(A)\sub B$,
and therefore $B\in E'_{\pi_\infty(a)}$.
So $i^\Tt(E_a)\sub E'_{\pi_\infty(a)}$,
so in fact we have equality, as desired.

Now since $X\in E'_{\pi_\infty(a)}$,  there is $Y\in E_a$ with $i^\Tt(Y)\sub X$.
But then for each $\alpha\in Y$,
we have $i^\Tt(\alpha)\in X$, and so
$(f\com k)(\alpha)=f(i^\Tt(\alpha))=g(i^\Tt(\alpha))=(g\com k)(\alpha)$. Since $Y\in E_a$ and $a=b$, line~(\ref{eqn:for_pi_well-def}) follows.

So $\sigma$ is well-defined,
and similarly, $\sigma:M^{\Uu'}_\infty\to i^\Uu(V')$ is elementary.

We claim now that $\sigma$ is surjective.
For let $x\in i^\Uu(V')$;
we will show $x\in\rg(\pi)$. Fix
 $h:\nu\to V'$ and $a\in[i^\Uu(\nu)]^{<\om}$ with $x=i^\Uu(h)(a)$.
Note that since $\mu$ is inaccessible, we can fix $\delta<\mu$ such that $\Tt$ only uses measures from $V_\delta$ and its images.
For each $\alpha<\nu$,
let $(f_\alpha,b_\alpha)$
be such that $f_\alpha:[\delta]^{<\om}\to V$ and $b_\alpha\in [i^\Tt(\delta)]^{<\om}$ and
 $h(\alpha)= i^\Tt(f_\alpha)(b_\alpha)$. By $\mu$-completeness, and since $i^\Tt(\delta)<\mu$,
 we can fix $Y\in E_a$
 and $b$ such that for all $\alpha\in Y$, we have $b_\alpha=b$.
 Now define $\wt{h}:[\delta]^{<\om}\to V$
 by setting $\wt{h}(x)$
 to be the function  $\wt{h}(x):\nu\to V$ where $\wt{h}(x)(\alpha)=f_\alpha(x)$.
 Let  $f=i^\Tt(\wt{h})(b)$.
 Then note that $f\in V'$ and $f:\nu'\to V'$ and for all $\alpha\in Y$, we have \begin{equation*}\label{eqn:label_for_eqn_12}(f\com k)(\alpha)=i^\Tt(\widetilde{h})(b)\big(i^\Tt(\alpha)\big)=i^\Tt(f_\alpha)(b)=h(\alpha).\end{equation*}
 Therefore
 $\sigma(i^{\Uu'}(f)(\pi_\infty(a)))=i^\Uu(f\com k)(a)=i^\Uu(h)(a)=x$,
which suffices.

 We have shown that $\sigma$ is an isomorphism, so $i^\Uu(V')=M^{\Uu'}_\infty$.
 Finally, the fact that $i^\Uu\rest V'=i^{\Uu'}$ is just because for $x\in V'$, we have
 \begin{equation*}\label{eqn:label_for_eqn_13} i^{\Uu'}(x)=\sigma(i^{\Uu'}(x))=\sigma(i^{\Uu'}(c_x)(\emptyset))=i^\Uu(c_x\com k)(\emptyset)=i^\Uu(x)\end{equation*}
 where $c_x:\nu'\to V'$ is the constant function $c_x(\alpha)=x$.
 \end{proof}

 \begin{dfn}\label{dfn:kappa-weakly_normal}Let $\kappa$ be a strong limit cardinal. Let $\Tt$ be an iteration of measures which is based on $V_\kappa$ and has length $\lambda+1\leq\kappa+1$. We say that $\Tt$ is \emph{$\kappa$-weakly normal}
 iff either:
 \begin{enumerate}
 \item[(i)] $\lambda<\kappa$ and $\Tt$ is based on $V_\delta$ for some $\delta<\kappa$, or
 \itemref[item:weakly_normal_type_ii]{(ii)} we have:
 \begin{enumerate}
 \item $\lambda$ is a limit ordinal $\leq\kappa$,
 \item for every $\alpha<\lambda$, $\Tt\rest\alpha$ is based on $V_\delta$ for some $\delta<\kappa$,
 \item for every $\gamma<\kappa$
 there is $\alpha<\lambda$ such that $i^\Tt_{0\alpha}(\gamma)<\crit(i^\Tt_{\alpha\lambda})$.\qedhere
 \end{enumerate}
 \end{enumerate}
 \end{dfn}

 Note that $i^\Tt_{0\lambda}``\kappa\sub\kappa$ for such iterations $\Tt$.

\begin{lem}\label{lem:ordinals_ev_stable_meas_limit}
 Assume ZFC and let $\kappa$ be a limit of measurable cardinals.
 Let $\xi\in\OR$.
 Then there is $\mu<\kappa$
 such that $i^\Tt(\xi)=\xi$ for all
 linear iterations of measures $\Tt$ on $V$ of successor length $\lambda+1$
 such that $\Tt$ is based on $V_\kappa$,
 has length $\lambda+1\leq\kappa+1$,
 is
 $\kappa$-weakly normal, and is above $\mu$.
 \end{lem}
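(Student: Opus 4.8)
The statement says: for a limit $\kappa$ of measurable cardinals and any ordinal $\xi$, there is $\mu < \kappa$ fixing $\xi$ under all linear iterations of measures above $\mu$ based on $V_\kappa$ with $i^\Tt(\kappa) = \kappa$. The natural approach is by induction on $\xi$, or better, to argue directly that the set of "bad" ordinals — those $\xi$ for which no such $\mu$ works — is empty, by taking a least counterexample and deriving a contradiction (illfoundedness of some direct limit, in the spirit of Fact \ref{fact:every_ord_ev_stable}).

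First I would observe that the conclusion is monotone in a sense: if it holds for $\xi$ with witness $\mu_\xi$, and $\eta < \xi$, then $\mu_\xi$ also works for $\eta$ (since any iteration fixing $\xi$ fixes all $\eta < \xi$, as iteration maps are order-preserving and continuous at no harm — more precisely $i^\Tt(\eta) \le i^\Tt(\xi) = \xi$, and if $i^\Tt(\eta) > \eta$ then $i^\Tt$ moves some ordinal $< \xi$, but one must be slightly careful: $i^\Tt \restriction \xi$ need not be the identity just because $i^\Tt(\xi) = \xi$). So the first real point is: for a genuinely least counterexample $\xi$, every smaller ordinal has a witness, but these witnesses may be unbounded in $\kappa$, so we cannot naively take a supremum. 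The key structural input is Lemma \ref{lem:many_U-it_fixed_points_above_meas}(\ref{item:nicely_stable}) together with Lemma \ref{lem:embedding_agreement}: for each measurable $\mu < \kappa$ there are unboundedly many $\mu' < \kappa$ which are themselves "nicely stable", and more importantly, \emph{any} linear iteration of measures based on $V_\kappa$, above some large $\mu$, factors — by Lemma \ref{lem:embedding_agreement} — through iterations that are based on bounded rank initial segments $V_\delta$ with $\delta < \kappa$. Since $\kappa$ is a limit of measurables and the iteration fixes $\kappa$, the measures used all have critical points $< \kappa$ but cofinally so.

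The heart of the argument: suppose $\xi$ is least such that no $\mu < \kappa$ works. Pick a sequence $\mu_n \nearrow \kappa$ of measurable cardinals. For each $n$, since $\xi$ fails at $\mu_n$, there is a linear iteration $\Tt_n$ of measures above $\mu_n$, based on $V_\kappa$, with $i^{\Tt_n}(\kappa) = \kappa$ but $i^{\Tt_n}(\xi) \ne \xi$; by minimality of $\xi$, in fact $i^{\Tt_n}(\xi) > \xi$ (the iteration moves $\xi$ up but fixes everything below — here one uses that $i^{\Tt_n}$ is the identity below $\mu_n$ on ordinals that are $[\ ,\ )$-stable, plus minimality of $\xi$ to see nothing strictly below $\xi$ is moved, so $i^{\Tt_n}(\xi) > \xi$). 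Now I want to \emph{compose} or \emph{amalgamate} these iterations into a single linear iteration of measures based on $V_\kappa$, fixing $\kappa$, whose iteration map moves $\xi$ infinitely often — forcing the direct limit to be illfounded, a contradiction. The cleanest way: use Lemma \ref{lem:embedding_agreement} to rearrange so that $\Tt_{n+1}$ can be run on $M^{\Tt_n}_\infty$ (since $\Tt_n$ is based on $V_\kappa$ and below $\mu_n$... this needs the $\mu_{n}$'s chosen so that $\Tt_n$ is "low" relative to $\mu_{n+1}$); then the concatenation $\Tt = \Tt_0 \conc \Tt_1 \conc \cdots$ is a linear iteration of measures, and each stage strictly increases the image of $\xi$ while the iteration maps commute, so the direct limit along $\Tt$ contains a strictly increasing $\om$-sequence of "copies" of $\xi$-images with no sup in the wellfounded part — contradiction. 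One must also check $i^\Tt(\kappa) = \kappa$: each $\Tt_n$ fixes $\kappa$ and uses only extenders with critical point $<\kappa$ and, being based on $V_\kappa$, the composition still fixes $\kappa$ (using that $\kappa$ is a limit of measurables so it remains a limit of measurables in each $M^{\Tt_n}_\infty$, and the relevant boundedness).

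The main obstacle I anticipate is making the amalgamation/concatenation rigorous: \emph{a priori} $\Tt_{n+1}$ is an iteration of $V$, not of $M^{\Tt_n}_\infty$, and its extenders live in $V$, not in the iterate. This is exactly what Lemma \ref{lem:embedding_agreement} is designed to handle — one copies $\Tt_{n+1}$ onto $M^{\Tt_n}_\infty$ via the map $i^{\Tt_n}$, obtaining $i^{\Tt_n}\Tt_{n+1}$, which is legitimately a linear iteration of measures \emph{on $M^{\Tt_n}_\infty$}. For the copying to preserve "based on $V_\kappa$" and "above $\mu_{n+1}$" and the fixing of $\kappa$, I would choose the $\mu_n$ so that $\Tt_n$ has length $< \mu_{n+1}$ and is based on $V_{\mu_{n+1}}$, so that over $M^{\Tt_n}_\infty$ the copied iteration $i^{\Tt_n}\Tt_{n+1}$ is still based on (the image of) $V_\kappa$, and genuinely amalgamates; the commutativity of the copy maps then propagates $i^\Tt(\xi) > \xi$ unboundedly often. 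Once that bookkeeping is in place, the illfoundedness conclusion is immediate and finishes the proof.
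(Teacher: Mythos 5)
There is a genuine gap, and it is exactly at the point you flag as "bookkeeping". Your concatenation-and-illfoundedness argument is sound only when each counterexample iteration $\Tt_n$ is based on some $V_{\delta_n}$ with $\delta_n<\kappa$ and has length below the next critical point, because Lemma \ref{lem:embedding_agreement} — which is what guarantees that the copied iteration $i^{\Tt_n}\Tt_{n+1}$ moves $\xi$ the same way $\Tt_{n+1}$ does — requires the "low" tree to be based on $V_\mu$ and have length $<\mu$ for an inaccessible $\mu$ below which the "high" tree has no critical points. But your $\Tt_n$ is only guaranteed to be a counterexample of the kind the lemma quantifies over: based on $V_\kappa$, fixing $\kappa$, above $\mu_n$. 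Such a $\Tt_n$ may use measures with critical points cofinal in $\kappa$ (indeed, if it were based on a bounded $V_{\delta}$ you would essentially be in the easy case), so you cannot "choose $\mu_{n+1}$ so that $\Tt_n$ is based on $V_{\mu_{n+1}}$" — that property is not yours to choose, and without it the amalgamation gives you no control over $i^{i^{\Tt_n}\Tt_{n+1}}(\xi)$ and hence no descending sequence. (Your side-worry about $i^{\Tt_n}(\xi)>\xi$ versus $\neq\xi$ is moot: elementary embeddings of $V$ into wellfounded inner models never decrease ordinals, so no appeal to minimality of $\xi$ is needed there.)

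The paper's proof makes your argument work by splitting into two stages. Stage one proves, verbatim by your concatenation construction ($\Tt_1'=i^{\Tt_0}\Tt_1$, $\Tt_2'=i^{\Tt_1'}\com i^{\Tt_0}\Tt_2$, etc.), the \emph{weak} version: the lemma restricted to iterations based on $V_\delta$ for some $\delta<\kappa$ and of length $<\kappa$; there the hypotheses of Lemma \ref{lem:embedding_agreement} can be arranged by interleaving the $\mu_n$ and $\delta_n$. Stage two — the part missing from your sketch — reduces the general case to the weak one: for a general $\Tt$ of length $\lambda+1$ with $i^\Tt(\kappa)=\kappa$ and not based on any bounded $V_\delta$, each proper initial segment $\Tt\rest(\alpha+1)$ is based on a bounded $V_{\delta_\alpha}$ and so fixes $\xi$, and the whole issue is concentrated at the limit stage: one must show that every $\zeta<i^\Tt_{0\lambda}(\xi)$, written as $i^\Tt_{\beta\lambda}(\eta)$ with $\eta<\xi$, is in fact fixed by all $i^\Tt_{\beta\gamma}$, $\gamma<\lambda$. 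This is done by applying the weak version \emph{inside} $M^\Tt_\beta$ to $\eta$ (with the internal witness $(\mu_\eta)^{M^\Tt_\alpha}$), and — since the tail $\Tt\rest[\beta,\gamma+1)$ need not be an element of $M^\Tt_\beta$ — absorbing it into a "universal" iteration $\Tt'\in M^\Tt_\beta$ that uses every relevant measure enough times. You would need to supply this entire second stage for your proof to go through.
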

\begin{proof}
Let us first prove the lemma restricted to iterations which
are also of length $\lambda+1<\kappa$ and based on $V_\delta$ for some $\delta<\kappa$.
 Suppose this version fails. Then we can pick a sequence $\left<\mu_n,\delta_n,\Tt_n\right>_{n<\om}$
 such that:
 \begin{enumerate}
  \item[--] $\mu_n$ is inaccessible and $\mu_n<\delta_n<\mu_{n+1}$,
  \item[--] $\Tt_n$ is a linear iteration of measures on $V$, is above $\mu_n$ and  based on $V_{\delta_n}$, and has length $<\mu_{n+1}$, and
  \item[--] $i^{\Tt_n}(\xi)>\xi$.
 \end{enumerate}
Let $\Tt_1'=i^{\Tt_0}\Tt_1$,
let $\Tt_2'=i^{\Tt_1'}\com i^{\Tt_0}\Tt_2$, etc., defining $\Tt_n'$ for all $n<\om$. Note that $\Uu=\Tt_0\conc\Tt_1'\conc\Tt_2'\conc\ldots$ is a linear iteration on $V$.
By Lemma~\ref{lem:embedding_agreement},
$i^{\Tt_n'}(\xi)=i^{\Tt_n}(\xi)>\xi$.
But then $M^{\Uu}_\infty$ is illfounded, a contradiction.

We now prove the full lemma.
For $\xi\in\OR$,
let $\mu_\xi$  be the least witness for the restricted version of the lemma with respect to $\xi$. We claim that $\mu^*_\xi\eqdef\max(\mu_\kappa,\mu_\xi)+1$  witnesses the full lemma.
So let  $\Tt$ be an iteration of the form considered for the full lemma, which is above $\mu^*_\xi$;
we want to see  $i^\Tt(\xi)=\xi$.
Let $\lambda+1=\lh(\Tt)$.
We may assume that $\Tt$ is not based on $V_\delta$ for any $\delta<\kappa$.
(Suppose otherwise.
If $\lambda=\kappa$
then we get $i^\Tt_{0\lambda}(\delta)\geq\kappa$, contradicting the assumption that $i^\Tt_{0\lambda}``\kappa\sub\kappa$.
So $\lambda<\kappa$,
but then the weak version of the lemma applies.) So clause~\ref{item:weakly_normal_type_ii}
of the definition of $\kappa$-weak normality holds.

\begin{clm*}For every $\alpha<\lambda$,
we have $i^\Tt_{0\alpha}(\xi)=\xi$,
and for
every $\eta<\xi$, there is $\beta\in(\alpha,\lambda)$ such that
\begin{equation}\label{eqn:fix_eta_image} i^\Tt_{\beta\gamma}(i^\Tt_{\alpha\beta}(\eta))=i^\Tt_{\alpha\beta}(\eta) \end{equation}
for all $\gamma\in(\beta,\lambda)$.
\end{clm*}

Assuming the claim, we can complete the proof. For note that  by the claim,
we also have $i^\Tt_{\alpha\beta}(\eta)<i^\Tt_{\alpha\beta}(\xi)=i^\Tt_{0\beta}(\xi)=\xi$.
Since $M^\Tt_\lambda$ is the direct limit of these $M^\Tt_\alpha$'s, every ordinal below $i^\Tt(\xi)$ has form $i^\Tt_{\alpha\lambda}(\eta)$ for some $(\alpha,\eta)$ as in the claim, so it follows
that $i^\Tt(\xi)=i^\Tt_{0\lambda}(\xi)=\xi$.

\begin{proof}[Proof of Claim]
Let $\alpha<\lambda$,  and fix $\delta_\alpha<\kappa$ such that
$\Tt\rest(\alpha+1)$ is based on $V_{\delta_\alpha}$.
Since  $\delta_\alpha$ exists
and
by the weak version of the lemma, $i^\Tt_{0\alpha}(\xi)=\xi$.
Let
 $\eta<\xi$. Let $\mu=(\mu^*_\eta)^{M^\Tt_\alpha}$
 (that is, $\mu=i^{\Tt}_{0\alpha}(f)(\eta)$ where $f(\beta)=\mu^*_\beta$). Note here that $i^\Tt_{0\alpha}(\kappa)=\kappa$
 and $i^\Tt_{0\alpha}(\mu_\kappa)=\mu_\kappa$ (since $\mu^*_\xi>\mu_\kappa$), so $(\mu^*_\eta)^{M^\Tt_\alpha}$ is defined relative to the same parameter $\kappa$ in $M^\Tt_\alpha$.
Fix $\beta+1\in(\alpha+1,\lambda)$
such that $\Tt\rest[\beta,\lambda+1)$ is above $i^\Tt_{\alpha\beta}(\mu)$.

We will show that $\beta$ witnesses the remainder of the claim for $(\alpha,\eta)$; that is,
line~(\ref{eqn:fix_eta_image})
holds
for all $\gamma\in(\beta,\lambda)$.
If for a given $\gamma$,
we had $\Tt\rest[\beta,\gamma+1)\in M^\Tt_\beta$, line~(\ref{eqn:fix_eta_image}) would follow immediately from the weak version of the lemma and the choice of $\mu$, etc.
But to handle the possibility that $\Tt\rest[\beta,\gamma+1)\notin M^\Tt_\beta$,
 we need a little more argument, which is as follows. The iteration $\Tt\rest[\beta,\gamma+1)$
can anyway be ``absorbed'' into some iteration $\Tt'\in M^\Tt_\beta$ (described in detail below),
which is above $i^\Tt_{\alpha\beta}(\mu)$, is based on some $V_\delta^{M^\Tt_\beta}$ with $\delta<\kappa$, and has length $<\kappa$,
and so \begin{equation}\label{eqn:fix_eta_image_prime}i^{\Tt'}(i^\Tt_{\alpha\beta}(\eta))=i^\Tt_{\alpha\beta}(\eta),\end{equation} again by our choice of $\mu$, etc. But this ``absorption''
yields  an elementary $\pi:M^{\Tt}_{\gamma}\to M^{\Tt'}_\infty$
with
$\pi\com i^{\Tt}_{\beta\gamma}=i^{\Tt'}$,
which, together with line~(\ref{eqn:fix_eta_image_prime}),
yields line~(\ref{eqn:fix_eta_image}).

The iteration $\Tt'$ is a typical instance of a ``universal'' iteration.
Recall that $\Tt\rest\gamma+1$
is based on $V_{\delta_\gamma}$
and has length $<\kappa$.
So $\Tt\rest[\beta,\gamma+1)$
is an iteration on $M^\Tt_\beta$,
which is based on $i^\Tt_{0\beta}(V_{\delta_\gamma})$,
has length some $\zeta+1<\kappa$,
and is above $i^\Tt_{\alpha\beta}(\mu)$.
Working in $M^\Tt_\beta$,
let $\Tt'$ be the linear iteration of measures which is based on $V^{M^{\Tt}_\beta}_{i^\Tt_{0\beta}(\delta_\gamma)}$,
is above $i^\Tt_{\alpha\beta}(\mu)$,
and uses every measure available ``sufficiently often''.
That is, define a continuous, strictly increasing sequence $\left<\lambda_\theta\right>_{\theta\leq\zeta}$
and define $\Tt'\rest\lambda_\theta+1$
by recursion on $\theta$,  as follows:
Set $\lambda_0=0$.
Given $\theta<\zeta$ and $\Tt'\rest\lambda_\theta+1$,
let $\left<F_\iota\right>_{\iota<\upsilon}$
enumerate all measures of $M^{\Tt'}_{\lambda_\theta}$ which are in $i^{\Tt'}_{0\lambda_\theta}(i^{\Tt}_{0\beta}(V_{\delta_\gamma}))$ and are $i^\Tt_{\alpha\beta}(\mu)$-complete in $M^{\Tt'}_{\lambda_\theta}$.
Set $\lambda_{\theta+1}=\lambda_\theta+\upsilon$,
and let $\Tt'\rest[\lambda_\theta,\lambda_{\theta+1}]$
be given by using the (images of the) $F_\iota$'s in turn; that is,
$E^{\Tt'}_{\lambda_\theta+\iota}=i^{\Tt'}_{\lambda_\theta,\lambda_\theta+\iota}(F_\iota)$. This completes the definition of $\Tt'$.

We can now define embeddings $\pi_\theta:M^{\Tt}_{\beta+\theta}\to M^{\Tt'}_{\lambda_\theta}$ via a natural copying: Set $\pi_0=\id$.
Given $\pi_\theta$, let $\iota$ be such that $\pi_\theta(E^\Tt_{\beta+\theta})=F_\iota$, using the enumeration of measures from stage $\theta$ as above.
Define $\pi'_\theta:M^{\Tt}_{\beta+\theta}\to M^{\Tt'}_{\lambda_\theta+\iota}$
to be $\pi'_\theta=i^{\Tt'}_{\lambda_\theta,\lambda_\theta+\iota}\com \pi_\theta$,
define $\widetilde{\pi}_{\theta+1}:M^{\Tt}_{\beta+\theta+1}\to M^{\Tt'}_{\lambda_\theta+\iota+1}$ to be the map induced by $\pi_\theta'$, so in fact $\widetilde{\pi}_{\theta+1}=\pi_\theta'\rest M^\Tt_{\beta+\theta+1}$; note we have
\begin{equation*}\label{eqn:label_for_eqn_14} M^\Tt_{\beta+\theta+1}=\Ult(M^\Tt_{\beta+\theta},E^\Tt_{\beta+\theta})\end{equation*}
and
\begin{equation*}\label{eqn:label_for_eqn_15} M^{\Tt'}_{\lambda_\theta+\iota+1}=\Ult(M^{\Tt'}_{\lambda_\theta+\iota},\pi'_\theta(E^\Tt_{\beta+\theta})).\end{equation*}
Then define $\pi_{\theta+1}=i^{\Tt'}_{\lambda_\theta+\iota+1,\lambda_{\theta+1}}\com \widetilde{\pi}_{\theta+1}$.
At limits $\theta$, define $\pi_\theta$
by commutativity as usual. This completes the construction of the embeddings $\pi_\theta$ for $\theta\leq\zeta$, and setting $\pi=\pi_\zeta$, we have $\pi:M^\Tt_\gamma\to M^{\Tt'}_\infty$
and $\pi\com i^\Tt_{\beta\gamma}=i^{\Tt'}$, as desired.
\end{proof}
As mentioned above, this completes the proof of the lemma.
\end{proof}

\section{Wellorders above Measurable Cardinals}
\label{sec:wellorders_above_measurable_cardinals}
\subsection{Above a Single Measurable}\label{sec:wo_above_one_meas}

Recall (see for example \cite{meas_cards_good_Sigma_1_wo})
that for a definability class $\Gamma$,
a wellorder $<^*$ of a set $X$ is \emph{$\Gamma$-good}
iff
the class
$\{(x,{<^*_x})\}_{x\in X}$
is $\Gamma$-definable, where
${<^*_x}=\{y\in X\bigm|y<^*x\}$.
If $<^*$ is $\Sigma_1(Y)$-good,
then clearly the relation ${<^*}$ is also $\Sigma_1(Y)$-definable.

In \cite{meas_cards_good_Sigma_1_wo},
assuming $V=L[U]$ where $U$ is a normal measure on a measurable cardinal $\mu$,
 L\"ucke and Schlicht characterized
those cardinals $\kappa$ such that there is a
$\Sigma_1(\{\kappa\})$-good wellorder of $\pow(\kappa)$:
 precisely those $\kappa$ such that $\kappa\leq\mu^+$
or $\kappa$ is non-$\mu$-steady. (They also analyzed the analogous question in the canonical model $L[U_0,U_1]$ for two measurables.)
In \cite[Question 2]{meas_cards_good_Sigma_1_wo}, they asked, still assuming $V=L[U]$,
given an uncountable cardinal $\kappa$,
whether there is a  $\Sigma_1(\{\kappa\})$ (but not necessarily $\Sigma_1(\{\kappa\})$-good) wellorder of $\pow(\kappa)$. (Of course, their result just mentioned already answered this question positively
in the case that $\kappa\leq\mu^+$ or $\kappa$ is non-$\mu$-steady.)

The question was answered by \cite[Theorem 7.1]{Sigma_1_def_at_higher_cardinals}, where L\"ucke and M\"uller proved the following facts, assuming ZFC, $\mu$ is  measurable
and
$\kappa>\mu^+$ is $\mu$-steady,
as witnessed by $\nu$ (so $\kappa\in\{\nu,\nu^+\}$, etc.).

\begin{enumerate}
 \itemref[item:no_wellorder_of_pow(mu)]{(i)} There is no $\Sigma_1(V_\mu\cup\{\nu,\nu^+\})$ wellorder
 of $\pow(\kappa)$.
 \itemref[item:no_injection_mu^+_to_pow(mu)]{(ii)} If $\cof(\kappa)>\om$ then
 there is no $\Sigma_1(V_\mu\cup\{\kappa\})$ injection from $\kappa^+$ into $\pow(\kappa)$.\footnote{One can also allow the parameter $\kappa^+$
 without affecting their proof;
 but if $\nu<\nu^+=\kappa$,
 allowing the parameter $\nu$ would be a problem for their proof.}
\end{enumerate}

We show in the following theorem that the hypothesis ``if $\cof(\kappa)>\om$'' is  not necessary in part~\ref{item:no_injection_mu^+_to_pow(mu)} of the L\"ucke-M\"uller result. We will also give a slightly different
(and more self-contained) proof of part~\ref{item:no_wellorder_of_pow(mu)},
which also yields the slightly stronger
fact that there is no such wellorder of $[\kappa]^\om$,
nor in fact of $[\eta]^\om_\uparrow$,
for a certain ordinal $\eta<\kappa$:
\begin{dfn}\label{dfn:3.1}For ordinals $\eta$,
 $[\eta]^\om_\uparrow$ denotes
 $\{A\in\pow(\eta)\mid A\text{ has ordertype }\om\}$).
\end{dfn}

\begin{tm}\label{tm:above_one_measurable}
 Assume ZFC and let $\mu$ be a measurable cardinal and $\kappa>\mu^+$ be a $\mu$-steady cardinal.
 Let $\nu>\mu$ be $\mu$-closed and such that $\cof(\nu)\neq\mu$ and $\kappa\in\{\nu,\nu^+\}$.
 Let $p\in V_\mu$ and $\varphi$
 be a $\Sigma_1$ formula.
 Let $U$ be a $\mu$-complete measure on $\mu$
 and $\eta=\crit(U_\om)=i^{\Tt_U}_{0\om}(\mu)$
 \tu{(}as in \ref{dfn:linear_iteration}\tu{)},
 so $\eta$ is the  sup of the ``critical sequence'' associated to $U$.
 For $\delta\in\OR$
 let $S_\delta$ be the class of $U$-$[0,\delta)$-stable sets;
 note that $V_\mu\cup\{\kappa,\kappa^+\}\sub S_\kappa$
 and $V_\mu\cup\{\nu,\nu^+,\kappa,\kappa^+,\eta\}\sub S_\om$.
 Then:
 \begin{enumerate}
  \item\label{item:no_wellorder_of_eta^om_uparrow} There is no $\Sigma_1( S_\om)$ wellorder of $[\eta]^\om_{\uparrow}$
  \tu{(}and hence no such wellorder of $\pow(\kappa)$\tu{)}.
  \item\label{item:no_injection_kappa^+_to_p(kappa)_allowing_countable_cof} There is no $\Sigma_1( S_\kappa)$ injective function $f:\kappa^+\to\pow(\kappa)$; in fact, for any $\Sigma_1(S_\kappa)$ set $f\sub\kappa^+\cross\pow(\kappa)$ and any club $C\sub\kappa^+$, $f\rest C$ is  not an injective function \tu{(}note there is no definability requirement on $C$\tu{)}.
  \item\label{item:wellorders_of_kappa}Every $\Delta_1(S_\kappa)$ subset of $\kappa$
  is in $M^{\Tt_U}_\kappa$ \tu{(}see \ref{dfn:linear_iteration}\tu{)};
  moreover, the supremum of lengths of all $\Sigma_1(V_\mu\cup\{\kappa,\kappa^+\})$ wellorders of $\kappa$ is $<\kappa^{+M^{\Tt_U}_\kappa}$.
  \item\label{item:functions_on_kappa,kappa^+} Every $\Delta_1(S_\kappa)$ class $C\sub\OR$ is
  definable from parameters over $M^{\Tt_U}_\kappa$, and in particular, $C\cap
  \alpha\in M^{\Tt_U}_\kappa$ for each $\alpha\in\OR$. Moreover,
  if
  $C$ is $\Delta_1(\{\vec{p}\})$ where $\vec{p}\in (S_\kappa)^{<\om}$ then for each ordinal $\beta$,
   $C\cap\beta$ is $\Delta_1^{M^{\Tt_U}_\kappa}(\{\vec{q},U_\kappa,\mathscr{F}\})$ where $(\vec{q},U_\kappa)=i^{\Tt_U}_{0\kappa}(\vec{p},U)$
  and $\mathscr{F}=(^{\kappa}\beta)\cap M^{\Tt_U}_{\kappa}$.
 \end{enumerate}
\end{tm}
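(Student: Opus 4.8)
The common engine is a reflection principle for the iterates $M^{\Tt_U}_\alpha$: if $A\sub\OR$ is $\Delta_1(\{\vec r\})$ with $\vec r$ a tuple fixed by the relevant iteration maps, then, since $M^{\Tt_U}_\alpha\sub V$ is a transitive model of $\ZFC$, upward absoluteness of $\Sigma_1$ and downward absoluteness of $\Pi_1$ (applied to both the $\Sigma_1$- and $\Pi_1$-halves of the definition of $A$, together with elementarity of $i^{\Tt_U}_{\alpha\beta}$ on the fixed parameters) give $i^{\Tt_U}_{\alpha\beta}(A)=A$ whenever $\alpha\le\beta$ range over indices fixing $\vec r$; combined with $A\cap\gamma\in M^{\Tt_U}_\alpha$ for all $\gamma$ (by Separation in $M^{\Tt_U}_\alpha$), Lemma \ref{lem:charac_sets_in_M_eta_limit_eta} then puts $A$ (or its bounded pieces) into $M^{\Tt_U}_\lambda$. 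I would prove \ref{item:wellorders_of_kappa} and \ref{item:functions_on_kappa,kappa^+} first. Let $C\sub\OR$ be $\Delta_1(\{\vec p\})$ with $\vec p\in(S_\kappa)^{<\om}$. Each component of $\vec p$ is $U$-$[0,\kappa)$-stable, so each $i^{\Tt_U}_{0\gamma}$ $(\gamma<\kappa)$ fixes $\vec p$ and $\vec p\in M^{\Tt_U}_\gamma$; hence $C\cap\beta\in\bigcap_{\gamma<\kappa}M^{\Tt_U}_\gamma$ for every $\beta$. By Fact \ref{fact:every_ord_ev_stable} fix $\gamma_0<\kappa$ with $\beta$ being $U$-$[\gamma_0,\kappa)$-stable; applying $i^{\Tt_U}_{\gamma'\gamma}$ ($\gamma_0\le\gamma'\le\gamma<\kappa$) to the definition of $C\cap\beta$ and using $\Delta_1$-absoluteness gives $i^{\Tt_U}_{\gamma'\gamma}(C\cap\beta)=C\cap\beta$, so $C\cap\beta\in M^{\Tt_U}_\kappa$ by Lemma \ref{lem:charac_sets_in_M_eta_limit_eta}; in particular any $\Delta_1(S_\kappa)$ subset of $\kappa$ lies in $M^{\Tt_U}_\kappa$. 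Reading the proof of that lemma together with Lemma \ref{lem:*_def}, $C\cap\beta$ is recovered over $M^{\Tt_U}_\kappa$ from $i^{\Tt_U}_{\gamma_0\kappa}(C\cap\beta)$ and the map $*_{\kappa,U}$ (definable over $M^{\Tt_U}_\kappa$ from $U_\kappa$ and $*_{\kappa,U}(\eot(\kappa))$), and chasing the definitions yields $\xi\in C\iff M^{\Tt_U}_\kappa\models\varphi(*_{\kappa,U}(\xi),i^{\Tt_U}_{0\kappa}(\vec p))$, which is the asserted $\Delta_1^{M^{\Tt_U}_\kappa}$-definition of $C$, with parameters $\vec q=i^{\Tt_U}_{0\kappa}(\vec p)$, $U_\kappa$, and — for the bounded pieces — the auxiliary $\mathscr F={}^\kappa\beta\cap M^{\Tt_U}_\kappa$ needed to read off $*_{\kappa,U}\!\rest\!\beta$.

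For the length bound in \ref{item:wellorders_of_kappa}: a $\Sigma_1(V_\mu\cup\{\kappa,\kappa^+\})$ wellorder of $\kappa$ is $\Delta_1(V_\mu\cup\{\kappa,\kappa^+\})$ (the complement of a wellorder of $\kappa$ is again $\Sigma_1$), hence lies in $M^{\Tt_U}_\kappa$ by the above and is $\Delta_1^{M^{\Tt_U}_\kappa}$ from one of only $|V_\mu^{M^{\Tt_U}_\kappa}|=\mu$ parameter tuples together with the fixed data $U_\kappa$ and ${}^\kappa\kappa\cap M^{\Tt_U}_\kappa$; since $\mu<\kappa^{+M^{\Tt_U}_\kappa}$ and $\kappa^{+M^{\Tt_U}_\kappa}$ is regular in $M^{\Tt_U}_\kappa$, the at most $\mu$ resulting ordertypes are bounded below $\kappa^{+M^{\Tt_U}_\kappa}$.

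For \ref{item:no_injection_kappa^+_to_p(kappa)_allowing_countable_cof}, the key new observation is that \emph{every $U$-$[0,\kappa)$-stable subset of $\kappa$ is a subset of $\mu$}: if $X\sub\kappa$ and $i^{\Tt_U}_{0\gamma}(X)=X$ for all $\gamma<\kappa$, then since $\crit(i^{\Tt_U}_{0\gamma})=\mu$ and $i^{\Tt_U}_{0\gamma}(\mu)=\crit(U_\gamma)$, elementarity forces $i^{\Tt_U}_{0\gamma}(X)\cap[\mu,\crit(U_\gamma))=\varnothing$, hence $X\cap[\mu,\crit(U_\gamma))=\varnothing$; as the $\crit(U_\gamma)$ are cofinal in $\kappa$ by Lemma \ref{lem:many_U-it_fixed_points_above_meas}\ref{item:nicely_stable}, $X\sub\mu$. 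Now suppose $f\sub\kappa^+\cross\pow(\kappa)$ is $\Sigma_1(\{\vec p\})$ with $\vec p\in(S_\kappa)^{<\om}$ and $C\sub\kappa^+$ is a club with $f\rest C\colon C\to\pow(\kappa)$ injective. The set $C^{**}$ of $U$-$[0,\kappa)$-stable ordinals below $\kappa^+$ is the intersection of the $\kappa$ many clubs $\{\gamma<\kappa^+:i^{\Tt_U}_{0\gamma'}(\gamma)=\gamma\}$, hence club in $\kappa^+$. For $\gamma\in C\cap C^{**}$ the predicate ``$x=f(\gamma)$'' is $\Delta_1(\{\gamma,\vec p\})$ (using that $f\rest C$ is a function on $C$) with all parameters $U$-$[0,\kappa)$-stable, so the reflection principle gives $i^{\Tt_U}_{0\gamma'}(f(\gamma))=f(\gamma)$ for all $\gamma'<\kappa$, i.e.\ $f(\gamma)$ is $U$-$[0,\kappa)$-stable, hence $f(\gamma)\sub\mu$. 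Thus $f$ injects the club $C\cap C^{**}$ (of size $\kappa^+$) into $\pow(\mu)$, of size $2^\mu<\kappa\le\kappa^+$ (as $\kappa$ is $\mu$-steady it lies at or just above a $\mu$-closed cardinal $>\mu$) — a contradiction. The non-existence of injections $\kappa^+\to\pow(\kappa)$ is the case $C=\kappa^+$.

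For \ref{item:no_wellorder_of_eta^om_uparrow}, suppose $<^*$ is a $\Sigma_1(\{\vec p\})$ wellorder of $[\eta]^\om_\uparrow$ with $\vec p\in(S_\om)^{<\om}$, $\eta=\crit(U_\om)$. The set $D=\{d\in[\eta]^\om_\uparrow:\sup d=\eta\}$ is nonempty (it contains the critical sequence of $U$), so $<^*\rest D$ has a least element $d_0$; since $<^*$ is a wellorder, the singleton $\{d_0\}$ is $\Pi_1(\{\vec p,\eta\})$-definable, from which one reads off that, qua set of ordinals, $d_0$ is $\Delta_1(\{\vec p,\eta\})$. As $\vec p$ and $\eta$ are $U$-$[0,\om)$-stable they are fixed by each $i^{\Tt_U}_{n'n}$ $(n'\le n<\om)$, so the reflection principle gives $d_0\in\bigcap_{n<\om}M^{\Tt_U}_n$ with $i^{\Tt_U}_{n'n}(d_0)=d_0$, whence $d_0\in M^{\Tt_U}_\om$ by Lemma \ref{lem:charac_sets_in_M_eta_limit_eta}. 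But $d_0$ is a cofinal $\om$-sequence in $\eta$, while $\eta=i^{\Tt_U}_{0\om}(\mu)$ is measurable, hence regular, in $M^{\Tt_U}_\om$ — contradiction. Finally $\eta<\kappa$ (Lemma \ref{lem:many_U-it_fixed_points_above_meas}\ref{item:it_maps``kappa_sub_kappa}) and $[\eta]^\om_\uparrow\sub\pow(\kappa)$, and the ``$[\eta]^\om_\uparrow$-membership'' relation is $\Delta_1(\{\eta\})$, so a $\Sigma_1(S_\om)$ wellorder of $\pow(\kappa)$ would restrict to one of $[\eta]^\om_\uparrow$, giving the parenthetical consequence. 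The routine ingredients are the absoluteness bookkeeping and the cardinal arithmetic; the step I expect to be most delicate is tracking, in \ref{item:wellorders_of_kappa}–\ref{item:functions_on_kappa,kappa^+}, exactly how the iteration maps move the parameters — elements of $V_\mu$ are fixed outright, but $\kappa$, $\kappa^+$, and other stable sets are sent to their $*_{\kappa,U}$-images and must be recovered internally via Lemma \ref{lem:*_def} — so that the final definitions genuinely have parameters of the stated form in $M^{\Tt_U}_\kappa$, and, relatedly, checking in \ref{item:no_wellorder_of_eta^om_uparrow} that the $\Pi_1$-definition of $d_0$ really does collapse to a $\Delta_1$-definition of $d_0$ as a set of ordinals.
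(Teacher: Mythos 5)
Your part \ref{item:no_injection_kappa^+_to_p(kappa)_allowing_countable_cof} rests on the claim that every $U$-$[0,\kappa)$-stable subset of $\kappa$ is contained in $\mu$, and this is false: $\kappa$ itself is such a set (the theorem statement even records $\kappa\in S_\kappa$), as is, say, the set of successor ordinals below $\kappa$. The slip is the step ``elementarity forces $i^{\Tt_U}_{0\gamma}(X)\cap[\mu,\crit(U_\gamma))=\emptyset$'': that is true of the pointwise image $i^{\Tt_U}_{0\gamma}``X$, not of $i^{\Tt_U}_{0\gamma}(X)$; in fact $i^{\Tt_U}_{0\gamma}(X)\cap\crit(U_\gamma)=i^{\Tt_U}_{0\gamma}(X\cap\mu)$, which typically meets $[\mu,\crit(U_\gamma))$. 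With that, your cardinality contradiction (injecting a club of $\kappa^+$ into $\pow(\mu)$) evaporates. The correct consequence of stability is weaker but sufficient: for $\xi$ in the intersection of $C$ with the club of $[0,\kappa)$-stable ordinals, $f(\xi)$ is (eventually) stable, hence lies in $M^{\Tt_U}_\kappa$ (either via Lemma \ref{lem:charac_sets_in_M_eta_limit_eta}, or as the paper does, by checking $f(\xi)=i^{\Tt_U}_{\alpha\kappa}(f(\xi))\cap\kappa$), and one then contradicts the fact that $\pow(\kappa)\cap M^{\Tt_U}_\kappa$ has cardinality only $\kappa$.

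In part \ref{item:no_wellorder_of_eta^om_uparrow} your route is genuinely different from the paper's (which builds sets $X_n$ from the critical sequence with $i_{n+1,n+2}(X_{n+1})=X_n$ and derives an infinite $<^*$-descending sequence), and the idea --- the $<^*$-least cofinal $\om$-sequence $d_0$ in $\eta$ would land in $M^{\Tt_U}_\om$, where $\eta$ is measurable, hence regular --- does work; but your justification does not. A $\Pi_1$-singleton is only $\Delta_2$ as a set of ordinals, not $\Delta_1$ (only $\Sigma_1$-singletons collapse to $\Delta_1$), and the paper's Theorem \ref{tm:L[U]_non-basis} exhibits $\Pi_1(\{\kappa\})$-singletons that are moved by the iteration, so the ``reflection principle'' cannot be invoked for $d_0$ on those grounds. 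The repair is direct: each $M^{\Tt_U}_n$ is closed under $\mu$-sequences, so $i^{\Tt_U}_{0n}({<^*})={<^*}$ and $i^{\Tt_U}_{0n}(D)=D$, whence elementarity sends ``the $<^*$-least element of $D$'' to itself and $d_0$ is $[0,\om)$-stable. Two smaller points: in part \ref{item:wellorders_of_kappa} the bound on the supremum needs the $V$-cofinality of $\kappa^{+M^{\Tt_U}_\kappa}$ to exceed $\mu$ (the paper uses that it equals $\mu^+$); regularity of $\kappa^{+M^{\Tt_U}_\kappa}$ inside $M^{\Tt_U}_\kappa$ is not enough, since the $\mu$-many ordertypes form a set in $V$ rather than in $M^{\Tt_U}_\kappa$. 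Part \ref{item:functions_on_kappa,kappa^+} essentially matches the paper's argument.
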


\begin{proof}Part~\ref{item:no_wellorder_of_eta^om_uparrow}:
 Let $M_\alpha=M^{\Tt_U}_\alpha$, $i_{\alpha\beta}=i^{\Tt_U}_{\alpha\beta}$, and
  $\mu_\alpha=i_{0\alpha}(\mu)$.
 Suppose $<^*$ is a wellorder of $[\eta]^\om_{\uparrow}$,
 $\varphi$ is $\Sigma_1$,
 $\vec{p}\in (S_\om)^{<\om}$ and
 \begin{equation}\label{eqn:varphi_defines_<*}\all X,Y\in[\eta]^\om_{\uparrow}\ \Big[ X<^*Y\iff\varphi(\vec{p},X,Y)\Big].\end{equation}
 Write $<^*_n=i_{0n}({<^*})$.
 Since $i_{0n}(\vec{p},\eta)=(\vec{p},\eta)$ for all $n<\om$,
  $M_n\sats$ ``${<^*_n}$ is a wellorder of $[\eta]^\om_{\uparrow}$
 and line
 (\ref{eqn:varphi_defines_<*}) holds''.

 Since each $M_n$ is closed under $\mu$-sequences, we have $[\eta]^{\om}_{\uparrow}\cap M_n=[\eta]^{\om}_{\uparrow}$,
 and by the upward absoluteness of $\Sigma_1$, it follows that ${<}^*_{n}={<^*}$. So from now we just write ``${<^*}$'' instead of ``${<}^*_{n}$''.

 Let $\mu_n=\crit(i^{\Tt_U}_{n,n+1})$.
  Let $X_{\mathrm{even}}=\{\mu_{2n}\}_{n<\om}$ and $X_{\mathrm{odd}}=\{\mu_{2n+1}\}_{n<\om}$.
 Let $X_0=X_{\mathrm{even}}$.
 Let $X_1=\{\mu_0\}\cup X_{\mathrm{odd}}$.
 Let $X_2=\{\mu_0,\mu_1\}\cup X_{\mathrm{even}}$.
 In general let $X_{2k}=\{\mu_0,\ldots,\mu_{2k-1}\}\cup X_{\mathrm{even}}$
 and $X_{2k+1}=\{\mu_0,\ldots,\mu_{2k}\}\cup X_{\mathrm{odd}}$. Now $\crit(i_{n,n+1})=\mu_n$ and $i_{n,n+1}(\mu_k)=\mu_{k+1}$ for each $k\geq n$. Note that $X_i\in M_n$ for each $i,n<\om$, and that $i_{n+1,n+2}(X_{n+1})=X_n$.

 Since $X_{n+1}\neq X_n$, either $X_{n+1}<^*X_n$ or $X_n<^*X_{n+1}$. Since $<^*$ is wellfounded,
 there must therefore be some $n$ such that
 $X_n<^*X_{n+1}$.
 Fix such an $n$. Then letting $k_{n+1}=i_{n+1,n+2}$, since $k_{n+1}(X_{n+1})=X_n$, we have
  \begin{equation*}\label{eqn:label_for_eqn_16}M_{n+1}\sats\text{``}k_{n+1}(X_{n+1})<^*X_{n+1}\text{''}.\end{equation*}
 Applying $k_{n+1}$ to this statement
 gives (since $k_{n+1}(k_{n+1})=k_{n+2}$) that
 \begin{equation*}\label{eqn:label_for_eqn_17} M_{n+2}\sats\text{``}k_{n+2}(k_{n+1}(X_{n+1}))<^*k_{n+1}(X_{n+1})\text{''}.\end{equation*}
 Now applying $k_{n+2}$ to the latter statement yields (since $k_{n+2}(k_{n+2})=k_{n+3}$)
 \begin{equation*}\label{eqn:label_for_eqn_18} M_{n+3}\sats\text{``}k_{n+3}(k_{n+2}(k_{n+1}(X_{n+1})))<^*k_{n+2}(k_{n+1}(X_{n+1}))\text{''},\end{equation*}
 and so on.
 This produces an infinite descending sequence given by the sequence of images of $X_{n+1}$
 under these maps, a contradiction.

 Part~\ref{item:no_injection_kappa^+_to_p(kappa)_allowing_countable_cof}:
 Let us first consider the case that $C=\kappa^+$.
 Suppose some injection $f:\kappa^+\to\pow(\kappa)$ is $\Sigma_1(S_\kappa)$.
As in \cite{Sigma_1_def_at_higher_cardinals},
it follows that $f\in M^{\Tt_U}_\alpha$
for each $\alpha<\kappa$.
So as in \cite{Sigma_1_def_at_higher_cardinals},
if $\cof(\kappa)>\om$
then $M^{\Tt_U}_{\kappa}=\bigcap_{\alpha<\kappa}M^{\Tt_U}_\alpha$, so $f\in M^{\Tt_U}_\kappa$,
which is impossible since $\pow(\kappa)\cap M^{\Tt_U}_\kappa$ has cardinality $\kappa$.
But now we want to generalize this
to allow the possibility that $\cof(\kappa)=\om$.

Well, given $\xi<\kappa^+$, we can fix $\alpha<\kappa$
such that $\xi$ is $[\alpha,\kappa)$-stable. Then note that $i^{\Tt_U}_{\alpha\beta}(f(\xi))=f(\xi)$ for all $\beta\in[\alpha,\kappa)$.
Let  $f(\xi)^*=i^{\Tt_U}_{\alpha\kappa}(f(\xi))$ for (any/all) such $\alpha$.
So $f(\xi)^*\in M^{\Tt_U}_\kappa$.
Now note that $f(\xi)^*\cap\kappa=f(\xi)$, because if $\gamma<\kappa$ then $\gamma\in f(\xi)$
iff, letting $\alpha\in(\gamma,\kappa)$ with $f(\xi)^*=i^{\Tt_U}_{\alpha\kappa}(f(\xi))$,
we have $i^{\Tt_U}_{\alpha\kappa}(\gamma)\in f(\xi)^*=i^{\Tt_U}_{\alpha\kappa}(f(\xi))$,
but $\gamma<\alpha\leq\crit(i^{\Tt_U}_{\alpha\kappa})$,
so this is iff $\gamma\in f(\xi)^*$,
as desired.
So $f(\xi)\in M^{\Tt_U}_\kappa$
for all $\xi<\kappa^+$,
which as in \cite{Sigma_1_def_at_higher_cardinals}
is a contradiction.

Now in general fix a club $C\sub\kappa^+$ and suppose $f\sub\kappa^+\cross\pow(\kappa)$
is $\Sigma_1(S_\kappa)$ and $f\rest C:C\to\pow(\kappa)$ is an injective function.
Let $C_\alpha=i^{\Tt_U}_{0\alpha}(C)$;
this is also  club in $\kappa^+$.
So $C'=\bigcap_{\alpha<\kappa}C_\alpha$ is also.
But then for each $\beta\in C'$,
we get $f(\beta)\in M^{\Tt_U}_\kappa$
as before, which is again a contradiction.

Part~\ref{item:wellorders_of_kappa}:
 To see that each $\Delta_1(S_\kappa)$ subset of $\kappa$ is in $M^{\Tt_U}_\kappa$, use the proof of part~\ref{item:no_injection_kappa^+_to_p(kappa)_allowing_countable_cof}.
 Now each $\Sigma_1(S_\kappa)$ wellorder of $\kappa$ is in fact $\Delta_1(S_\kappa)$.
 It follows immediately
 that each such wellorder has length $<\kappa^{+M^{\Tt_U}_\kappa}$.
 But then the supremum of their lengths is also $<\kappa^{+M^{\Tt_U}_\kappa}$,
 since $\cof(\kappa^{+M^{\Tt_U}_\kappa})=\mu^+$,
 and we only have $\mu$-many parameters in the set $V_\mu\cup\{\kappa,\kappa^+\}$ (note that we are not considering arbitrary $\Sigma_1(S_\kappa)$ wellorders of $\kappa$ here).

 Part~\ref{item:functions_on_kappa,kappa^+}:
 Let $\varphi,\psi$ be $\Sigma_1$
 formulas and $\vec{p}\in(S_\kappa)^{<\om}$ be such that \begin{equation*}\label{eqn:label_for_eqn_19}\all\alpha\in\OR\ \Big[\varphi(\vec{p},\alpha)\iff\neg\psi(\vec{p},\alpha) \Big].\end{equation*}
 Then by elementarity, for all $\beta<\kappa$, $M^{\Tt_U}_\beta$ satisfies the same statement,
 and so by the upward absoluteness of $\Sigma_1$, we have
 \begin{equation*}\label{eqn:label_for_eqn_20} \all\alpha\in\OR\ \Big[\varphi(\vec{p},\alpha)\iff M^{\Tt_U}_\beta\sats\varphi(\vec{p},\alpha)\Big].\end{equation*}
 It now follows that
 \begin{equation*}\label{eqn:label_for_eqn_21} \all\alpha\in\OR\ \Big[\varphi(\vec{p},\alpha)\iff M^{\Tt_U}_\kappa\sats\varphi(\vec{q},\alpha^*)\Big] \end{equation*}
where $\vec{q}=i^{\Tt_U}_{0\kappa}(\vec{p})$ (here $\alpha^*$ denotes $*_\kappa(\alpha)$).
 The rest now follows easily from
  Lemma~\ref{lem:*_def}. Note that
  we make use of the parameter $({^\kappa}\beta)\cap M^{\Tt_U}_{\kappa}$ in order to compute $*_\kappa\rest\beta$, via that lemma.
\end{proof}

\subsection{Above Infinitely Many Measurables}\label{sec:wo_above_infinite_meas}

In \cite[Corollary 7.4]{Sigma_1_def_at_higher_cardinals},
L\"ucke and M\"uller showed assuming ZFC + $\kappa$ is a limit a of measurable cardinals:
\begin{enumerate}
\itemref{(i)}
no well ordering of $\pow(\kappa)$
is $\Sigma_1(V_\kappa\cup\{\kappa,\kappa^+\})$, and
\itemref[item:LM_Cor_7.4(ii)]{(ii)} if $\cof(\kappa)>\om$ then no injection $f:\kappa^+\to\pow(\kappa)$ is $\Sigma_1(V_\kappa\cup\{\kappa\})$.\footnote{Again, allowing the parameter $\kappa^+$ here does no harm.}
\end{enumerate}

In \cite[Theorem 1.4]{Sigma_1_def_at_higher_cardinals},
they also show, under ZFC, if $\kappa$ is a limit of measurables, $\cof(\kappa)=\om$ and there is a $\Sigma_1(\{\kappa\})$ wellorder $W$ of some set $D\sub\pow(\kappa)$ of cardinality $>\kappa$, then there is a $\bfSigma^1_3$ wellorder of the reals.

This led them to pose
 \cite[Question 10.4]{Sigma_1_def_at_higher_cardinals},
 which asks whether the following theory is consistent:
ZFC + ``there is a cardinal $\kappa$
which is a limit of measurable cardinals
and a family $D\sub\pow(\kappa)$
of cardinality $>\kappa$
and a wellorder  $<^*$ of $D$
such that $<^*$ is $\Sigma_1(V_\kappa\cup\{\kappa\})$''.
And \cite[Question 10.3]{Sigma_1_def_at_higher_cardinals} asks the same question, except with the extra demand that $\kappa$ have cofinality $\omega$.
The answers (to both) are ``no'',
as we show in Theorem~\ref{tm:above_inf_many_meas} below.

 We also sharpen their result~\ref{item:LM_Cor_7.4(ii)}
 above, by removing the hypothesis that $\cof(\kappa)>\om$,
and also by allowing arbitrary defining parameters in $\her_\kappa\cup\OR$;
we will also establish an ``injective on a club'' version.

L\"ucke and M\"uller  also showed in
 \cite[Theorem 1.1]{Sigma_1_def_at_higher_cardinals},
 assuming ZFC + $\kappa$ is a limit of measurable cardinals, that
 for every $\Sigma_1(V_\kappa\cup\{\kappa\})$ set $D\sub\pow(\kappa)$
 of cardinality $>\kappa$,
 there is a perfect embedding $\iota:{^{\cof(\kappa)}}\kappa\to\pow(\kappa)$ with $\rg(\iota)\sub D$. We will also generalize this theorem,
also by allowing any parameters in $V_\kappa\cup\OR$ to be used in defining $D$.

\begin{tm}\label{tm:above_inf_many_meas}
 Assume ZFC and $\kappa\in\OR$ is a limit of measurable cardinals.
 Then:
 \begin{enumerate}
  \item\label{item:every_Sigma_1_has_perfect_subset} For every $\Sigma_1(V_\kappa\cup\OR)$
  set $D\sub\pow(\kappa)$ of cardinality $>\kappa$,
  there is a perfect embedding $\iota:{^{\cof(\kappa)}\kappa}\to\kappa$ with $\rg(\iota)\sub D$.
  \item\label{item:no_Sigma_1_long_wellorder} There is no $\Sigma_1(V_\kappa\cup\OR)$ wellorder $<^*$ of a set $D\sub\pow(\kappa)$ with $D$ of cardinality $>\kappa$.
\item\label{item:no_injection_on_club_limit_of_meas}
 There are no $f,C$
 such that $f\sub\kappa^+\cross\pow(\kappa)$, $f$ is $\Sigma_1(V_\kappa\cup\OR)$,
 $C$ is club in $\kappa^+$ and $f\rest C$ is an injection.
 \end{enumerate}
\end{tm}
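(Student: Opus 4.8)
The plan is to study how $D$ (and the wellorder, resp.\ the graph $f$) behaves under linear iterations of measures lying below $\kappa$, exploiting that $\Sigma_1$ statements persist upward to wellfounded iterates and that, by Lemma~\ref{lem:ordinals_ev_stable_meas_limit}, such iterations can be arranged to fix any prescribed finite tuple of ordinals together with $\kappa$. First the common set-up. Since $\kappa$ is a limit of measurable---hence inaccessible---cardinals, $\kappa$ is a strong limit; in particular $\kappa$ is $\mu'$-closed for every $\mu'<\kappa$. Fix a $\Sigma_1$ formula and parameters defining the object in question, with the parameters written $\vec p,\vec\xi$ where $\vec p\in\her_\kappa\sub V_\delta$ for some $\delta<\kappa$ and $\vec\xi\in\OR^{<\om}$. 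By Lemma~\ref{lem:ordinals_ev_stable_meas_limit}, fix a measurable $\mu$ with $\delta<\mu<\kappa$ such that $i^\Tt$ fixes every component of $\vec\xi$ for every linear iteration of measures $\Tt$ on $V$ of successor length which is above $\mu$, based on $V_\kappa$, and satisfies $i^\Tt(\kappa)=\kappa$. For any such $\Tt$: $i^\Tt$ fixes $\vec p$ (as $\crit(i^\Tt)\geq\mu>\delta$), $\vec\xi$ and $\kappa$; and $M^\Tt_\infty$ is wellfounded (linear iterations of measures are wellfounded), so, identifying it with its transitive collapse, elementarity together with $\Sigma_1$-upward-absoluteness gives $i^\Tt(A)\in D$ for every $A\in D$, and $i^\Tt$ carries the wellorder (resp.\ $f$) into itself; moreover both the wellorder and its complement relative to $D\times D$ are $\Sigma_1$, so the wellorder is absolute between $V$ and any such $M^\Tt_\infty$ on images of $D$. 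Two families of such iterations are used: the length-$\kappa$ iterations $\Tt_W$ of a single measure $W$ on a measurable $\mu'\in(\mu,\kappa)$ --- here $i^{\Tt_W}_{0\kappa}(\kappa)=\kappa$ by Lemma~\ref{lem:many_U-it_fixed_points_above_meas} since $\kappa$ is $\mu'$-closed, and there are at most $\kappa$ of them as $\kappa$ is a strong limit --- and iterations built from measures on a sequence $\langle\mu_n\rangle$ of measurables cofinal in $\kappa$, so that the critical points are cofinal in $\kappa$, forcing $i^\Tt(\kappa)=\kappa$ (as in the proof of Lemma~\ref{lem:ordinals_ev_stable_meas_limit}).

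Part~\ref{item:every_Sigma_1_has_perfect_subset} is the heart of the matter. Using $|D|>\kappa$, first locate $A\in D$ that is not eventually fixed, in the sense that for every measurable $\mu'\in(\mu,\kappa)$ there is a measure $W$ on an appropriate image of $\mu'$ moving the corresponding image of $A$; such $A$ exists because, by Lemma~\ref{lem:charac_sets_in_M_eta_limit_eta} and the stability calculus of Lemma~\ref{lem:*_def}, the subsets of $\kappa$ fixed by all the relevant iterations form a collection of size $\leq\kappa$ (they are already determined by their restriction to the least measurable above $\mu$), whereas $|D|>\kappa$. Then build a tree $\langle\Tt_s\rangle$ of linear iterations of measures above $\mu$ indexed by a $\kappa$-perfect tree: at the stages corresponding to the $\mu_n$'s one iterates the available measure in many different ways (using that $A$ is moved there), at limit stages one takes direct limits, and along every branch $x$ the critical points are cofinal in $\kappa$, so $i^{\Tt_x}(\kappa)=\kappa$ and $i^{\Tt_x}$ fixes the parameters. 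Setting $\iota(x)=i^{\Tt_x}(A)$, one gets $\rg(\iota)\sub D$, $\iota$ is continuous by the direct-limit structure, and distinct branches yield distinct values since the discrepancy introduced at a splitting stage is never undone by later iterations (those being above the relevant critical points); so $\iota$ is a perfect embedding of ${}^{\cof(\kappa)}\kappa$. Lemma~\ref{lem:embedding_agreement} is used to make the copied iterations in the tree fit together coherently.

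Parts~\ref{item:no_Sigma_1_long_wellorder} and~\ref{item:no_injection_on_club_limit_of_meas} are then iteration arguments of the type used for Theorem~\ref{tm:above_one_measurable}, adapted to keep $\kappa$ fixed. For Part~\ref{item:no_Sigma_1_long_wellorder}: picking, as in Part~\ref{item:every_Sigma_1_has_perfect_subset}, $A\in D$ and $j=i^{\Tt_W}_{0\kappa}\colon V\to N$ with $j(A)\neq A$, and noting that $j$ maps $D$ into $D^N\sub D$ with $j$ fixing all parameters and the wellorder being $\Delta_1$-absolute on these images, one plays the iterates $\langle j^n(A)\rangle_{n<\om}$ against $<^*$: the comparison of $A$ with $j(A)$ propagates under $j$, and one orientation yields an infinite $<^*$-descending chain inside $D$, contradicting wellfoundedness; as in Theorem~\ref{tm:above_one_measurable}(\ref{item:no_wellorder_of_eta^om_uparrow}) the delicate point is choosing $A$ so that this is the orientation one gets. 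For Part~\ref{item:no_injection_on_club_limit_of_meas}: for each measure $W$ on a measurable in $(\mu,\kappa)$, the set $C_W=i^{\Tt_W}_{0\kappa}(C)$ and the club $E_W$ of fixed points $<\kappa^+$ of $i^{\Tt_W}_{0\kappa}$ are club in $\kappa^+$, since $\cof(\kappa^+)=\kappa^+$ exceeds the critical point; so $C'=\bigcap_W(C_W\cap E_W)$ is club, the intersection running over $\leq\kappa$ sets. For $\beta\in C'$ one checks $\beta\in C$ (from $\beta\in C_W$ and $i^{\Tt_W}_{0\kappa}(\beta)=\beta$) and then, applying $i^{\Tt_W}_{0\kappa}$ to $(\beta,f(\beta))\in f$ and using that $f\rest C$ is single-valued at $\beta$, that $f(\beta)=i^{\Tt_W}_{0\kappa}(f(\beta))\in M^{\Tt_W}_\kappa$; hence $\rg(f\rest C')\sub\bigcap_W\pow(\kappa)^{M^{\Tt_W}_\kappa}$, which has size $\leq\kappa$ by the same counting of fixed sets used in Part~\ref{item:every_Sigma_1_has_perfect_subset} (this is where $\cof(\kappa)=\om$ is absorbed, via stability rather than $M^{\Tt_W}_\kappa=\bigcap_{\alpha<\kappa}M^{\Tt_W}_\alpha$). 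But $|\rg(f\rest C')|=\kappa^+$, a contradiction.

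The anticipated main obstacle is in Part~\ref{item:every_Sigma_1_has_perfect_subset}: assembling the tree of iterations into a genuine perfect embedding --- ensuring that the branching over the $\cof(\kappa)$-many relevant levels is rich enough to cover ${}^{\cof(\kappa)}\kappa$, that every branch iteration fixes $\kappa$ and all parameters, that distinct branches give distinct images, and that the construction is coherent (via Lemma~\ref{lem:embedding_agreement}). In Parts~\ref{item:no_Sigma_1_long_wellorder} and~\ref{item:no_injection_on_club_limit_of_meas} the only delicate points are the correct choice of the element of $D$ to be iterated and the case $\cof(\kappa)=\om$, both handled by the stability machinery of \S2.1 in place of intersecting a chain of iterates.
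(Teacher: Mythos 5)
Parts \ref{item:every_Sigma_1_has_perfect_subset} and \ref{item:no_injection_on_club_limit_of_meas} of your proposal are essentially the paper's argument: the same use of Lemma \ref{lem:ordinals_ev_stable_meas_limit} to stabilize the ordinal parameters, the same counting argument to find $X\in D$ not of the form $i^{\Tt_{U_\alpha}}_{0\kappa}(X\cap\kappa_\alpha)$, the same tree of iterations along a cofinal sequence of measurables for the perfect embedding, and for part \ref{item:no_injection_on_club_limit_of_meas} a re-proof of Theorem \ref{tm:above_one_measurable}(\ref{item:no_injection_kappa^+_to_p(kappa)_allowing_countable_cof}) in this setting (the paper simply cites that result together with Lemma \ref{lem:ordinals_ev_stable_meas_limit}).

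Part \ref{item:no_Sigma_1_long_wellorder} has a genuine gap. Your scheme is: fix a single iteration map $j$ with $j(A)\neq A$ for some $A\in D$, and play the forward iterates $\left<j^n(A)\right>_{n<\om}$ against $<^*$. This only produces a contradiction when $j(A)<^*A$; if $A<^*j(A)$ you get an ascending chain, which is harmless. You acknowledge this and say the delicate point is ``choosing $A$ so that this is the orientation one gets'' --- but that is not something you can choose: for all you know, every $A\in D$ and every admissible $j$ satisfy $A<^*j(A)$. In Theorem \ref{tm:above_one_measurable}(\ref{item:no_wellorder_of_eta^om_uparrow}) the bad orientation is \emph{forced} by constructing sets $X_n$ from the critical sequence with $k_{n+1}(X_{n+1})=X_n$, so that wellfoundedness guarantees some $n$ with $k_{n+1}(X_{n+1})<^*X_{n+1}$; but those sets live in $[\eta]^\om_\uparrow$, the entire field of that wellorder, and there is no reason for their analogues to lie in $D$ here. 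The paper's resolution is to replace powers of a single $j$ by the commuting family $\{\Uu_A\}$ indexed by cofinal subsets $A$ of the critical sequence: wellfoundedness first forces $X<^*i_A(X)$ for each relevant $A$ (since the opposite orientation dies by forward iteration, as in your argument), this relativizes to the iterates via the factor maps $i_{AB}:M_A\to M_B$ with $i_{AB}(i_A(X))=i_B(X)$ for $B\psub A$, and then an \emph{increasing} chain $A_0\psub A_1\psub\cdots$ of index sets yields the \emph{descending} chain $\cdots<^*i_{A_1}(X)<^*i_{A_0}(X)$. That second mechanism --- turning the apparently harmless orientation into a contradiction --- is exactly what is missing from your sketch, and it cannot be supplied by iterating one embedding.
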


\begin{rem}\label{rem:3.4}
 This theorem also yields a direct proof of the ``(i) implies (ii)'' part of \cite[Theorem 1.5]{Sigma_1_def_at_higher_cardinals};
 in fact we have direct implication in that direction, instead of just relative consistency.
\end{rem}

\begin{proof}[Proof of Theorem~\ref{tm:above_inf_many_meas}]

Part~\ref{item:no_injection_on_club_limit_of_meas}: By Theorem~\ref{tm:above_one_measurable}
 part~\ref{item:no_injection_kappa^+_to_p(kappa)_allowing_countable_cof} combined with Lemma~\ref{lem:ordinals_ev_stable_meas_limit}.

Part~\ref{item:no_Sigma_1_long_wellorder}:
 Let $\varphi$
be  $\Sigma_1$ and $\vec{p}\in(V_\kappa\cup\OR)^{<\om}$ and suppose  $D=\{X\in\pow(\kappa)\bigm|\varphi(\vec{p},X)\}$
has cardinality $>\kappa$.
Let $\eta=\cof(\kappa)\leq\kappa$.
Fix a strictly increasing sequence $\left<\kappa_\alpha\right>_{\alpha<\eta}$ of measurable cardinals cofinal in their supremum $\kappa$,
such that:
\begin{enumerate}
\item[(i)]
 $\eta<\kappa_0$ if $\eta<\kappa$,
\item[(ii)] $\vec{p}\cap V_\kappa\in V_{\kappa_0}$,
\itemref[item:stable_tree]{(iii)} $i^\Tt(\xi)=\xi$
for each ordinal $\xi\in\vec{p}$
whenever $\Tt$
is a linear iteration of measures on $V$
of successor length $\lambda+1\leq\kappa+1$,
 is based on $V_\kappa$, is
 $\kappa$-weakly normal, and is above $\kappa_0$.
\end{enumerate}
(Use Lemma~\ref{lem:ordinals_ev_stable_meas_limit} to see there is such a $\kappa_0$.)
 Fix a sequence $\left<U_\alpha\right>_{\alpha<\eta}$ of $\kappa_\alpha$-complete measures $U_\alpha$ on $\kappa_\alpha$.

Fix $X\in D$
such that
 $X\neq i^{\Tt_{U_\alpha}}_{0\kappa}(X\cap\kappa_\alpha)$
 for all $\alpha<\eta$. Such an $X$ exists
 because $D$ has cardinality ${>\kappa}$,
 but there are only $\kappa$-many
 sets $Y\sub\kappa$ such that $Y=i^{\Tt_{U_\alpha}}_{0\kappa}(Y\cap\kappa_\alpha)$ for some $\alpha<\kappa$.
By thinning out the sequence of measurables, we may assume that for all $\alpha<\eta$, we have \begin{equation}\label{eqn:U_alphas_not_cohere_X} i^{\Tt_{U_\alpha}}_{0\kappa_{\alpha+1}}(X\cap\kappa_{\alpha})\neq X\cap\kappa_{\alpha+1}.\end{equation}

Let $A\sub\{\kappa_\alpha\}_{\alpha<\eta}$ be cofinal in $\kappa$.
Write $A=\{\kappa^A_\alpha\}_{\alpha<\eta}$ with $\kappa^A_\alpha<\kappa^A_\beta$
for $\alpha<\beta$.
Then define the iteration $\Uu_A$
to be that iteration $\Uu$ using  measures from $\{U_\alpha\}_{\alpha<\eta}$ and its pointwise images, with $i^{\Uu}(\kappa_\alpha)=\kappa^A_\alpha$
for each $\alpha<\eta$,
and $\crit(E^\Uu_\alpha)<\crit(E^\Uu_\beta)$ for $\alpha<\beta<\eta$
(it is easy to see that this uniquely determines $\Uu$).
Then $\Uu_A$  is of the kind mentioned in clause~\ref{item:stable_tree} above, so $i^{\Uu_A}(\vec{p})=\vec{p}$.
Let $M_A=M^{\Uu_A}_\infty$, which is wellfounded;
let $i_A=i^{\Uu_A}:V\to M_A$. Since $\varphi$ is $\Sigma_1$,
it follows that $i_A(X)\in D$.

Now we may assume that $<^*$ is a $\Sigma_1(\{\vec{p}\})$ wellorder of $D$. Let $\psi$ be a $\Sigma_1$
formula such that for all $X,Y\in\pow(\kappa)$, we have $X<^*Y\iff\psi(\vec{p},X,Y)$.

Continuing with $A$ as above,
suppose that $A\cap\{\kappa_n\}_{n<\om}\psub\{\kappa_n\}_{n<\om}$
but $A\cap\{\kappa_n\}_{n<\om}$ is infinite. Letting $n$ be least such that $\kappa_n\notin A$,
so $\crit(i_A)=\kappa_n$
and $\kappa_{n+1}\leq i_A(\kappa_n)$,
note that $X\neq i_A(X)$,
and in fact $X\cap\kappa_{n+1}\neq i_A(X)\cap\kappa_{n+1}$.
A slight generalization of this also shows that if  $A'$ is likewise and $A\cap\{\kappa_n\}_{n<\om}\neq A'\cap\{\kappa_n\}_{n<\om}$
then $i_A(X)\neq i_{A'}(X)$.

Now we claim that if $A\cap\{\kappa_n\}_{n<\om}\psub\{\kappa_n\}_{n<\om}$ and $A\cap\{\kappa_n\}_{n<\om}$ is infinite then $X<^*i_A(X)$.
For otherwise $i_A(X)<^*X$,
so $\psi(\vec{p},Y,X)$
where $Y=i_A(X)$.
We can apply $i_A$ to this statement, which since $i_A(\vec{p})=\vec{p}$, gives
\begin{equation*}\label{eqn:label_for_eqn_22} M_A\sats\psi(\vec{p},Y',X') \end{equation*}
where $Y'=i_A(Y)=i_A(i_A(X))$ and $X'=i_A(X)$. So in fact $\psi(\vec{p},Y',X')$ holds,
so $i_A(i_A(X))<^*i_A(X)$.
But then we can also apply $i_A$ to the latter statement, and so on, giving a descending $\omega$-sequence through $<^*$, a contradiction.

It similarly follows that if $A,B$ are as above and $\{\kappa_\alpha\}_{\om\leq\alpha<\eta}\sub A\cap B$ and $B\cap\{\kappa_n\}_{n<\om}\psub A\cap\{\kappa_n\}_{n<\om}$,
then $i_A(X)<^*i_B(X)$.
(We have $B\cap\{\kappa_n\}_{n<\om}\in M_A$,  and working in $M_A$, we
can define $M_B$ and the factor map $i_{AB}:M_A\to M_B$, and get $i_{AB}(i_A(X))=i_B(X)$, from which we get $i_A(X)<^*i_B(X)$ much as before. Here is some more detail, assuming $\eta=\om$ to ignore irrelevant details. We have $B\in M_A$,
 and $B\sub i_A(\{\kappa_n\}_{n<\om})=\{\kappa^A_n\}_{n<\om}$,
and so we can define $(M_B)^{M_A}$ and $(i_B)^{M_A}$ in $M_A$;
that is, we apply the  preceding definitions in $M_A$, working from the parameter $i_A(\{\kappa_n\}_{n<\om})=\{\kappa^A_n\}_{n<\om}$, to define $(M_B)^{M_A}$ and $(i_B)^{M_A}$.
A simple instance of normalization\footnote{See for example \cite{iter_for_stacks}, \cite{ACPFMP}; but here things are much simpler.}  yields that $(M_B)^{M_A}=M_B$ and $i_B=(i_B)^{M_A}\com i_A$. Writing $i_{AB}=(i_B)^{M_A}$, we then have $i_{AB}:M_A\to M_B$ and $i_{AB}(i_A(X))=i_B(X)$.)

But now let $B=\{\kappa_{2n}\}_{n<\om}\cup\{\kappa_\alpha\}_{\om\leq\alpha<\eta}$,
and consider $A_n=B\cup\{\kappa_1,\ldots,\kappa_{2n-1}\}$.
Then $A_n\psub A_{n+1}$ for all $n$,
so $i_{A_{n+1}}(X)<^*i_{A_n}(X)$
for all $n$, giving a strictly descending $\om$-sequence through $<^*$, a contradiction.

Part~\ref{item:every_Sigma_1_has_perfect_subset} follows readily from the methods
 of \cite[Theorem 1.1]{Sigma_1_def_at_higher_cardinals},
 combined with the selection of $\{\kappa_\alpha\}_{\alpha<\eta}$ and the set $X$ as above.
For self-containment, here is the construction. For $f:\lambda\to\kappa$
where $0\leq\lambda\leq\eta$,
we will define a successor length iteration of measures
 $\Uu_f$, continuously in $f$, meaning that
 \begin{equation*}\label{eqn:label_for_eqn_23} g\ins f\implies \Uu_g\ins\Uu_f \end{equation*}
 and if $\lambda$ is a limit ordinal then
 $\Uu_f$ is just the successor length iteration of minimal length extending $\Uu_g$ for all $g\pins f$. We will have
 $i^{\Uu_f}(\vec{p})=\vec{p}$
and
$i^{\Uu_f}(\kappa)=\kappa$,
as a consequence of Lemma~\ref{lem:ordinals_ev_stable_meas_limit}.
We will then define $X_f=i^{\Uu_f}(X)\sub\kappa$,
and therefore have  $X_f\in D$. We will choose these iterations such that for distinct $f, g:\eta\to\kappa$,
we have $X_f\neq X_g$,
and in fact letting $\lambda$ be least such that $f(\lambda)\neq g(\lambda)$, say $f(\lambda)<g(\lambda)$, we will have
$i^{\Uu_f}(\kappa_\lambda)<i^{\Uu_g}(\kappa_\lambda)$ and
\begin{equation}\label{eqn:X_f_disagree_X_g} X_f\cap i^{\Uu_g}(\kappa_\lambda)\neq X_g\cap i^{\Uu_g}(\kappa_\lambda).\end{equation}
The iteration $\Uu_f$ will just use the measures $U_\lambda$ and their images, for $\lambda\in\zeta=\dom(f)$,
first iterating $U_0$ and its images for some length, then the image of $U_1$ and its further images for some length, and so on.
The length through which we iterate the image of $U_\lambda$
will be determined by $f(\lambda)$
and the goal of  arranging line~(\ref{eqn:X_f_disagree_X_g}).

We start with $\Uu_\emptyset$ being the trivial iteration.
Say $\dom(f)=\lambda<\eta$
and we have defined $\Uu_{f}$.
We now define $\Uu_{f\conc\left<\beta\right>}$ for each $\beta<\kappa$, by iterating with images of $i^{\Uu_f}(U_\lambda)$.
That is, for each $\beta<\kappa$,
$\Uu_{f\conc\left<\beta\right>}$
will have form $\Uu_{f}\conc\Tt_\beta$,
where $\Tt_\beta$ is the linear iteration of $M^{\Uu_f}_\infty$
of length a certain ordinal $\zeta_\beta+1$,
with $E^{\Tt_\beta}_\gamma=i^{\Tt_\beta}_{0\gamma}(i^{\Uu_f}(U_\lambda))$ for each $\gamma<\zeta_\beta$.
It just remains to specify $\zeta_\beta$,
which we do by recursion on $\beta$.
We set $\zeta_0=0$,
and for limit $\gamma$, $\zeta_\gamma=\sup_{\delta<\gamma}\zeta_\delta$. Finally, given $\zeta_\beta$ where $\beta<\kappa$,
let $\zeta_{\beta+1}=i^{\Tt_\beta}_{0\zeta_\beta}(i^{\Uu_f}_{0\infty}(\kappa_{\lambda+1}))$.

This completes the construction of $\Uu_f$.
As stated earlier, we define $X_f=i^{\Uu_f}(X)$. We claim that $\iota:f\mapsto X_f$ is a perfect embedding
 $\iota:{^\eta}\kappa\to\pow(\kappa)$
 with $\range(\iota)\sub D$.
For $i^{\Uu_f}(\kappa)=\kappa$ for each $f:\eta\to\kappa$,
and clause~\ref{item:stable_tree} above applies to $\Uu_f$,
so $i^{\Uu_f}(\vec{p})=\vec{p}$,
and as before, using Lemma~\ref{lem:ordinals_ev_stable_meas_limit}, it follows that $X_f\in D$.
Now let us see that line~(\ref{eqn:X_f_disagree_X_g})
holds, where $f,g,\lambda$ are as there.
Let $h=f\rest\lambda=g\rest\lambda$.
So $\Uu_f$, $\Uu_g$ each extend $\Uu_h$,
with $\Uu_f$ extending $\Uu_{h\conc\left<f(\lambda)\right>}$
and $\Uu_g$ extending $\Uu_{h\conc\left<g(\lambda)\right>}$.
But note that
\begin{equation*}\label{eqn:label_for_eqn_24} i^{\Tt_{f(\lambda)}}(i^{\Uu_h}(\kappa_{\lambda+1}))=i^{\Tt_{f(\lambda)+1}}(\kappa_\lambda)\leq i^{\Tt_{g(\lambda)}}(\kappa_\lambda), \end{equation*}
but by line~(\ref{eqn:U_alphas_not_cohere_X}),
\begin{equation}\label{eqn:stage_distinction} i^{\Tt_{f(\lambda)}}(i^{\Uu_h}(\kappa_{\lambda+1}))\cap i^{\Tt_{f(\lambda)}}(i^{\Uu_h}(X))\neq i^{\Tt_{f(\lambda)+1}}(i^{\Uu_h}(\kappa_\lambda))\cap i^{\Tt_{g(\lambda)}}(i^{\Uu_h}(X)). \end{equation}
Moreover, the tails of the iterations  have critical points at least $i^{\Tt_{f(\lambda)}}(i^{\Uu_h}(\kappa_{\lambda+1}))$,
so the distinction  in line~(\ref{eqn:stage_distinction})
leads to  line~(\ref{eqn:X_f_disagree_X_g}).
\end{proof}

\subsection{$I_2$-embeddings}
\label{sec:above_rank-to-rank}

In this section we will prove two facts
(in Theorem~\ref{tm:3.11}),
analogous to parts of Theorem~\ref{tm:above_inf_many_meas},
but with iterations of $I_2$-embeddings replacing
iterations of many measurables. The arguments
here will be analogous to some of those
in the earlier sections.

Recall that $I_2$ holds at $\lambda$
iff there is an elementary $j:V\to M$
with $M$ transitive, $\crit(j)=\kappa<\lambda$, $j(\lambda)=\lambda$
and $V_\lambda\sub M$. Since we are working in ZFC, this implies that $\lambda$ is a limit ordinal, and in fact that $\lambda=\sup_{n<\om}\kappa_n$, where $\left<\kappa_n\right>_{n<\om}$
is the critical sequence of $j$; that is, $\kappa_0=\kappa$ and $\kappa_{n+1}=j(\kappa_n)$.
For a limit ordinal $\lambda$,
let's say that $E$ is an \emph{$I_2(\lambda)$-extender}
iff $E$ is a rank-to-rank extender over $V_\lambda$ and $\Ult(V,E)$ is wellfounded. Then $I_2$ holds at $\lambda$ iff there is an $I_2(\lambda)$-extender.
Let's say that $(\lambda,E)$ is an \emph{$I_2$-pair}
iff $\lambda$ is a limit ordinal and $E$ is an $I_2(\lambda)$-extender. Note that in the following theorem,
$\Ult(N,G)$ denotes the ultrapower formed using
only functions in $N$, and $i^N_G:N\to\Ult(N,G)$ is the associated ultrapower embedding.

We will need the following analogue of Lemma \ref{lem:embedding_agreement}:

\begin{tm}\label{tm:natural_isormorphism_I2}
Assume ZFC.
Let $(\lambda,E)$ be an $I_2$-pair
and $M=\Ult(V,E)$.
Let $F$ be a rank-to-rank extender over $V_\lambda$.
Then:
\begin{enumerate}
\item $\Ult(V,F)$ is wellfounded iff $\Ult(M,F)$
is wellfounded.
\item $i^V_F(M)=\Ult(M,F)$
and $i^V_F\rest M=i^M_F$.
\item\label{item:function_correspondence} For every $\gamma<\lambda$, every $f:[\gamma]^{<\om}\to M$
and every $a\in [i^{V_\lambda}_F(\gamma)]^{<\om}$,
there is  $X\in F_a$
such that $f\rest X\in M$.
\end{enumerate}
\end{tm}

\begin{proof}
It is clearly enough to prove part \ref{item:function_correspondence}.
So fix $(\gamma,f,a)$ as there.
Let $\left<\kappa_n\right>_{n<\om}$
be the critical sequence
of $i^V_E$.
For $z\in M$, let $n_z$ be the least $n<\om$
such that there is $g:[\kappa_n]^{<\om}\to V$
and $b\in[i^V_E(\kappa_n)]^{<\om}$
such that $z=[b,g]^V_E=i^V_E(g)(b)$.
Since $F_a$ is countably complete,
we can fix $X\in F_a$ and $n<\om$
such that $n_{f(x)}=n$ for all $x\in X$.
We claim this $X$ works; that is, we have $f\rest X\in M$.
To see this, for $x\in X$, let $g_x:[\kappa_n]^{<\om}\to V$
and $b_x\in[i^V_E(\kappa_n)]^{<\om}$
be such that $x=[b_x,g_x]^{V}_E=i^V_E(g_x)(b_x)$.
Let $G:X\to V$
be $G(x)=g_x$
and $B:X\to V$
be $B(x)=b_x$.
Then $i^V_E(G)\in M$, $X\in V_\lambda\sub M$, $B\in V_\lambda\sub M$
and
 $k=i^V_E\rest[\kappa_n]^{<\om}\in V_\lambda\sub M$.
 But note that for $x\in X$, we have
 \begin{equation*}\label{eqn:added_label_for_eqn_1}  f(x) = i^V_E(g_x)(b_x)=i^V_E(G)(k(x))(B(x)), \end{equation*}
 and since $i^V_E(G),k,B,X\in M$, therefore $f\rest X\in M$.
\end{proof}

\begin{lem}\label{lem:linear_length_om_rank-to-rank_iterations_eventually_fix_ordinal}
Assume ZFC and let $\lambda$ be a limit ordinal. Then:
\begin{enumerate}
\item\label{item:linear_length_om_I_2_tree_ev_fix_ordinal}Let $\Tt$ be a length $\om$ linear iteration of $V$,
such that for each $n<\om$, $E^\Tt_n$ is a rank-to-rank
extender over $V_\lambda^{M^\Tt_n}$ \tu{(}thus, $V_\lambda^{M^\Tt_n}=V_\lambda$\tu{)}. Let $k_n:V\to\Ult(V,E^\Tt_n)$ be the ultrapower map \tu{(}note that $\dom(k_n)=V$, whereas $\dom(i^{\Tt}_{n,n+1})=M^\Tt_n$\tu{)}. Then for each ordinal $\xi$, there is $n<\om$ such that $k_m(\xi)=\xi$ for all $m\in[n,\om)$.
\item\label{item:linear_length_om_I_2_tree_ev_fix_ordinal_case_iterate_j} Let $E$ be an $I_2(\lambda)$-extender,
$j_0:V\to M=\Ult(V,E)$ the ultrapower map,
and  $j_{n+1}=j_n(j_n)=j_0(j_n)$,
 $E_{n+1}=j_n(E_n)=j_0(E_n)$, and $j_n^+=k_n:V\to \Ult(V,E_n)$
 be the ultrapower map. Then for each $\xi\in\OR$,
 there is $n<\om$ such that $j_m^+(\xi)=\xi$ for all $m\in[n,\om)$.
 \end{enumerate}
\end{lem}
\begin{proof}
Part \ref{item:linear_length_om_I_2_tree_ev_fix_ordinal}:
Because $\Tt$ is linear, the direct limit model $M^\Tt_\om$ is wellfounded.
Therefore there is $n<\om$ such that for all $m\in[n,\om)$,
we have $i^\Tt_{m,m+1}(\xi)=\xi$.
But by Theorem~\ref{tm:natural_isormorphism_I2}, $k_m\rest\OR=i^\Tt_{m,m+1}\rest\OR$, which suffices.

Part \ref{item:linear_length_om_I_2_tree_ev_fix_ordinal_case_iterate_j} is just a special case of part \ref{item:linear_length_om_I_2_tree_ev_fix_ordinal}.
\end{proof}

\begin{dfn}\label{dfn:appropriate}
Let $f:\om\to\om$. We say that $f$ is \emph{appropriate}
if $f(n)>0$ for infinitely many $n$.
\end{dfn}
\begin{dfn}\label{dfn:Tt_Ef}
Assume ZFC and let $(\lambda,E)$ be an $I_2$-pair.
Let $E_0=E$ and $E_{n+1}=i^V_{E_n}(E_n)=i^V_E(E_n)$.
Let $f:\om\to\om$ be  appropriate.
The \emph{$(E,f)$-iteration}, which we denote $\Tt_{Ef}$, is the length $\om+1$ linear iteration $\Tt$ on $V$ where
we use $E_0$ and its images $f(0)$ times,
then images of $E_1$ $f(1)$ times, etc.
That is, for all $k<f(0)$, $E^{\Tt}_k=i^\Tt_{0k}(E_0)$,
and more generally, for all $n<\om$ and all $k<f(n)$,
letting $m=f(0)+\ldots+f(n-1)$, we have
$E^\Tt_{m+k}=i^\Tt_{0,m+k}(E_n)$.
\end{dfn}

Note that if $\Tt=\Tt_{Ef}$
then $i^\Tt_{0\om}(\kappa_n)=\kappa_\ell$ where  $\ell=n+f(0)+\ldots+f(n)$, and therefore $i^\Tt_{0\om}(\lambda)=\lambda$
and $i^\Tt_{0\om}``\lambda\sub\lambda$.

We next establish an analogue of Lemma \ref{lem:ordinals_ev_stable_meas_limit}:

\begin{lem}\label{lem:I_2_stable_xi_with_infinite_iters}
Assume ZFC and let $(\lambda,E)$ be an $I_2$-pair.
Then for each $\xi\in\OR$
there is $n<\om$ such that for all appropriate
$f:\om\to\om$, if $f(k)=0$ for all $k< n$ then $i^{\Tt_{Ef}}_{0\om}(\xi)=\xi$.
\end{lem}
\begin{proof}
This is like the proof of Lemma \ref{lem:ordinals_ev_stable_meas_limit},
with Lemma \ref{lem:I_2_stable_xi_with_infinite_iters} playing the role
of the ``simple version''
of Lemma \ref{lem:ordinals_ev_stable_meas_limit}, which was
established first and used as a lemma in the proof of the full version of Lemma \ref{lem:ordinals_ev_stable_meas_limit}.

That is, fix $\xi$, and let $n$ witness Lemma~\ref{lem:linear_length_om_rank-to-rank_iterations_eventually_fix_ordinal} part~\ref{item:linear_length_om_I_2_tree_ev_fix_ordinal_case_iterate_j} with respect to $E,\xi$. We claim that $n$ also witnesses the lemma under consideration for $\xi$. For let $f:\om\to\om$
be appropriate with $f(k)=0$ for all $k<n$. Let $\Tt=\Tt_{Ef}$. We want to see that $i^{\Tt}_{0\om}(\xi)=\xi$. For this, we will define an order-preserving
map $\pi:i^\Tt_{0\om}(\xi)\to\xi$ (actually, $\pi$ will be the identity).

Now we have $i^{\Tt}_{0k}(\xi)=\xi$
for all $k<\om$, by choice of $n$ and a straightforward induction through these finite stages. So let $k<\om$
and $\eta<\xi$; we want to define $\pi(i^\Tt_{k\om}(\eta))$. Let $n'<\om$ witness Lemma \ref{lem:linear_length_om_rank-to-rank_iterations_eventually_fix_ordinal} part \ref{item:linear_length_om_I_2_tree_ev_fix_ordinal_case_iterate_j} applied in $M^\Tt_k$,
to $E'=E^\Tt_k$ and $\eta$. Let $E'_0=E'$ and $E'_{m+1}=E'_m(E'_m)$.
Then note that for each $\ell\in[k,\om)$, $E^\Tt_{\ell}=i^\Tt_{k\ell}(E'_m)$ for some $m=m_\ell$,
and moreover, there is $\ell$ such that for all $\ell'\in[\ell,\om)$, we have $m_{\ell'}\geq n'$.
Fixing this $\ell$, we define $\pi(i^\Tt_{k\om}(\eta))=i^\Tt_{k\ell}(\eta)$.
The fact that this is order-preserving follows easily
from the fact that
 $i^\Tt_{\ell\ell'}(i^\Tt_{k\ell}(\eta))=i^\Tt_{k\ell}(\eta)$ for all $\ell'\in[\ell,\om)$.
 (Moreover, $\pi$ is in fact the identity,
 because for every $\eta<\xi$,
 there is $\ell$ such that $i^\Tt_{\ell\ell'}(\eta)=\eta$
 for all $\ell'\in[\ell,\om)$.)
\end{proof}

\begin{dfn}\label{dfn:simply_perfect}
For an uncountable cardinal $\lambda$, letting $\theta=\cof(\lambda)$,
a \emph{simply perfect function}
(for ${^\lambda}2$)
is a function $\iota:{^{<\theta}}2\to{^{<\lambda}}2$
such that for all $s,t\in{^{<\theta}}2$,
if $s\psub t$ then $\iota(s)\psub \iota(t)$,
$\iota(s\conc\left<0\right>)\not\sub\iota(s\conc\left<1\right>)\not\sub\iota(s\conc\left<0\right>)$, and $\dom\big(\bigcup_{\alpha<\theta}\iota(x\rest\alpha)\big)=\lambda$
for all $x\in{^\theta}2$.
A \emph{simply perfect subset}
of ${^{\lambda}}2$
is the range of a simply perfect function.
\end{dfn}

We can now proceed to the main result of this section, Theorem \ref{tm:3.11}.
Its part \ref{item:I_2_PSP} should be compared with
\cite[Theorems 1.4, 3.1]{descriptive_properties_I2-embeddings}, which are related.
Note that in terms of the parameters allowed, Theorem \ref{tm:3.11}(\ref{item:I_2_PSP}) is orthogonal to those results, in that \cite{descriptive_properties_I2-embeddings}
handles parameters in $V_\lambda\cup\{V_\lambda\}\cup P$ for a certain collection $P\sub V_{\lambda+1}$, whereas we handle parameters in $V_\lambda\cup\{V_\lambda\}\cup\OR$.\footnote{In the first arXiv preprint of the present paper, \S\ref{tm:above_inf_many_meas}
was very brief, and the argument contained a significant error and gap. The author noticed this in February 2026,
and added \ref{tm:natural_isormorphism_I2},
\ref{lem:linear_length_om_rank-to-rank_iterations_eventually_fix_ordinal},
\ref{dfn:Tt_Ef}, \ref{lem:I_2_stable_xi_with_infinite_iters},
part \ref{item:I_2_Sigma_1_singletons} of \ref{tm:3.11}, and corrected the proof of \ref{tm:3.11}
(and added the proof of  part \ref{item:I_2_Sigma_1_singletons}).}

\begin{tm}\label{tm:3.11}
 Assume ZFC + $I_2(\lambda)$.
 Let $D\sub\pow(\lambda)$
 have cardinality $>\lambda$.
 Then:
 \begin{enumerate}
  \item\label{item:I_2_Sigma_1_singletons}
 Let $A\sub V_\lambda$
 be such that $\{A\}$ is $\Sigma_1(V_\lambda\cup\{V_\lambda\}\cup\OR)$. Then for every $I_2(\lambda)$-extender $E$,
 there is $n<\om$ such that $i^V_{E_n}(A)=A$,
 where $E_n$ is as in Lemma \ref{lem:linear_length_om_rank-to-rank_iterations_eventually_fix_ordinal} part \ref{item:linear_length_om_I_2_tree_ev_fix_ordinal_case_iterate_j}. Therefore there are only $\lambda$-many such sets $A$.
 \item\label{item:I_2_no_big_wellorder} There is no
 $\Sigma_1(V_\lambda\cup\{V_\lambda\}\cup\OR)$
 wellorder of $D$.
 \item\label{item:I_2_PSP} If $D$
 is  $\Sigma_1(V_\lambda\cup\{V_\lambda\}\cup\OR)$
 then $D$ has a simply perfect subset.
 \end{enumerate}
\end{tm}
The crucial difference
between this and the results on measurable cardinals is that we also have $V_\lambda$ itself available as a parameter.
\begin{proof}
Part \ref{item:I_2_Sigma_1_singletons}:
Suppose $A$ is the unique set such that $\varphi(\vec{p},A)$,
where $\varphi$ is $\Sigma_1$ and $\vec{p}\in(V_\lambda\cup\{V_\lambda\}\cup\OR)^{<\om}$.
Let $E$ be an $I_2(\lambda)$-extender.
Let $n^*<\om$ witness Lemma~\ref{lem:linear_length_om_rank-to-rank_iterations_eventually_fix_ordinal} part~\ref{item:linear_length_om_I_2_tree_ev_fix_ordinal_case_iterate_j} for $E$, simultaneously for all ordinals $\xi\in\vec{p}$,
and with $\vec{p}\cap V_\lambda\sub V_{\crit(E_{n^*})}$.
Then $n^*$ is as desired, since $i_{E_{n^*}}^V(\vec{p})=\vec{p}$,
so $\Ult(V,E_{n^*})\sats\varphi(\vec{p},i^V_{E_{n^*}}(A))$,
so by $\Sigma_1$-upward absoluteness, $V\sats\varphi(\vec{p},i^V_{E_{n^*}}(A))$, and
so $i^V_{E_{n^*}}(A)=A$.

The fact that there are only $\lambda$-many such
sets $A$ follows, because $A$ is determined by $A\cap V_{\kappa_{n^*}}$ and $E$,
since $A\cap V_{\kappa_{n^*+1}}=i^V_{E_{n^*}}(A\cap V_{\kappa_{n^*}})$, and  $A\cap V_{\kappa_{n^*+2}}=i^V_{E_{n^*}}(A\cap V_{\kappa_{n^*+1}})$, etc. (Note we can fix one such $E$ here,
and use it to determine all such sets $A$ from some  proper segment of themselves.)

Part \ref{item:I_2_no_big_wellorder}:
Let $\vec{p}\in V_\lambda\cup\{V_\lambda\}\cup\OR$
and $\varphi$ be $\Sigma_1$,
and suppose that $\varphi(\vec{p},\cdot,\cdot)$ defines a wellorder $<^*$ of a set $D\sub\pow(\lambda)$ of cardinality $>\lambda$.
 Let $E$ be a rank-to-rank extender over $V_\lambda$
 with $M=\Ult(V,E)$ wellfounded.
 Let $n^*<\om$ be such that for all $x\in\vec{p}$,
 we have $i^{\Tt_{Ef}}_{0\om}(x)=x$
 for all appropriate $f$ with $f(k)=0$ for all $k<n^*$;
we get such an $n^*$ from Lemma \ref{lem:I_2_stable_xi_with_infinite_iters}. By replacing $E$ with its $n^*$th iterate $E_{n^*}$, we may assume $n^*=0$.

 Since $D$ has cardinality $>\lambda$,
 we can fix $X\in D$ such that $i^V_{E_n}(X)\neq X$ for all $n<\om$ (see the proof of part~\ref{item:I_2_Sigma_1_singletons}).
  Thus, for each $n<\om$ there is a unique $\ell\in(n,\om)$ such that
  \begin{equation*}\label{eqn:added_label_for_eqn_2} i^V_{E_n}(X\cap\kappa_{\ell-1})\neq X\cap\kappa_{\ell}\text{ and
   if }\ell-1>n\text{ then }i^V_{E_n}(X\cap\kappa_{\ell-2})=X\cap\kappa_{\ell-1}.\end{equation*}
   Let $\ell_n$ be this $\ell$. Thus (and equivalently), letting $\Tt_{E_n}$ be the linear iteration of $V$ using only $E_n$ and its images,
   \begin{equation*}\label{eqn:added_label_for_eqn_3}  i^{\Tt_{E_n}}_{0,((\ell_n-1)-n)}(X\cap\kappa_n)=X\cap\kappa_{\ell_n-1}, \end{equation*}
   \begin{equation*}\label{eqn:added_label_for_eqn_4} i^{\Tt_{E_n}}_{0,(\ell_n-n)}(X\cap\kappa_n)\neq X\cap\kappa_{\ell_n},\end{equation*}
   and note that $i^{\Tt_{E_n}}_{0,\ell_n-n}(\kappa_n)=\kappa_{\ell_n}$ and for all $i\in[n,\ell_n)$,
   we have $\ell_i=\ell_n$.

   Define the sequence $\left<m_n\right>_{n<\om}$
   recursively by $m_0=0$ and $m_{n+1}=\ell_{m_n}$.
 Now for $n<\om$ let $f_n:\om\to\om$
 be the function where $f_n(m_k)=m_{k+1}-m_k$
 for all $k\geq n$, and $f_n(i)=0$ for all other inputs $i<\om$. In particular, $f_n(m_\ell)=0$ for $\ell<n$.
 Note that $f_n$ is appropriate,
  $\crit(i^{\Tt_{Ef_n}}_{0\om})=\kappa_{m_n}$,
and for $\ell\in[n,\om)$ and $i<f_n(m_\ell)=m_{\ell+1}-m_\ell$,
\begin{equation*}\label{eqn:added_label_for_eqn_5} \crit(i^{\Tt_{Ef_n}}_{f_n(m_n)+\ldots+f_n(m_{\ell-1})+i,\om})=i^{\Tt_{Ef_n}}_{0,f_n(m_n)+\ldots+f_n(m_{\ell-1})+i}(\kappa_{m_\ell}).\end{equation*}

For $n<\om$, let $X_n=i^{\Tt_{Ef_n}}_{0\om}(X)$.
Then $X_n\in D$,
since  $i^{\Tt_{Ef_n}}_{0\om}(\vec{p})=\vec{p}$  (recalling that we could assume $n^*=0$), and
by elementarity and $\Sigma_1$-upward absoluteness.
Let $\Uu_n=\Tt_{Ef_n}\rest((m_{n+1}-m_n)+1)$
(this is just the portion of $\Tt_{Ef_n}$ using $E_{m_n}$
and its images), and $i^{\Uu_n}$ its final iteration map;
note that $\Uu_n$ is a tree on $V$. Then
note that
\begin{equation*}\label{eqn:added_label_for_eqn_6}  \Tt_{Ef_{n}}=\Uu_n\conc i^{\Uu_n}(\Tt_{E{f_{n+1}}})=
\Uu_n\conc i^{\Uu_n}\Tt_{Ef_{n+1}};
\end{equation*}
that is,
$\Tt_{Ef_n}$ consists of $\Uu_n$ followed by
$i^{\Uu_n}(\Tt_{Ef_{n+1}})$, which is just the same thing as
$\Uu_n$ followed by the
copy $i^{\Uu_n}\Tt_{Ef_{n+1}}$  of $\Tt_{Ef_{n+1}}$ under $i^{\Uu_n}$.
So
\begin{equation}\label{eqn:X_n=i^Uu_n(X_n+1)}X_n=i^{\Uu_n}(X_{n+1}).
\end{equation}
Moreover, note that $X_n\neq X_{n+1}$,
because
\begin{equation*}\label{eqn:added_label_for_eqn_7} X_{n+1}\cap\kappa_{m_{n+1}}=X\cap\kappa_{m_{n+1}}\neq X_n\cap\kappa_{m_{n+1}}.\end{equation*}
Also $i^{\Uu_n}(\vec{p})=\vec{p}$,
since $i^{\Tt_{Ef_n}}(\vec{p})=\vec{p}$.
So we are now in the position to reach a contradiction
just like at the end of the proof of Theorem~\ref{tm:above_inf_many_meas}
part~\ref{item:no_Sigma_1_long_wellorder}.
That is, since $X_n,X_{n+1}\in D$ but  $X_n\neq X_{n+1}$,
either $X_n<^*X_{n+1}$ or $X_{n+1}<^*X_n$.
Suppose $X_n<^*X_{n+1}$. Then $V\sats\varphi(\vec{p},X_n,X_{n+1})$.
But since
$i^{\Uu_n}(X_{n+1})=X_n$, $i^{\Uu_n}(\vec{p})=\vec{p}$ and by line (\ref{eqn:X_n=i^Uu_n(X_n+1)}),
therefore $M^{\Uu_n}_\infty\sats\varphi(\vec{p},i^{\Uu_n}(X_n),X_n)$, so (by $\Sigma_1$ upward absoluteness) $V\sats\varphi(\vec{p},i^{\Uu_n}(X_n),X_n)$,
so
\begin{equation*}\label{eqn:added_label_for_eqn_8}  i^{\Uu_n}(X_n)<^*X_n<^*X_{n+1},\end{equation*}
and then applying $i^{\Uu_n}$ again, it similarly follows that
\begin{equation*}\label{eqn:added_label_for_eqn_9}  i^{\Uu_n}(i^{\Uu_n}(X_n))<^*i^{\Uu_n}(X_n)<^*X_n<^*X_{n+1},\end{equation*}
etc, producing an infinite descending sequence,
a contradiction.
So we must have $X_{n+1}<^*X_n$ for all $n$,
again giving us an infinite descending sequence
\[ \cdots<^* X_2<^*X_1<^*X_0.\]

Part \ref{item:I_2_PSP}:
With the tools developed above and methods similar
to those used in the proof of Theorem~\ref{tm:above_inf_many_meas} part~\ref{item:every_Sigma_1_has_perfect_subset},
it is easy to
build a function $f:{^{<\om}}2\to{^{<\lambda}}2$ of the right form. Given $x:\om\to 2$,
we will get $X_x=\bigcup_{n<\om}f(x\rest n)=i^{\Tt_{Eg_x}}(X)$
for a certain appropriate function $g_x$,
where $X$ is a set as before.
The function $g_x$ is produced continuously
in $x$, and if $x,y:\om\to\om$ with $x\neq y$,
then the distinction between $X_x$ and $X_y$
is produced by a distinction between $g_x$ and $g_y$, much like how we produced the distinction between $X_n$ and $X_{n+1}$ in the previous proof. We leave the details to the reader.
\end{proof}

\section{Descriptive Properties of $L[U]$}\label{sec:gdst_in_L[U]}

Throughout this section, we assume ZFC, $\mu$ is a measurable cardinal and $V=L[U]$ where $U$ is a normal measure on $\mu$. We make some observations on  descriptive set theory at uncountable cardinals $\kappa$ in this context, using the basic theory of $L[U]$ (see \cite{kunen}, \cite{jech}), by which $\mu$ is the unique measurable and $U$ the unique normal measure on $\mu$.
Note that since GCH holds in $L[U]$, a cardinal $\kappa$ is $\mu$-steady
iff $\kappa>\mu^+$, $\cof(\kappa)\neq\mu$ and $\kappa$ is not of the form $\nu^+$ where $\nu$ is a cardinal with $\cof(\nu)=\mu$.
Say a filter $D$ is \emph{$\kappa$-good}
iff $D$ is a filter over some $\gamma<\kappa$ and
$D\in L[D]\sats$ ``$D$ is a normal measure on $\gamma$''. We officially work with $L[U]$ in the modern, fine structural hierarchy, using notation as described in \S\ref{subsec:notation}.
So the extender sequence $\es^{L[U]}$ of $L[U]$
contains not only a total extender $E_U$ with derived normal measure $U$, but also many partial extenders determining  partial measures, on many ordinals $\leq\mu$,
and further, letting $\gamma=\lh(E_U)$, then $\es^U$ also has (extenders determining) partial measures on cofinally many ordinals $<\gamma$ (for example, the extender corresponding to $(\es^U\rest\mu)^\#$, which has critical point $>\mu$).

Part~\ref{item:injective_func_exists} of the following theorem
should be contrasted with Theorem~\ref{tm:above_one_measurable} part~\ref{item:no_injection_kappa^+_to_p(kappa)_allowing_countable_cof},
and part~\ref{item:no_perf_subset}
 already follows from \cite[Theorem 1.2]{Sigma_1_def_at_higher_cardinals} in the case that $\kappa$ is singular (in $L[U]$). But we will give a self-contained proof.

\begin{tm}\label{tm:L[U]_gdst}
 Assume $V=L[U]$ where $U$ is a normal measure and let $\mu$ be measurable. Then
 for every uncountable cardinal $\kappa$, we have:
 \begin{enumerate}
 \item\label{item:long_wo} There is a set $X\sub\pow(\kappa)$
 of cardinality $\kappa^+$
 and a wellorder $<^*$ of $X$
 such that $X,{<^*}$ are both $\Sigma_1(\{\kappa\})$.
 \item \label{item:injective_function_in_non-steady_case}
 By \cite{meas_cards_good_Sigma_1_wo}, if either $\kappa\leq\mu^+$ or $\kappa$ is non-$\mu$-steady
 then $\es^{L[U]}\rest\kappa^{+}$ and  $<_{L[U]}\rest\her_{\kappa^+}$ are $\Sigma_1(\{\kappa\})$,
 so the set of master codes of levels of $L[U]$
 \tu{(}in the fine hierarchy given by $\es^{L[U]}$\tu{)} projecting to $\kappa$
 is $\Sigma_1(\{\kappa\})$.
\item\label{item:function_f_for_U} If $\kappa\leq\mu^+$ or $\kappa$ is non-$\mu$-steady then there is a $\Sigma_1(\{\kappa\})$ function $f:\kappa^+\to\pow(\kappa)$ such that
  for each $\xi<\kappa^+$,
  $f(\xi)$ is a pair $(T,A)$ such that
  for some $\eta\in(\xi,\kappa^{+})$,
we have $A\in (L[U]|\eta)\sats$ ``$\kappa$ is the largest cardinal
  and $A$ is the $<_{L[U]}$-least wellorder of $\kappa$ in ordertype $\xi$'',
  \begin{equation}\label{eqn:L_eta[U]_is_hull} L[U]|\eta=\Hull^{L[U]|\eta}(\kappa),\end{equation}
  $T$  is the theory
  \begin{equation}\label{eqn:theory_T} T=\Th^{L[U]|\eta}(\kappa),\end{equation}
  and if $\kappa$ is singular then $L[U]|\eta\sats$ ``$\kappa$ is singular''.\footnote{Here when defining the hull $\Hull^{L[U]|\eta}(\kappa)$
  and the theory $\Th^{L[U]|\eta}(\kappa)$,
  the language is that of premice; in particular, there are symbols for $\es^{L[U]|\eta}$ and the active extender $F^{L[U]|\eta}$.  We encode finite tuples of ordinals via some standard  G\"odel coding, so that we can simply talk about tuples $(\varphi,\vec{x})$ where $\varphi$ is a formula and $\vec{x}$ a finite tuple of parameters in $\kappa$. Likewise, $(A,T)$ is encoded simply as a single subset of $\kappa$.}
  \item\label{item:injective_func_exists} If $\kappa>\mu^+$ and $\kappa$ is $\mu$-steady then there is a  set $d\sub\kappa^+$ which is stationary-co-stationary in $\kappa^+$,
  and a function $f:d\to\pow(\kappa)$,
  such that $d,f$ are both $\Sigma_1(\{\kappa\})$,
  and for each $\xi\in d$,
  $f(\xi)$ is a pair $(T,A)$
  such that for some $\kappa$-good filter $D$ and some ordinal $\eta\in(\kappa,\kappa^+)$,
  we have $A\in (L[D]|\eta)\sats$ ``$\kappa$ is the largest cardinal
  and $A$ is the $<_{L[D]}$-least wellorder of $\kappa$ in ordertype $\xi$'',
  \begin{equation}\label{eqn:L_eta[D]_is_hull} L[D]|\eta=\Hull^{L[D]|\eta}(\kappa),\end{equation}
  $T$  is the theory
  \begin{equation}\label{eqn:theory_T_D} T=\Th^{L[D]|\eta}(\kappa),\end{equation}
  and if $\kappa$ is singular then $L[D]|\eta\sats$ ``$\kappa$ is singular''.
  \item\label{item:no_perf_subset} There is a $\Sigma_1(\{\kappa\})$
  set $X\sub\pow(\kappa)$
  of cardinality $\kappa^+$,
  such that $X$ has no simply perfect subset
  \tu{(}Definition \ref{dfn:simply_perfect}\tu{)}.
  \end{enumerate}
\end{tm}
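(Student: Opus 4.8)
The plan is to take $X=\rg(f)$, where $f$ is the $\Sigma_1(\{\kappa\})$ function furnished by part \ref{item:function_f_for_U} when $\kappa\leq\mu^+$ or $\kappa$ is non-$\mu$-steady, and by part \ref{item:injective_func_exists} when $\kappa>\mu^+$ is $\mu$-steady. In either case $X$ is $\Sigma_1(\{\kappa\})$ and $\card(X)=\kappa^+$, and since $f$ is injective there is a $\Sigma_1(\{\kappa\})$ ``decoding'' map $g\colon X\to\kappa^+$ with $g(f(\xi))=\xi$; writing $b=(T,A)$ we have $g(b)=\ot(A)$ and $T=\Th^{N_b|\eta_b}(\kappa)$ for a transitive model $N_b|\eta_b$ (with $N_b=L[U]$, resp.\ $N_b=L[D_b]$ for a $\kappa$-good $D_b$) satisfying $N_b|\eta_b=\Hull^{N_b|\eta_b}(\kappa)$ and $g(b)<\eta_b$. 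Two features of these codes will be used: first, since $N_b|\eta_b$ is pointwise definable over $\kappa$, the theory $T$ reconstructs $N_b|\eta_b$, so $b$ appears in the $L[U]$-hierarchy only a bounded stretch above $\eta_b$, and in particular $b\notin N_b|\eta_b$ (Tarski); second, $\eta_b$ is recoverable from $g(b)$, being essentially the least hull-level of the appropriate model above $g(b)$.

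Suppose for contradiction that $\iota\colon{}^{<\theta}2\to{}^{<\kappa}2$ is a simply perfect function (with $\theta=\cof(\kappa)$) all of whose branches $b_x=\bigcup_{\alpha<\theta}\iota(x\rest\alpha)$ (for $x\in{}^\theta 2$) lie in $X$. Since GCH holds in $L[U]$, $\iota$ is coded by a subset of $\kappa$, so $\iota\in\her_{\kappa^+}$; using that the hull-levels of $L[U]$ are cofinal in $\kappa^+$ (as established in the proof of part \ref{item:function_f_for_U}) together with a reflection argument exploiting the $\Sigma_1$-ness of $X$, $f$ and $g$, I would fix an ordinal $\bar\eta<\kappa^+$ with $\iota\in L[U]|\bar\eta$, $L[U]|\bar\eta=\Hull^{L[U]|\bar\eta}(\kappa)$, with $L[U]|\bar\eta$ computing $X$, $f$, $g$ correctly for parameters in $\kappa\cup\{\kappa\}$, and with $L[U]|\bar\eta\sats$``$\iota$ is simply perfect and all of its branches lie in $X$''. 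The branch map $x\mapsto b_x$ and the composite $h\colon x\mapsto g(b_x)$ are definable over $L[U]$ from $\iota$ and $\kappa$; $h$ is injective on ${}^\theta 2$ (distinct branches are distinct since $\iota$ splits, and $f$ is injective), and the auxiliary function $m(s)=\min\{h(x):s\sub x\in{}^\theta 2\}$ on ${}^{<\theta}2$ has range of size $\leq\card({}^{<\theta}2)=\theta\leq\kappa$, hence is bounded in $\kappa^+$.

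The contradiction is then extracted by distinguishing the cofinality of $\kappa$. If $\kappa$ is regular then $\card({}^\theta 2)=2^\kappa=\kappa^+$, so $h$ has cofinal range in $\kappa^+$ while $m$ is bounded; picking $\gamma\in(\bar\eta,\kappa^+)$ and letting $x$ be the $<_{L[U]}$-least member of ${}^\kappa 2$ with $h(x)>\gamma$, one checks that $x$ appears in the $L[U]$-hierarchy below $\eta_{b_x}$ (its constructibility rank is controlled by $\max(\bar\eta,\gamma)$, while $\eta_{b_x}>h(x)>\gamma$) and that $\iota\in L[U]|\bar\eta\sub L[U]|\eta_{b_x}=N_{b_x}|\eta_{b_x}$; if $\gamma$ is moreover chosen so that $L[U]|\eta_{b_x}$ is closed under the $\kappa$-recursion computing $b_x$ from $\iota$ and $x$ (using that cofinally many hull-levels have this closure), then $b_x\in N_{b_x}|\eta_{b_x}$, contradicting $b_x\notin N_{b_x}|\eta_{b_x}$. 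If $\kappa$ is singular then every $x\in{}^\theta 2$ is a subset of $\theta<\kappa$, hence lies in $L[U]|\bar\eta$, so $\{b_x:x\in{}^\theta 2\}\sub X\cap L[U]|\bar\eta$, and the contradiction is obtained inside $L[U]|\bar\eta$ by a diagonal construction of a branch $x$ along the splitting tree of $\iota$ which forces $b_x$ not to be a legitimate code (this is the case already covered by \cite[Theorem 1.2]{Sigma_1_def_at_higher_cardinals}). The main obstacle is this final step — arranging, via the reflection/minimality argument in the regular case and the diagonalization in the singular case, that the branch $b_x$ is manufactured inside, yet must also encode the theory of, the very hull-level it is supposed to code; carrying this out requires careful bookkeeping of constructibility ranks and of the uniqueness of the $<_{L[U]}$- (resp.\ $<_{L[D]}$-) least wellorder of each ordertype, and the regular case is the genuinely new content.
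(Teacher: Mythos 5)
Your treatment of part (\ref{item:long_wo}) (take $X=\rg(f)$ and pull back the ordinal order) is the paper's, but you prove neither part (\ref{item:function_f_for_U}) nor part (\ref{item:injective_func_exists}); the latter carries real content --- the stationarity and co-stationarity of $d$, and, most subtly, that the clause ``$\xi$ is $D$-$1$-stable'' admits a $\Sigma_1(\{\kappa\})$ formulation, which the paper handles by showing that every $g:\crit(D)\to\eta$ in $L[D]$ is captured on a $D$-large set inside $L[D]|\eta$, using $\cof^{L[D]}(\kappa)\neq\crit(D)$ (this is where $\mu$-steadiness enters).

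The serious gap is in your argument for part (\ref{item:no_perf_subset}) in the regular case. Its pivot is the claim that the $<_{L[U]}$-least $x\in{}^{\kappa}2$ with $h(x)>\gamma$ has constructibility rank controlled by $\max(\bar\eta,\gamma)$ and hence lands inside $N_{b_x}|\eta_{b_x}$. That is unjustified and in general false: $x$ and $b_x$ are mutually computable modulo $\iota$, and (for codes witnessed by $D=U$) Tarski gives $b_x\notin L[U]|\eta_{b_x}$ while $b_x$ is definable over $L[U]|\eta_{b_x}$, so \emph{every} branch with $h(x)>\gamma$ is itself first constructed at essentially the level $\eta_{b_x}$ --- the two ranks agree to within the finitely many $\J$-steps your contradiction would need to control, so minimizing over $\{x:h(x)>\gamma\}$ produces nothing constructed ``early''. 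Worse, when the witnessing filter $D$ is a proper iterate of $U$, the Tarski obstruction lives in $L[D]$, and $\iota$ (an essentially arbitrary subset of $\kappa$ in $L[U]$) need not belong to $L[D]$ at all, so one cannot place $b_x$ in $N_{b_x}|\eta_{b_x}$ even granting the rank estimates. The singular-case ``diagonalization'' is also not an argument: there are up to $\kappa$ many codes in $L[U]|\bar\eta'$ and only $\cof(\kappa)$ steps available, and under the contradiction hypothesis every branch you build \emph{is} one of those codes; moreover \cite[Theorem 1.2]{Sigma_1_def_at_higher_cardinals} only yields a $\Sigma_1(\her_\kappa\cup\{\kappa\})$ set, not a $\Sigma_1(\{\kappa\})$ one. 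The missing idea is to leave $V$: force with $\mathrm{Add}(\theta,1)$ for $\theta=\cof(\kappa)$ (a $\theta$-closed forcing) to add a branch $x\notin V$; verify in $V[g]$ that the associated $(A,T)$ still meets the defining conditions of $X$ (genericity gives that $T$ is a complete, Skolemized theory faithfully representing the ordinals $\leq\kappa$; wellfoundedness of the term model follows from $\theta$-closure when $\theta>\om$, and from a ranking-of-trees argument carried out in $V$ when $\theta=\om$; iterability follows from $\om_1^{V[g]}=\om_1^{V}<\kappa$); then the uniqueness of $L[U]$ and its iterates forces $(A,T)\in V$, hence $x\in V$, a contradiction.
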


\begin{proof}
Part~\ref{item:long_wo} will be an immediate corollary of parts\nolinebreak[3] \ref{item:injective_function_in_non-steady_case}~and~\ref{item:injective_func_exists}, where in the case of part~\ref{item:injective_func_exists}, we
take $X=\range(f)$ and for $A,B\in X$, set $A<^*B$ iff $f^{-1}(A)<f^{-1}(B)$.

Part~\ref{item:injective_function_in_non-steady_case}: See  \cite{meas_cards_good_Sigma_1_wo}.

Part~\ref{item:function_f_for_U}: This is a routine consequence of part~\ref{item:injective_function_in_non-steady_case}.

 Part~\ref{item:injective_func_exists}:
 Define $f$ as the set of pairs $(\xi,(T,A))$
 such that
 $\kappa<\xi<\kappa^+$
 and there is a $\kappa$-good filter $D$ and an ordinal $\eta\in(\xi,\kappa^+)$ such that $A\in (L[D]|\eta)\sats$ ``$\card(\xi)=\kappa$'', $A$
  is the $<_{L[D]}$-least
  wellorder of $\kappa$ in ordertype $\xi$,
  $\xi$ is $D$-$1$-stable
  but non-$D$-$0$-stable,  if
 $\kappa$ singular
  then $L_\eta[D]\sats$ ``$\kappa$ is singular'',
  there is no $\eta'<\eta$
  with these properties with respect to $D,\xi$,
  and $T$ is the theory as in line~(\ref{eqn:theory_T_D}). Because of the minimality of $\eta$,
  line~(\ref{eqn:L_eta[D]_is_hull}) then also holds.

  \begin{clm}\label{clm:stat_domain}
  There are stationarily many $\xi<\kappa^+$ such that there are $T,A$
  with $(\xi,(T,A))\in f$,
  as witnessed by $D=U$ and some $\eta$.\end{clm}

  \begin{proof}

  By Lemma~\ref{fact:every_ord_ev_stable}, for each $\xi\leq\kappa^+$,
  there is $\alpha<\kappa$ such that $\xi$ is $\alpha$-stable.
 So for some $\alpha<\kappa$,
  there are cofinally many $\xi<\kappa^+$ which are $\alpha$-stable. Then since $\kappa,\kappa^+$ are $0$-stable, it follows
  that in fact there are cofinally many $\xi<\kappa^+$ which are $0$-stable.

  Let $\eta_0$ be the least $\eta>\kappa$
  such that if $\kappa$ is singular then $L_\eta[U]\sats$ ``$\kappa$ is singular''.
  Now it suffices to see that there are stationarily many $\xi\in[\eta_0,\kappa^+)$ which are $1$-stable but non-$0$-stable, since for all such $\xi$,
  there is $(T,A)$ as claimed.
  So let $C\sub\kappa^+$ be club in $\kappa^+$.
  Let  $\xi$ be the least ordinal $\geq\eta_0$ of cofinality $\mu$ which is a limit point of $C$
  (so $\xi\in C$ also)
  and is a limit of $0$-stable ordinals. Because $\cof(\xi)=\mu$,
  $\xi$ is non-$0$-stable
  and $i^{\Tt_U}_{1\beta}$ is continuous at $\xi$ for each $\beta\in[1,\kappa)$ (we have $\cof^{M^{\Tt_U}_1}(\xi)=\cof(\xi)=\mu$,
  so $\cof^{M^{\Tt_U}_\beta}(i^{\Tt_U}_{1\beta}(\xi))=\mu$ for all $\beta\in[1,\kappa)$).
    But also because $\xi$ is a limit of $0$-stable ordinals
    (hence also $1$-stable),
  it follows that $\xi$ is $1$-stable, which suffices.
\end{proof}

  \begin{clm}
   $f$ is a function.\end{clm}
   \begin{proof}Let $\xi<\kappa^+$  and $A,T,A',T'$ be such that $(\xi,(A,T))\in f$, as witnessed by $D,\eta$, and $(\xi,(A',T'))\in f$, as witnessed by $D',\eta'$.
  Then observe that $D=D'$, $A=A'$,
  $\eta=\eta'$, and $T=T'$,
  as desired.\end{proof}

  Let $d=\dom(f)$.
 \begin{clm} $d$ is co-stationary in $\kappa^+$.\end{clm}
 \begin{proof}By Theorem~\ref{tm:above_one_measurable} part~\ref{item:no_injection_kappa^+_to_p(kappa)_allowing_countable_cof},
  or more directly, just observe (by a simplification of the proof of Claim~\ref{clm:stat_domain}) that there are stationarily many $\xi<\kappa^+$ which are $0$-stable.\end{proof}

  It just remains to verify:
  \begin{clm}
   $d,f$ are $\Sigma_1(\{\kappa\})$.
  \end{clm}
\begin{proof}
It is straightforward to see that most of the defining clauses are $\Sigma_1(\{\kappa\})$, but there is a subtlety regarding the clause ``$\xi$ is $D$-$1$-stable'', and also a minor subtlety regarding the singularity of $\kappa$.
 First, regarding singularity, note that we do not include  a clause expressing ``if $\kappa$ is singular''; instead, just in the case that $\kappa$ \emph{is} singular,
   we demand that $L_\eta[D]\sats$ ``$\kappa$ is singular''.
  Second, regarding $D$-$1$-stability, let $\theta=\cof^{L[D]}(\kappa)$. Note that by the requirements on $(\xi,T,A,\eta,D)$,
$\eta$ is the least $\eta'>\xi$
such that $L[D]|\eta'\sats$ ``$\card(\xi)=\kappa$ and $\cof(\kappa)=\theta$''; therefore, $\eta=\eta''+\om$ for some $\eta''$ such that $L[D]|\eta''$ projects to $\kappa$, and there is a bijection $\pi:\kappa\to\xi$ and a cofinal function $h:\theta\to\kappa$ which are
both definable from parameters over $L[D]|\eta''$.
Let $\mu_D=\crit(D)$. We have $\mu_D<\kappa$,
so $\pow(\mu_D)\cap L[D]\sub L[D]|\kappa\sub L[D]|\eta$.

It suffices to see that
for all $g\in L[D]$ with $g:\mu_D\to\eta$,
there is $X\in D$ such that $g\rest X\in L[D]|\eta$, since then $i^{\Tt_D^{L[D]}}_{0\alpha}(\xi)=i^{\Tt^{L[D]|\eta}_D}_{0\alpha}(\xi)$ for all $\alpha<\kappa$,
where $\Tt^{L[D]|\eta}_D$ is the linear iteration of $L[D]|\eta$
(whereas $\Tt_D^{L[D]}$ is the linear iteration of $L[D]$).

Now because $\kappa$ is $\mu$-steady,
$\cof(\kappa)\neq\mu$,
and it follows that $\theta=\cof^{L[D]}(\kappa)\neq\mu_D$. (Indeed, if $D\neq U$
then we can fix $\alpha<\kappa$ such that
 $(D,\mu_D)=i^{\Tt_U}_{0\alpha}(U,\mu)$.
 But $i^{\Tt_U}_{0\alpha}(\kappa)=\kappa$
 since $\kappa$ is $\mu$-steady,
 so letting $\theta'=\cof(\kappa)\neq\mu$,
 we have $i^{\Tt_U}_{0\alpha}(\theta')=\theta$,
 so $\theta\neq\mu_D$.)
 So fix $g:\mu_D\to\eta$ with $g\in L[D]$.
 Then there is $X_0\in D$ such that $g\rest X_0$ is bounded in $\eta$, since $\eta=\eta'+\om$. Thus,
 we may assume $g:\mu_D\to\eta'$.
Let $\pi:\kappa\to\eta'$ be a bijection in $L[D]|\eta$.
Let $g'=\pi^{-1}\com g:\mu_D\to\kappa$.
 Because $\cof^{L[D]}(\kappa)\neq\mu_D$,
 there is $X\in L[D]$ such that $g'\rest X$ is bounded in $\kappa$, and therefore $g'\rest X\in L[D]|\kappa$. But then $g\rest X=\pi\com g'\rest X\in L[D]|\eta$,
 as desired.
\end{proof}
  Part~\ref{item:no_perf_subset}:
 We have two cases:

  \begin{case}\label{case:mu^+<kappa_and_kappa_mu-steady} $\mu^+<\kappa$
  and $\kappa$ is $\mu$-steady.

  Let $f:d\to\pow(\kappa)$ be as in part~\ref{item:injective_func_exists}
  and $X=\rg(f)$. We claim this set works.
  For clearly $X$ is $\Sigma_1(\{\kappa\})$ and $\card(X)=\kappa^+$, so we just need to see that $X$ has no simply perfect subset.

  Suppose otherwise. Let
    $\theta=\cof(\kappa)$ and
  $\sigma:{^{<\theta}}2\to{^{<\kappa}}2$ be a simply perfect function with $(\bigcup_{\alpha<\theta} \sigma(x\rest\alpha))\in X$ for all $x\in{^\theta}2$.
  Let $\PP=\mathrm{Add}(\theta,1)$.
  Note that if $\theta=\om$, this is just Cohen forcing. Then $\PP$ is $\theta$-closed.
  Let $g$ be $(V,\PP)$-generic.
  Then $({^{<\theta}}V)\cap V[g]\sub V$.

  Work in $V[g]$.
 Let $x=(\bigcup g)\in{^\theta}2$.
 Let $(A,T)=\bigcup_{\alpha<\theta} \sigma(x\rest\alpha)$.
 We have $(A,T)\notin V$,
 since $x\notin V$, and $(A,T)$ determines $x$ via $\sigma$.
 But we will observe that $(A,T)$
satisfies the conditions required for elements of $X$ (except that it is in $V[g]\cut V$); that is, there are some (uniquely determined) corresponding $D,\eta,\xi$
with those properties holding in $V[g]$. But then by the uniqueness of $L[U]$ and its iterates, we get $D,T,A\in V$, and therefore $x\in V$, a contradiction.

So, we have $A,T\in V[g]$.
We want to see there are $D,\eta,\xi$
with the right properties. First note that $A,T$ are unbounded in $\kappa$,
by genericity and since for every $y\in({^\theta}2)^V$, the sets $A',T'$ coded by $\sigma(y)$ (which is in $X$) are unbounded in $\kappa$.
Let us verify that $A,T$ satisfy the right model-theoretic properties (ignoring wellfoundedness and iterability for the moment). Clearly $T$  contains the basic first order theory needed (like ``$V=L[\dot{D}]$'', etc.)
and is complete and consistent,
and $T$ is  a Skolemized theory, since levels of $L[D]$ have a definable wellorder (that is, for each formula of form $\text{``}\exists x\ \psi(x,\vec{\alpha})\text{''}$ in $T$,
where $\vec{\alpha}\in\kappa^{<\om}$,
 there is a term $t$ such that ``$\psi(t(\vec{\alpha}),\vec{\alpha})\text{''}$ is in $T$).
 We also need to know that we faithfully represent the ordinals $\leq\kappa$. But clearly for all ordinals $\alpha<\beta<\kappa$,
 the formula ``$\alpha<\beta<\dot{\kappa}$, where $\dot{\kappa}$ is the largest cardinal'' is in $T$.
 And for each term $t$
 and all $\vec{\alpha}\in\kappa^{<\om}$, if ``$t(\vec{\alpha})<\dot{\kappa}$ where $\dot{\kappa}$ is the largest cardinal'' is in $T$,
 then there is $\beta<\kappa$
 such that ``$t(\vec{\alpha})=\beta$'' is in $T$: the latter follows
  again
from genericity, and since in $V$, for each $s\in{^{<\theta}2}$,
taking any $y\in{^{\theta}2}$ with $s\sub y$, we have $(\bigcup_{\alpha<\theta} \sigma(y\rest\alpha))\in X$, and so we can extend $s$ to some $s'\sub y$
to ensure an equation of the desired form gets into (the generic) $T$.
This gives the desired
model-theoretic properties.

Let $M_T$ be the model determined by $T$.
Suppose first that $\theta>\om$. Then $M_T$ is wellfounded, since then every countable substructure of $M_T$ can be realized in the model $L[D']|\eta'$ corresponding to some branch in $V$. So $M_T=L[D]|\eta$ where $L[D]|\eta\sats$ ``$D$ is a $\kappa$-good filter''.
Since $\om_1^{V[g]}=\om_1^V<\kappa\sub M_T$, it also follows that $D$ is truly iterable, and so $(A,T)$ does indeed satisfy the requirements for elements of $X$,
yielding the desired contradiction.

Now suppose that $\theta=\om$.
We use a different argument to see that $M_T$ is wellfounded.
Working in $V$, consider  the tree $S$ of attempts to build a real $y\in{^\om}2$ and a descending sequence of ordinals through the model $M_y$ determined by $\bigcup_{n<\om}(\sigma(y\rest n))$.
(Here the $n$th level of $S$
specifies some finite segment of $y$ of length $\geq n$, and terms defining the first $n$ elements of the desired descending sequence through (the to-be-completed) $\OR^{M_y}$.)
Then since (in $V$) every branch through $\sigma$ gives a wellfounded model, $S$ is wellfounded.
It follows that $M_T$ is wellfounded.
The rest of the argument is as in the case that $\theta>\om$ (the value of $\theta$ was not relevant to the iterability of $D$; we still have $\om_1^{V[G]}=\om_1^V<\kappa\sub M_T$).
\end{case}

  \begin{case} $\om<\kappa\leq\mu^+$ or [$\mu^+<\kappa$ and $\kappa$ is non-$\mu$-steady].

  This is a slight variant of the previous case, substituting
  the calculation of \cite{meas_cards_good_Sigma_1_wo}
  to identify $\es^{L[U]}$ appropriately, instead of the methods we used above. We leave the details to the reader.
  \qedhere
  \end{case}
\end{proof}

\begin{dfn}\label{dfn:4.2}
 Fix an uncountable cardinal $\kappa$.
 Relative to $\kappa$,
 and some definability class $\Gamma$, we say that $A\sub\kappa$
 is a \emph{$\Gamma$-singleton}
 iff $\{A\}$ is $\Gamma$-definable. As usual, we say that some class $C$ of singletons forms a \emph{basis} for some definability class $\Gamma$
 iff every non-empty $X\sub\pow(\kappa)$
 which is $\Gamma$-definable
 has an element $x\in C$.
\end{dfn}

\begin{tm}\label{tm:L[U]_non-basis}
 Assume $V=L[U]$ where $U$ is a normal measure on $\mu$. Let $\kappa>\mu^+$ be a $\mu$-steady cardinal.
 Then:
 \begin{enumerate}
 \item\label{item:every_Sigma_1_singleton_in_M_kappa} For every $0$-stable ordinal $\alpha$,
 every  $\Sigma_1(\{\alpha\})$-singleton is in $M^{\Tt_U}_\kappa$. In particular,
 every $\Sigma_1(\{\kappa\})$-singleton is in $M^{\Tt_U}_{\kappa}$.
 \item\label{item:Pi_1_singletons_not_all_in_M^T_kappa} Not every
 $\Pi_1(\{\kappa\})$ singleton
 is in $M^{\Tt_U}_\kappa$.
  \item\label{item:Sigma_1_singletons_non-basis} The  $\Sigma_1(\{\kappa\})$ singletons do not form a basis for $\Delta_1(\{\kappa\})$
  \tu{(}thus, neither $\Sigma_1(\{\kappa\})$ nor $\Delta_1(\{\kappa\})$ have the uniformization property\tu{)}.
  \item\label{item:Pi_1_singletons_non-basis} In fact, the $\Pi_1(\{\kappa\})$
  singletons do not form a basis for  $\Sigma_1(\{\kappa\})$.
 \end{enumerate}
\end{tm}
\begin{proof}
 Part~\ref{item:every_Sigma_1_singleton_in_M_kappa}: This is an immediate corollary of Theorem~\ref{tm:above_one_measurable} part~\ref{item:wellorders_of_kappa},
 since for  $A$ a set of ordinals,
 $A$ is a $\Sigma_1(S)$-singleton
 iff $A$ is $\Delta_1(S)$.

 Part~\ref{item:Pi_1_singletons_not_all_in_M^T_kappa}:
 $U$, coded naturally as a subset of $\kappa$ (using the $L[U]$-order of constructibility) is a such a $\Pi_1(\{\kappa\})$ singleton, since it is the unique
 $\kappa$-good filter $D$ such that every bounded subset of $\kappa$ is in $L_\kappa[D]$.

 Part~\ref{item:Sigma_1_singletons_non-basis}: Let $X=\pow(\kappa)\cut M^{\Tt_U}_\kappa$. Then $X$ is $\Delta_1(\{\kappa\})$, since the following three conditions are equivalent: (i)
 $A\in X$, (ii)
  $A\sub\kappa$ and there is a $\kappa$-good filter $D$
  such that  $A\notin M^{\Tt_D^{L[D]|\kappa}}_\kappa$,
  and (iii)
  $A\sub\kappa$
 and for every $\kappa$-good filter $D$,
 we have $A\notin M^{\Tt_D^{L[D]|\kappa}}_\kappa$.
 (Here $\Tt^{L[D]|\kappa}_D$ is the iteration on $L[D]|\kappa$ given by iterating $D$.)
 But by part~\ref{item:every_Sigma_1_singleton_in_M_kappa},
 there is no $\Sigma_1(\{\kappa\})$
 singleton $A\in X$.

 Part~\ref{item:Pi_1_singletons_non-basis}: Let $X$ be the set of all $A\sub\kappa$
 such that for some limit ordinal $\eta<\kappa$,
 we have $A\in\bigcap_{\alpha<\eta}M^{\Tt_U}_\alpha$
 but $A\notin M^{\Tt_U}_\eta$.
 Note that by Lemma~\ref{lem:charac_sets_in_M_eta_limit_eta}, $X$ is $\Sigma_1(\{\kappa\})$.
 Suppose there is a $\Pi_1(\{\kappa\})$
 singleton $A\in X$, and let $\psi$ be a $\Pi_1$ formula such that $A$ is the unique $B\sub\kappa$ such that $\psi(\kappa,B)$. Let $\eta$ be least such that $A\notin M^{\Tt_U}_\kappa$;
 so $\eta$ is a limit, as $A\in X$.
 By $\Pi_1$ downward absoluteness, for all $\alpha<\eta$,
 $M^{\Tt_U}_\alpha\sats\psi(\kappa,A)$. But by elementarity of $i^{\Tt_U}_{0\alpha}$, therefore $M^{\Tt_U}_\alpha\sats$ ``$A$ is the unique $B\sub\kappa$ such that $\psi(\kappa,B)$''. It follows that $i^{\Tt_U}_{0\alpha}(A)=A$.
 So $A$ is $[0,\eta)$-stable,
 so by Lemma~\ref{lem:charac_sets_in_M_eta_limit_eta},
 $A\in M^{\Tt_U}_\eta$, a contradiction.
\end{proof}

\section{$\Gamma$-Good Wellorders in 1-Small Mice}\label{sec:simply_def_good_WOs}

As mentioned at the start of \S\ref{sec:wo_above_one_meas},
L\"ucke and Schlicht \cite{meas_cards_good_Sigma_1_wo} characterized the uncountable cardinals $\kappa$ of $L[U]$ for which there is a $\Sigma_1(\{\kappa\})$-good wellorder of $\her^{L[U]}_{\kappa^{+L[U]}}$. In the cases that these exist, the restriction of $<^{L[U]}$ is itself $\Sigma_1(\{\kappa\})$-good.
Since under ZFC, if $\kappa$ is a limit of measurable cardinals then there is no $\Sigma_1(V_\kappa\cup\OR)$ wellorder of $\pow(\kappa)$,
the following result is close to optimal:
\begin{tm}\label{tm:good_wo_1-small}
 Let $M$ be a 1-small mouse. Let $\kappa$ be an uncountable cardinal of $M$ which  is not a limit of measurable cardinals of $M$ and such that $M\sats$ ``$\kappa^+$ exists'' \tu{(}that is, $\kappa^{+M}<\OR^M$\tu{)}.
 Suppose that for every $N\pins M|\kappa^{+M}$ and every $\Tt\in M$ which is a limit length  $\om$-maximal tree on $N$ \tu{(}so $\delta(\Tt)<\OR^M$\tu{)}, letting $\eta$ be such that $\delta(\Tt)+\eta=\OR^M$, it is not the case that $\J_\eta[M(\Tt)]\sats$ ``$\delta(\Tt)$ is Woodin''.
 Then there is a wellorder of $\her_{\kappa^{+M}}^M$ which is $\Sigma_1^{\univ{M}}(\her_\kappa^M\cup\{\kappa\})$-good.
\end{tm}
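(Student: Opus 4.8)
The plan is to build $<^*$ from \emph{certificates}. For $x\in\her_{\kappa^{+M}}^M$ (coded as a subset of $\kappa$ via the $<^M$-least code of its transitive closure, so that I may assume $x\subseteq\kappa$, or $x\subseteq\nu$ for some $\nu<\kappa$ when $x\in\her_\kappa^M$), let $\mathcal{M}_x$ be the $\triangleleft$-least $\mathcal{N}\triangleleft M|\kappa^{+M}$ such that $\mathcal{N}$ is sound, $\rho_\omega(\mathcal{N})\le\kappa$, $\kappa$ is the largest cardinal of $\mathcal{N}$, and $x\in\univ{\mathcal{N}}$. Such $\mathcal{N}$ exist, and $\her_{\kappa^{+M}}^M=\bigcup_x\univ{\mathcal{M}_x}$, because the levels of $M|\kappa^{+M}$ projecting to $\kappa$ are cofinal in $\kappa^{+M}$. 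Put $x<^*y$ iff $\OR^{\mathcal{M}_x}<\OR^{\mathcal{M}_y}$, or $\mathcal{M}_x=\mathcal{M}_y$ and $x<^{\mathcal{M}_x}y$. Goodness of $<^*$ is then routine: $\{y:y<^*x\}\subseteq\univ{M|(\OR^{\mathcal{M}_x}+\om)}$ has $M$-cardinality $\le\kappa$, so lies in $\her_{\kappa^{+M}}^M$, and the $<^*$-rank of $x$ and the set of $<^*$-predecessors of $x$ can be read off uniformly from $\mathcal{M}_x$ and the certificates $\triangleleft\mathcal{M}_x$. Since $\kappa$ is not a limit of measurables of $M$, I also fix a parameter $\bar\mu\in\her_\kappa^M$ bounding the measurable cardinals of $M$ below $\kappa$; this plays the role that $\mu$-steadiness plays in \cite{meas_cards_good_Sigma_1_wo}, and by Theorem~\ref{tm:above_inf_many_meas} some such hypothesis on $\kappa$ is necessary.

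The crux is that ``$\mathcal{N}=\mathcal{M}_x$'' is $\Sigma_1(\{\bar\mu,\kappa\})$. I would characterise it by: $\mathcal{N}$ is a $1$-small premouse; $\mathcal{N}$ is sound, $\rho_\omega(\mathcal{N})\le\kappa$, $\kappa$ is the largest cardinal of $\mathcal{N}$, and no extender occurring in $\mathcal{N}$ has critical point in $(\bar\mu,\kappa)$; $x\in\univ{\mathcal{N}}$ and is correctly coded; no proper initial segment of $\mathcal{N}$ has all these properties relative to $x$; and there is an iteration strategy $\Lambda$ for $\mathcal{N}$, acting on $\om$-maximal trees of length $\le\kappa^{+M}$, which is guided by $\mathbb{Q}$-structures (at each limit stage $\Lambda$ picks the branch $b$ whose $\mathbb{Q}$-structure equals, or is an initial segment of, the stack built over the common part model at that stage). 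The decisive point is that $\Lambda$, together with all $\mathbb{Q}$-structures it invokes, is a single set of $M$-cardinality $\le\kappa^{+M}$, so ``$\exists\Lambda$ which is such a strategy'' is a genuine $\Sigma_1$ assertion over $\univ{M}$ (once $\Lambda$ is exhibited, the clause ``$\Lambda$ is such a strategy'' is bounded, hence $\Delta_0$). That $\mathcal{M}_x$ itself satisfies this last clause is precisely where the theorem's hypothesis enters: for $\mathcal{N}\triangleleft M|\kappa^{+M}$ and a limit-length $\om$-maximal tree $\Tt\in M$ on $\mathcal{N}$ with $\delta(\Tt)<\OR^M$, the hypothesis that $\J_\eta[M(\Tt)]\not\sats$``$\delta(\Tt)$ is Woodin'' (for $\eta$ with $\delta(\Tt)+\eta=\OR^M$) says that the true $\mathbb{Q}$-structure $\mathbb{Q}(\Tt)$ is a set-sized initial segment of the stack over $M(\Tt)$ computed in $M$, hence is available in $M$; together with $1$-smallness (so that $\mathbb{Q}$-structures are Woodin-free below their top extender and are uniquely determined by $M(\Tt)$) the standard branch-uniqueness argument then gives that $M$'s own iteration strategy, restricted to trees on $\mathcal{N}$ of length $\le\kappa^{+M}$, is $\mathbb{Q}$-guided and is an element of $M$.

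For the converse, suppose $\mathcal{N}$ satisfies the displayed conditions, with witnessing strategy $\Lambda$. Coiterate $\mathcal{N}$ with $\mathcal{M}_x$: both are $\kappa$-sound, $1$-small, and iterable (via $\Lambda$, resp.\ via the strategy of $M$), so by the standard comparison fact for $\kappa$-sound iterable premice, neither side moves and one is an initial segment of the other. If $\mathcal{N}\triangleleft\mathcal{M}_x$ this contradicts the $\triangleleft$-minimality of $\mathcal{M}_x$; if $\mathcal{M}_x\triangleleft\mathcal{N}$ it contradicts the minimality clause in $\mathcal{N}$'s characterisation; hence $\mathcal{N}=\mathcal{M}_x$. (The clause ``no critical point in $(\bar\mu,\kappa)$'' rules out $\mathcal{N}$ being an initial segment of an iterate of $M$ in which a measurable has been moved up close to $\kappa$ --- exactly the configuration one cannot exclude when $\kappa$ is a limit of measurables.) Assembling: ``$x<^*y$'' reads ``$\exists\mathcal{N}_1\exists\mathcal{N}_2$ with $[\mathcal{N}_1=\mathcal{M}_x]$ and $[\mathcal{N}_2=\mathcal{M}_y]$ and ($\OR^{\mathcal{N}_1}<\OR^{\mathcal{N}_2}$, or $\mathcal{N}_1=\mathcal{N}_2$ and $x<^{\mathcal{N}_1}y$)'', which is $\Sigma_1(\{\bar\mu,\kappa\})$; the restriction of $<^*$ to $\her_\kappa^M$ is handled by the same certificates (now of $M$-cardinality $<\kappa$), and it all packages into one $\Sigma_1(\her_\kappa^M\cup\{\kappa\})$ definition, with goodness as above.

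I expect the main obstacle to be the $\mathbb{Q}$-structure step of the second paragraph: checking, with the hypothesis in the exact form given (about $\J_\eta[M(\Tt)]$ where $\delta(\Tt)+\eta=\OR^M$, for $\om$-maximal $\Tt$ on $N\triangleleft M|\kappa^{+M}$), that every relevant tree on a certificate admits an $M$-internal $\mathbb{Q}$-structure --- hence that a $\mathbb{Q}$-guided strategy for the certificate is an element of $M$ --- uniformly, including for trees that exceed the cardinal $\kappa^{+M}$ of $M$ but still have $\delta(\Tt)<\OR^M$. The supporting ingredients (the fine-structural comparison lemma for $\kappa$-sound iterable $1$-small premice, the coding of $\her_{\kappa^{+M}}^M$ by subsets of $\kappa$, and the bookkeeping that makes ``$\exists\Lambda$'' literally $\Sigma_1$) are standard; their interaction with the hypothesis near the Woodin cardinal of $M$, when $M$ has one (as in $M_1$), is the delicate part.
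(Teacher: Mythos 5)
There is a genuine gap, and it sits exactly where you predicted the difficulty would \emph{not} be: in the ``converse'' direction, at the claim that coiterating a certified $\mathcal{N}$ with $\mathcal{M}_x$ ``neither side moves and one is an initial segment of the other''. That comparison fact requires the two sound premice projecting to $\kappa$ to agree below $\kappa$, and your certificates need not satisfy this. Concretely, when $\kappa$ is a limit cardinal of $M$, there are sound, genuinely iterable, $1$-small premice $N$ with $\rho_\om^N=\kappa$, containing $x$, with no \emph{total} measures above $\bar\mu$, which are \emph{not} initial segments of $M|\kappa^{+M}$: they arise as proper initial segments of dropping linear iterates $M^\Tt_\kappa$ of $M$, where $\Tt$ uses partial extenders from $\es^M$ with critical points in $(\bar\mu,\kappa)$ and iterates them out to $\kappa$ (this is the content of Claim \ref{clm:N_good_analysis} in the paper's proof). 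In the comparison of such an $N$ with $M|\kappa^{+M}$ the $M$-side really does move (it drops and iterates to length $\kappa+1$), so your minimality clauses do not force $\mathcal{N}=\mathcal{M}_x$. Your attempted patch --- requiring that no extender of $\mathcal{N}$ have critical point in $(\bar\mu,\kappa)$ --- either reads ``no total extender'', in which case the dropping iterates above still qualify, or reads ``no extender at all'', in which case $\mathcal{M}_x$ itself typically fails it, since $M|\kappa$ carries partial extenders with critical points in $(\bar\mu,\kappa)$ even when $\kappa$ is not a limit of $M$-measurables. No first-order, $\Sigma_1$-verifiable property is known to separate the true initial segments from these iterates; indeed the paper explicitly states that it does not know whether the restriction of $<^M$ to $\her^M_{\kappa^{+M}}$ is $\Sigma_1(\her^M_\kappa\cup\{\kappa\})$ when $\kappa$ is a limit cardinal, which is precisely what your definition would yield.

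The paper's way around this is not to certify ``$\mathcal{N}\pins M|\kappa^{+M}$'' at all, but to admit all the ``good'' premice (true segments and dropping iterates alike), prove via a comparison analysis that the resulting collection is wellordered by $\in$ (any two good premice with different $\kappa$-th levels are related by one lying in the other's admissible closure), and then define $X<^*Y$ iff $N_X\in N_Y$, or $N_X=N_Y$ and $X<^{N_X}Y$, where $N_X$ is the $\in$-least good premouse containing $X$. The resulting wellorder genuinely differs from $<^M\rest\her^M_{\kappa^{+M}}$. Your architecture is fine for the case $\kappa=\gamma^{+M}$ (there one can pin down $M|\kappa$ exactly, using a cutpoint or the fact that every level between $\gamma$ and $\kappa$ is swallowed by an ultrapower by a total extender with critical point $<\gamma$), and your $\Sigma_1$ bookkeeping for iterability is workable (the paper uses a ranking of a tree of attempted counterexamples, ``strong iterability in $L_\xi[N]$'', to the same end); but for limit $\kappa$ the wellorder itself has to change, not just its definition.
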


It is of note that in case $\kappa$ is a limit cardinal of $M$, the  wellorder we use to prove the theorem is not just the restriction $<^*$ of the standard order of constructibility $<^M$ of $M$ to $\her^M_{\kappa^{+M}}$; there are in fact cases in which this restriction
is \emph{not} $\Sigma_1^{\univ{M}}(\her_\kappa^M\cup\{\kappa\})$-good; see Theorem~\ref{tm:om_1-it} for an example. (But this seems to leave open the possibility
that this ordering is, however, $\Sigma_1^{\univ{M}}(\her_\kappa^M\cup\{\kappa\})$ (\emph{-definable}, but not -good).) In case $\kappa$ is a successor cardinal of $M$, we will be able to simply use the order of constructibility.
But if $\kappa$ is a limit cardinal,
we will combine the mouse order with various orders of constructibility.
Here is the basic idea of the order in the latter case, if it is not quite precise: For $X\in\her_{\kappa^{+M}}^M$,
let  $N_X$ be least in the mouse order with $X\in N_X$.
Given sets $X,Y$,
we want to order $X<^*Y$
iff either $N_X$ is strictly below $N_Y$
in the mouse order,
or $N_X=N_Y$ and $X<^{N_X}Y$, where $<^{N_X}$ is the usual order of constructibility of $N_X$.
We will refine this approximation in what follows.

For the proof we use
the following definition,
which is based on standard arguments for absoluteness of iterability in the absence of a proper class model with a Woodin cardinal, like those in \cite[\S2]{cmip}, and like \cite[Lemma 2.1]{schindler_star}.
Its role here will be analogous to that in V\"a\"an\"anen-Welch \cite{power_set_Sigma_1_Card}:
\begin{dfn}\label{dfn:5.2}
 Let $N$ be a sound $1$-small premouse and $\kappa<\xi$ be  ordinals such that $L_\xi[N]\sats$ ``$\om_1$ exists and $\om_1\leq\kappa$''. Working in   $L_\xi[N]$, let $S_{N\kappa}$ be the tree of attempts to build, via finite approximations,
 a tuple \begin{equation*}\label{eqn:label_for_eqn_25}(\bar{N},\sigma,\Tt,\rho,\left<\rho_\alpha\right>_{\alpha+1<\lh(\Tt)},\iota,Q,\pi)\end{equation*} such that:
 \begin{enumerate}\item $\bar{N}$ is a structure in the language of premice, whose universe is $\omega$,
  \item
$\sigma:\bar{N}\to N$ is elementary,
\item $\Tt$ is a putative $\om$-maximal iteration tree on $\bar{N}$,
\item $\rho:\lh(\Tt)\to\kappa$ is order-preserving,
\item  $\rho_\alpha:\OR(M^{\Tt}_\alpha)\to\kappa$ is order-preserving, for each $\alpha+1<\lh(\Tt)$,
\item
 if $\lh(\Tt)=\alpha+1$
 then:
 \begin{enumerate}
  \item $\iota$ is  a strictly descending sequence through $\OR(M^{\Tt}_{\alpha})$,
  \item if $\alpha$ is a limit, then $Q\pins M^\Tt_\alpha$ is a Q-structure for $M(\Tt\rest\alpha)$ and
  $\pi:\OR^Q\to\kappa$ is order-preserving,
  \end{enumerate}
  \item if $\lh(\Tt)$ is a limit then:
  \begin{enumerate}
   \item
   $Q=\J_\gamma(M(\Tt))$
   for some ordinal $\gamma$,
and  either $\rho_{n+1}^Q<\delta(\Tt)\leq\rho_n^Q$ for some $n<\om$, or there is a failure of Woodinness of $\delta(\Tt)$
  definable over $Q$,
  \item $\pi:\OR^Q\to\kappa$ is order-preserving,
  \item  $\iota$ is a ranking into $\kappa$
  of the tree of attempts to build a $\Tt$-cofinal branch $c$ with $Q\ins M^{\Tt}_c$
  or $M^{\Tt}_c\pins Q$.

 \end{enumerate}
 \end{enumerate}
 Note that we can form $S_{N\kappa}$ as a tree on $\om\cross\max(\OR^N,\kappa)$.

We say that $N$ is \emph{$\kappa$-strongly iterable}
 in $L_\xi[N]$ iff $L_\xi[N]\sats$ ``there is a ranking of $S_{N\kappa}$ into the ordinals'',
 and \emph{strongly iterable}
 in $L_\xi[N]$ iff $N$ is $\om_1^N$-strongly iterable in $L_\xi[N]$.
\end{dfn}

\begin{proof}[Proof of Theorem~\ref{tm:good_wo_1-small}]
The case that $\kappa$ is a successor cardinal is easier. We deal with it first.

\begin{casetwo}
 $\kappa=\gamma^{+M}$ for some $M$-cardinal $\gamma\geq\om$.

 In this case, the restriction of $M$'s usual order of constructibility  is $(\Sigma_1(\her_\kappa\cup\{\kappa\}))^{\univ{M}}$-good. To see this, it suffices
 to show that $M|\kappa^{+M}$ (or equivalently, $\es^{M|\kappa^{+M}}$) is $\Sigma_1^{\univ{M}}(\her_\kappa^M\cup\{\kappa\})$.
 \begin{scasetwo}\label{scase:cutpoint_eta_above_lgcd}
  There is a cutpoint $\eta$ of $M$
  such that $\gamma\leq\eta<\kappa=\gamma^{+M}$.
  (Note this includes the case that $\om=\gamma=\eta<\kappa=\om_1^M$.
 Recall that $\eta$ is a \emph{cutpoint} of $M$
  iff there is no $E\in\es_+^M$ with $\crit(E)<\eta<\lh(E)$.)

  Fix such a cutpoint $\eta$. Let $x=M|\eta$. Then
  $M|\kappa$ is just the stack of sound  premice $N\in M$ such that $M|\eta\pins N$,  $\rho_\om^N\leq\gamma$,
  $\eta$ is a cutpoint of $N$ and there is an ordinal $\xi\in(\kappa,\kappa^{+M})$ such that $L_\xi[N]\sats$ ``$N$ is $\kappa$-strongly iterable'' (note that then $\om_1^M\leq\kappa\in L_\xi[N]$).
  This yields a definition of $N|\kappa=N|\gamma^{+M}$ which is $\Sigma_1^{\univ{M}}(\{x,\kappa\})$.

  We now define $M|\kappa^{+M}$
  (as a premouse).
  Since $\kappa$ is regular in $M$,
  $M|\kappa^{+M}$ is just the Jensen stack over $M|\kappa$,
  as computed in $M$. (This is the stack of all sound premice $P$ such that $M|\kappa\pins P$, $\rho_\om^P=\kappa$,
  and $P$ satisfies condensation;
  we don't need to assert any iterability here. See \cite[Fact 3.1]{V=HODX_pub}.) This is also $\Sigma_1^{\univ{M}}(\{x,\kappa\})$, and yields a $\Sigma_1^{\univ{M}}(\{x,\kappa\})$ definition of $M|\kappa^{+M}$, and hence
  of the usual order of constructibility over  $\her^M_{\kappa^{+M}}$. (Note  $\kappa$ might not be a cutpoint of $M$; equivalently, $\gamma$ might be measurable in $M$.)
 \end{scasetwo}
\begin{scasetwo}
 Otherwise (there is no such cutpoint).

 Set $x=M|\gamma$.
 For all $\eta\in[\gamma,\kappa)=[\gamma,\gamma^{+M})$,
 there is an extender $E\in\es^M$ with $\crit(E)<\gamma\leq\eta<\lh(E)$,
 and note that any such $E$ is $M$-total.
 By \cite[Corollary 2.18]{mim},
 \cite[Theorem 1.4]{V=HODX_pub}, it follows that for sound premice $N$
 such that $M|\gamma\ins N$
 and $\rho_\om^N=\gamma$,
 we  have $N\pins M|\kappa$
 iff there is an $M$-total extender $E\in M$ (not required to be in $\es^M$) such that $\mu=\crit(E)<\gamma$, $\Ult(M|\mu^{+M},E)$ is wellfounded
 and $N\pins\Ult(M|\mu^{+M},E)$.
 Note that this condition is $\Sigma_1^{\univ{M}}(\{x,\kappa\})$, and this suffices to define $M|\kappa=M|\gamma^{+M}$.

 From here we compute $M|\kappa^{+M}$ as before.
\end{scasetwo}

\end{casetwo}

 \begin{casetwo}\label{case:kappa_limit_card}
  $\kappa$ is a limit cardinal of $M$.

  In this case we will not use the  order of constructibility  of $M$ for our wellorder; instead we will use the order sketched just after the statement of Theorem~\ref{tm:good_wo_1-small}.

  \begin{scasetwo}\label{scase:limit_card_and_cutpoint}
   There is a cutpoint $\eta$
   of $M$ with $\delta<\eta<\kappa$, where $\delta$ is the supremum of $\om$ and all measurable cardinals of $M|\kappa$.

   Take $\eta$ such, and we may assume that $\eta=\theta^{+M}$ for some $\theta$. Set $x=M|\eta$. Working in $M$,
   say that a premouse $N$ is \emph{good} iff $M|\eta\pins N$, $\eta$ is a cutpoint of $N$,
   $\kappa<\OR^N$, $\rho_\om^N=\kappa$,
   $N$ is sound,
   $N\sats$ ``there are no measurable cardinals in the interval $[\eta,\kappa)$'',
   there is $\xi<\kappa^+$
   such that $L_\xi[N]\sats$ ``$N$ is strongly iterable'',
   and there is $\gamma\in(\kappa,\OR^N)$ such that $L_\gamma[N|\kappa]$ is admissible.
   Given a good $N$,
   let $\gamma^N$ be the least $\gamma$ such that $L_\gamma[N|\kappa]$ is admissible.

   \begin{clmtwo}\label{clm:N_good_analysis}
    Let $N$ be good.
   Let $(\Tt,\Uu)$ be the comparison of $M$ versus $N$, with $\om$-maximal trees. This can be executed working inside $M$. Moreover, $\Uu$ is trivial,
  and either:
    \begin{enumerate}
     \item[--] $N\ins M$ and $\Tt$ is trivial, or
     \item[--] $N\nins M$, $\Tt$ is non-trivial, and:
     \begin{enumerate}\item $\Tt$ is linear,
     \item $\Tt$ is above $\eta$, based on $M|\kappa$,
     and $\lh(\Tt)=\kappa+1$,
     \item $N\pins M^\Tt_\kappa$,
     \item
     $\Tt\in M$,
     \item $1\in\mathscr{D}^\Tt$,
     \item
    for every $\alpha+1<\lh(\Tt)$,  $\eta<\crit(E^\Tt_\alpha)$ and $M^\Tt_\alpha|\crit(E^\Tt_\alpha)\sats$ ``There are no measurable cardinals $\geq\eta$'',
    \item for all $\alpha+1\leq\beta\leq\kappa$,
     if $(\alpha+1,\beta]^\Tt$ does not drop in model
     then:
     \begin{enumerate}\item for all $\gamma\in(\alpha,\beta)$,
     $E^\Tt_\gamma$ is the unique
     $M^\Tt_\gamma$-total extender  $E\in\es_+(M^\Tt_\gamma)$
     with $\crit(E)=i^{*\Tt}_{\alpha+1,\gamma}(\crit(E^\Tt_\alpha))$,
     and \item if $\beta<\kappa$ then:
     \begin{enumerate}\item $\lh(E^\Tt_\beta)\leq i^{*\Tt}_{\alpha+1,\beta}(\lh(E^\Tt_\alpha))$ (where if $E^\Tt_\alpha=F^{M^\Tt_\alpha}$ then this just means that $\lh(E^\Tt_\beta)\leq\lh(F^{M^\Tt_\beta})$, which is true anyway),
     and \item $\lh(E^\Tt_\beta)<i^{*\Tt}_{\alpha+1,\beta}(\lh(E^\Tt_\alpha))$ iff $E^\Tt_\beta\neq i^{*\Tt}_{\alpha+1,\beta}(E^\Tt_\alpha)$ iff $\beta+1\in\dropset^\Tt$.
     \end{enumerate}
     \end{enumerate}
    \end{enumerate}
\end{enumerate}
Moreover,
the tree $\Tt$ mentioned above is uniquely determined.
   \end{clmtwo}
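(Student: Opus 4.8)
The plan is to prove the claim by comparing $N$ with $M$, a coiteration which can be carried out entirely inside $M$. Since $N$ is good, $M|\eta\pins N$, $\eta$ is a cutpoint of $N$, and $N\sats$``there are no measurable cardinals in $[\eta,\kappa)$'', and $M|\kappa$ has the analogous two properties, because the present subcase gives $\delta<\eta<\kappa$ with $\eta$ a cutpoint of $M$, so no extender of $\es^M$ can be indexed in $[\eta,\kappa)$; a routine argument then gives $N|\kappa=M|\kappa$. Hence $N$ and $M$ agree below $\kappa$, and a coiteration of them proceeds above $\kappa$ and uses only extenders of critical point $>\eta$, since $\eta$ remains a cutpoint of every $M^\Tt_\alpha$. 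The comparison is run inside $M$: the $N$-side branch choices come from the ranking of the tree $S$ witnessing that $N$ is strongly iterable in $L_\xi[N]$ (for the $\xi<\kappa^{+M}$ supplied by goodness, using the admissibility of $L_\gamma[N|\kappa]$ to form the relevant $Q$-structures and rankings internally), and the $M$-side is guided by the $Q$-structure strategy, which by $1$-smallness is internal to initial segments of $M$. This is the device used in \cite[\S2]{cmip} and \cite[Lemma 2.1]{schindler_star}, and plays the role that the analogous construction plays in \cite{power_set_Sigma_1_Card}.

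First I would show that in the coiteration the $N$-side is trivial and the process terminates with $N$ an initial segment of the resulting tree $\Tt$ on $M$ — the standard behaviour when one compares a sound premouse projecting to $\kappa$ with a mouse that agrees with it below $\kappa$, with $M$ doing all the moving since $\OR^M>\kappa^{+M}$, and with the $1$-smallness of $M$ ruling out any Woodin-type disagreement that would obstruct termination and making the cofinal branches unique and internal. If $\Tt$ is trivial we land in the first alternative $N\ins M$. If $\Tt$ is non-trivial, then: $\Tt$ is above $\eta$ and based on $M|\kappa$ (the disagreements occur strictly above $M|\kappa=N|\kappa$, while $\eta$, being a cutpoint of $M$, remains a cutpoint of every $M^\Tt_\alpha$, so each critical point used is $>\eta$); $\Tt$ is linear, since the extenders used are a single measurable of $M$ together with its images, which do not overlap one another; and $1\in\dropset^\Tt$, since the first extender must be applied to a proper initial segment of $M$ (the part of $M$ above that segment does not survive the comparison against the sound mouse $N$). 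Length $\lh(\Tt)=\kappa+1$ and $N\pins M^\Tt_\kappa$ then follow by counting stages in the comparison, and $\Tt\in M$ since the whole process was executed inside $M$.

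Next I would read off the remaining clauses. Since $\Tt$ is above the cutpoint $\eta$, clause (vi) is immediate: each $\crit(E^\Tt_\alpha)>\eta$, and it is the least measurable $\ge\eta$ of $M^\Tt_\alpha$, so $M^\Tt_\alpha|\crit(E^\Tt_\alpha)\sats$``there are no measurables $\ge\eta$'', because the least disagreement with $N$ is always seated at such a critical point. On a non-dropping branch one keeps hitting the image of that same measurable under the $*$-maps, forcing $E^\Tt_\gamma$ to be the unique $M^\Tt_\gamma$-total extender with $\crit(E^\Tt_\gamma)=i^{*\Tt}_{\alpha+1,\gamma}(\crit(E^\Tt_\alpha))$, which is clause (vii)(a); comparing the copied extender $i^{*\Tt}_{\alpha+1,\beta}(E^\Tt_\alpha)$ with whatever the least-disagreement rule against $N$ selects yields clause (vii)(b), namely $\lh(E^\Tt_\beta)\le i^{*\Tt}_{\alpha+1,\beta}(\lh(E^\Tt_\alpha))$ always, with strict inequality exactly when $N$'s extender at the relevant index differs from the copied one, which is exactly when the comparison is forced to drop at $\beta+1$. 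Uniqueness of $\Tt$ follows from uniqueness of the $\omega$-maximal ($Q$-structure) iteration strategy for $1$-small premice, so $\Tt$ is determined by $M$ and $N$.

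The main obstacle is precisely the bookkeeping in clause (vii): verifying carefully that on a non-dropping branch $\Tt$ is exactly the ``lazy'' linear iteration that repeatedly hits the image of a fixed measurable, and locating exactly the stages at which the least-disagreement rule against $N$ forces a drop. This is a routine but somewhat intricate comparison analysis; the one genuinely delicate ingredient — that the whole comparison, including the choice of cofinal branches at limit stages, can be executed inside $M$ — is exactly what strong iterability in $L_\xi[N]$ and the $1$-smallness of $M$ are designed to provide.
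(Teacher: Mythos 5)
Your overall strategy (compare $N$ with $M$ inside $M$, using strong iterability of $N$ for the $N$-side branches and $1$-smallness for uniqueness) is the intended one --- the paper itself only says the claim ``follows from a routine analysis of comparison of $M$ with $N$, which can be executed working inside $M$''. But your execution contains a fatal error at the very first step: you assert that $\eta$ being a cutpoint of $M$ implies ``no extender of $\es^M$ can be indexed in $[\eta,\kappa)$'', conclude $N|\kappa=M|\kappa$, and then run the whole comparison above $\kappa$. Being a cutpoint only excludes extenders \emph{overlapping} $\eta$ (critical point below, index above); it says nothing about extenders with both critical point and index in $(\eta,\kappa)$. Likewise ``no measurable cardinals in $[\eta,\kappa)$'' (true of $M|\kappa$ since $\delta<\eta$, and of $N$ by goodness) only excludes \emph{total} measures: both $\es^M$ and $\es^N$ can carry partial extenders indexed cofinally in $\kappa$ with critical points in $(\eta,\kappa)$, and these are exactly what produce the least disagreements. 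Indeed the claim you are proving describes a nontrivial linear tree of length $\kappa+1$ that is \emph{based on $M|\kappa$} and drops at step $1$ --- all the comparison activity is strictly below (the images of) $\kappa$ --- and the very next claim in the paper explicitly treats good $N,N'$ with $N|\kappa\neq N'|\kappa$, so good premice need not agree with $M$ below $\kappa$. Under your premise the second alternative would be vacuous ($N\ins M$ always), which would collapse the wellorder $<^*$ built from this claim to the ordinary constructibility order, something the paper explicitly notes does not happen when $\kappa$ is a limit cardinal of $M$.

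What actually happens: the least disagreement between $M$ and $N$ sits at some index $\gamma_0\in(\eta,\kappa)$; the $N$-side is shown trivial much as in the proof of Claim \ref{clm:N_good_analysis_2}, and $E^\Tt_0=\es^M(\gamma_0)$ has $\crit(E^\Tt_0)=\mu_0\in(\eta,\gamma_0)$. Since $\mu_0>\delta$, $\mu_0$ is not measurable in $M$, so $E^\Tt_0$ is partial over $M$ and must be applied to the longest initial segment of $M$ that it measures --- this is the reason $1\in\dropset^\Tt$, not that ``the part of $M$ above that segment does not survive the comparison''. The later extenders are partial measures of the dropped iterates and their images (not ``a single measurable of $M$ together with its images''; $M$ has no measurables in $(\eta,\kappa)$), and clause (vii) is precisely the bookkeeping of when the least-disagreement rule forces further drops along such a linear iteration. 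The analysis needs to be redone with the comparison located in $(\eta,\kappa)$ rather than above $\kappa$.
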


The claim above follows from standard analysis of the comparison, combined with the fact that  $M$ itself is the universe computing the comparison and that $\eta$ is a cutpoint of the models, and neither model has measurable cardinals in the interval $[\eta,\kappa)$.
   The following claim is now a straightforward consequence:
   \begin{clmtwo}
    Let $N,N'$ be good, with $N,N'\nins M$, and let $\Tt,\Tt'$ be the respective trees as in Claim~\ref{clm:N_good_analysis}.
    Then either:
    \begin{enumerate}
     \item $N|\kappa=N'|\kappa$
     and $\Tt=\Tt'$
     and either $N\ins N'$ or $N'\ins N$,
     or
     \item $N|\kappa\neq N'|\kappa$, there is some (uniquely determined) $\alpha<\kappa$
     such that
     \begin{equation*}\label{eqn:label_for_eqn_26}\Tt\rest(\alpha+1)=\Tt'\rest(\alpha+1)\end{equation*}
     and $E^\Tt_\alpha\neq E^{\Tt'}_\alpha$,
     and if $\lh(E^\Tt_\alpha)<\lh(E^{\Tt'}_\alpha)$,
     then $\Tt\rest[\alpha,\kappa+1)$ can be considered as a tree on $N'|\kappa$,
     with properties like those of the trees in Claim~\ref{clm:N_good_analysis},
     and  $\Tt\rest[\alpha,\kappa+1)\in N'|\gamma^{N'}$
     and $N\in N'|\gamma^{N'}$;
     otherwise $\lh(E^{\Tt'}_\alpha)<\lh(E^\Tt_\alpha)$ and it is symmetric.
    \end{enumerate}

   \end{clmtwo}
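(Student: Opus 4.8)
The plan is to apply Claim \ref{clm:N_good_analysis} to each of $N$ and $N'$, obtaining the linear $\om$-maximal trees $\Tt$ and $\Tt'$ on $M$ --- both above $\eta$, based on $M|\kappa$, of length $\kappa+1$, each having its first drop at stage $1$ --- and then to compare $\Tt$ and $\Tt'$ stage by stage. The leverage is that the last two clauses of Claim \ref{clm:N_good_analysis} make each of $\Tt,\Tt'$ canonical: below a successor stage $\gamma+1$ not immediately followed by a drop, $E_\gamma$ is forced to be the unique total extender of the current model with critical point the appropriate image of the previous critical point; a drop occurs exactly when one uses instead a proper initial segment of that extender; limit models are direct limits; and the first drop is at $1$. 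So first I would check, by an easy induction using these clauses, that $\Tt$ and $\Tt'$ agree --- tree order, drop set, models and extenders --- below the least stage $\alpha$ (if there is one) at which $E^\Tt_\alpha\neq E^{\Tt'}_\alpha$, writing $M^\Tt_\beta=M^{\Tt'}_\beta=P_\beta$ for $\beta\leq\alpha$.

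If there is no such $\alpha<\kappa$, then, limit models being determined, $\Tt=\Tt'$, so $M^\Tt_\kappa=M^{\Tt'}_\kappa$; since $N,N'\pins M^\Tt_\kappa$ with $\OR^N,\OR^{N'}>\kappa$, it follows that $N|\kappa=(M^\Tt_\kappa)|\kappa=N'|\kappa$ and, comparing heights, $N\ins N'$ or $N'\ins N$. This is case (1). Otherwise fix $\alpha<\kappa$ least with $E^\Tt_\alpha\neq E^{\Tt'}_\alpha$ and put $P=P_\alpha$. The two extenders are distinct members of $\es_+(P)$ or active extenders of proper initial segments of $P$, and by the length-versus-drop clause they cannot both be the unique $P$-total extender with the prescribed critical point (which would force them equal), so at least one side drops at $\alpha+1$; say $\lh(E^\Tt_\alpha)<\lh(E^{\Tt'}_\alpha)$ and $\alpha+1\in\mathscr{D}^\Tt$, the reverse case being symmetric. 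Since comparison lengths are strictly increasing, $M^\Tt_{\alpha+1}$ and $M^{\Tt'}_{\alpha+1}$ already disagree below $\lh(E^{\Tt'}_\alpha)<\kappa$, and that disagreement is carried down to level $\kappa$ by the length-$\kappa$ tails of the two trees, so $N|\kappa\neq N'|\kappa$.

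For the rest of case (2), the observation to make is that $\lh(E^\Tt_\alpha)<\lh(E^{\Tt'}_\alpha)$ forces the initial segment $P|\lh(E^{\Tt'}_\alpha)$ of $P$ --- which already determines the first step of $\Tt\rest[\alpha,\kappa+1)$ --- to be undisturbed by the step $E^{\Tt'}_\alpha$ of $\Tt'$, so $P|\lh(E^{\Tt'}_\alpha)\ins M^{\Tt'}_{\alpha+1}$; as every later extender of $\Tt'$ has length $>\lh(E^{\Tt'}_\alpha)$, this persists to give $P|\lh(E^{\Tt'}_\alpha)\ins M^{\Tt'}_\kappa$, whence --- using $\lh(E^{\Tt'}_\alpha)\leq\kappa$ and $\OR^{N'}>\kappa$ --- $P|\lh(E^{\Tt'}_\alpha)$ is an initial segment of $N'|\kappa$. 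Consequently $\Tt\rest[\alpha,\kappa+1)$, all of whose later extenders have length $<\kappa$, reorganizes along this inclusion into an $\om$-maximal tree with base an initial segment of $N'|\kappa$, and it retains the properties of Claim \ref{clm:N_good_analysis}, those being local to the tree. Finally, past stage $\alpha$ this tree is built by the canonical rule --- the unique total extender with the prescribed critical point, plus the further drops forced by the comparison --- applied inside $N'|\kappa$; since $L_{\gamma^{N'}}[N'|\kappa]$ is admissible, the whole construction closes off below $\gamma^{N'}$, so that $\Tt\rest[\alpha,\kappa+1)$, its last model $M^\Tt_\kappa$, and $N\ins M^\Tt_\kappa$ all lie in $N'|\gamma^{N'}$.

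The step I expect to be the main obstacle is this last one: making precise that the canonical construction of $\Tt\rest[\alpha,\kappa+1)$ really can be run inside the admissible structure over $N'|\kappa$. A priori the rule defining $\Tt$ refers to $N$ --- it is the comparison of $M$ against $N$ --- and $N$ is not a member of $N'|\kappa$. The point is that, past the divergence at $\alpha$, the iteration is confined to the copy of $P$ that $N'|\kappa$ already carries via the longer extender $E^{\Tt'}_\alpha$, so its extenders are read off from $N'|\kappa$ and its initial segments alone, $N$ re-entering only as the final initial segment of $M^\Tt_\kappa$ that the construction produces; admissibility of $L_{\gamma^{N'}}[N'|\kappa]$ then bounds where this closes off. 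Reconciling ``comparison against $N$'' with ``computable over $N'|\kappa$'' is where the real content lies; everything else is routine manipulation of the canonical trees.
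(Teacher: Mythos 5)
The paper offers no proof of this claim at all (it is asserted as ``a straightforward consequence'' of the preceding claim), so your proposal has to stand on its own. Its architecture is sound and matches what the paper must intend: the uniqueness clauses of Claim \ref{clm:N_good_analysis} force $\Tt$ and $\Tt'$ to agree up to the least stage $\alpha$ with $E^\Tt_\alpha\neq E^{\Tt'}_\alpha$; if there is no such stage you get case (1); if there is, then (since between drops the extender is the unique total one with the prescribed critical point) at least one side drops at $\alpha+1$, and the index $\lh(E^\Tt_\alpha)$ --- at which $M^\Tt_\kappa$ is passive while $M^{\Tt'}_\kappa$ still carries $E^\Tt_\alpha$ --- witnesses $N|\kappa\neq N'|\kappa$. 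All of that is fine, modulo the notational point that what survives into $M^{\Tt'}_{\alpha+1}$ is $P||\lh(E^{\Tt'}_\alpha)$, not $P|\lh(E^{\Tt'}_\alpha)$ (the active extender is consumed); fortunately $E^\Tt_\alpha$ sits at an index strictly below $\lh(E^{\Tt'}_\alpha)$, so it does lie on $\es(N'|\kappa)$.

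The genuine gap is exactly where you flag it, and the mechanism you propose for closing it does not work as stated. You say the tail iteration's ``extenders are read off from $N'|\kappa$ and its initial segments alone,'' but this is false: the indices $\lh(E^\Tt_\beta)$ for $\beta\in(\alpha,\kappa)$ are cofinal in $\kappa$ and these extenders live on the iterated models $M^\Tt_\beta$, which diverge from $N'|\kappa$ above $\lh(E^\Tt_\alpha)$ (indeed their levels converge to $N|\kappa$, not $N'|\kappa$). More importantly, the recursion rule selecting $E^\Tt_\beta$ is not intrinsic to the tree being built: between drops it is (unique total extender with the prescribed critical point), but at a drop stage the choice of the \emph{shorter} extender is dictated by the least disagreement with $N$ --- and $N$ is precisely what you do not yet have inside $L_{\gamma^{N'}}[N'|\kappa]$. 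So the $\Sigma_1$-recursion over the admissible structure that you invoke is not actually set up, and the appeal to admissibility only pays off once you have exhibited a $\Sigma_1$ definition of the recursion from parameters in $N'|\gamma^{N'}$. You would need either to reformulate the extender-selection rule without reference to $N$ (e.g.\ via the drop pattern and the data already produced), or to argue for $\Tt\rest[\alpha,\kappa+1)\in N'|\gamma^{N'}$ and $N\in N'|\gamma^{N'}$ by some other route. A second, smaller, unverified point: for the re-basing to produce literally the same models you need the dropped-to model $M^{*\Tt}_{\alpha+1}$ to be an initial segment of $P||\lh(E^{\Tt'}_\alpha)=N'||\lh(E^{\Tt'}_\alpha)$, i.e.\ that the drop at $\alpha+1$ lands below $\lh(E^{\Tt'}_\alpha)$; this is plausible (both extenders have the same critical point, the least measurable of $P$ above $\eta$) but is not checked in your sketch.
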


   And the next claim is also a straightforward consequence of the previous two:
   \begin{clmtwo}
  $\in$ wellorders the good premice.
   \end{clmtwo}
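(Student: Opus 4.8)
The plan is to reduce ``$\in$ wellorders the good premice'' to the single assertion that $\in$ is \emph{trichotomous} on them, and then to read trichotomy off from Claim \ref{clm:N_good_analysis} together with the preceding comparison claim. Wellfoundedness (and irreflexivity) of $\in$ is immediate from Foundation; and every good premouse $N$ has $\rho_\omega^N=\kappa<\OR^N$, so $\OR^N<\kappa^{+M}<\OR^M$, whence the good premice form a set and $\in$ is set-like on them. Since a wellfounded, irreflexive, trichotomous relation admits no finite cycles and is therefore a strict linear order, it then suffices to show: for all distinct good premice $N,N'$, either $N\in N'$ or $N'\in N$.

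I would prove trichotomy by splitting on how many of $N,N'$ are initial segments of $M$. If $N\ins M$ and $N'\ins M$, then $N$ and $N'$ are initial segments of $M$, hence $\ins$-comparable, and the one of smaller ordinal height is a proper initial segment of, hence an element of, the other. If neither $N$ nor $N'$ is an initial segment of $M$, this is exactly the preceding comparison claim: in its first alternative $N\ins N'$ or $N'\ins N$, so $N\in N'$ or $N'\in N$; in its second alternative we may assume $N\in N'|\gamma^{N'}$, and since $\kappa<\gamma^{N'}<\OR^{N'}$ the structure $N'|\gamma^{N'}$ is a transitive initial segment of $N'$, so $N\in N'|\gamma^{N'}$ yields $N\in N'$. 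Finally, if exactly one of them, say $N$, is an initial segment of $M$ while $N'\nins M$: let $\Tt'$ be the unique tree on $M$ furnished by Claim \ref{clm:N_good_analysis} for $N'$, so $N'\pins M^{\Tt'}_\kappa$, with $\Tt'$ linear, above the cutpoint $\eta$, based on $M|\kappa$, $1\in\mathscr{D}^{\Tt'}$, and of length $\kappa+1$. One co-iterates $N$ against $N'$; running the comparison-of-$1$-small-mice analysis underlying Claim \ref{clm:N_good_analysis}, now with a genuine initial segment of $M$ on one side, shows that the co-iteration moves only the $M$-generated side and situates $N$ and $N'$ relative to one another exactly as in the two alternatives of the comparison claim, so again $N\in N'$ or $N'\in N$.

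The main obstacle is this last, mixed case. Claim \ref{clm:N_good_analysis} and the comparison claim are phrased for trees genuinely on $M$, whereas here one side is a fixed initial segment $N$ of $M$ with $\OR^N>\kappa$, and $\Tt'$ is a tree on $M$ with a drop at its first step; consequently $N$ and $M^{\Tt'}_\kappa$ need not be $\ins$-comparable at all — they disagree already at the first divergence point of $\Tt'$, which lies below $\kappa<\OR^N$ — so one must actually carry out the co-iteration of $N$ with $N'$ and verify that its outcome is governed by the same initial-segment-condition and condensation bookkeeping used to prove Claim \ref{clm:N_good_analysis} (whose proof already co-iterates $M$ with a good premouse, i.e.\ already runs a comparison with a static side). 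This is routine given those proofs; Cases~1 and~2 are pure bookkeeping on top of the two preceding claims, and the concluding step ``trichotomous $+$ wellfounded $+$ set-like $\Rightarrow$ wellorder'' is formal.
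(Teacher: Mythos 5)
Your proof is correct and is essentially the argument the paper intends: the paper offers no proof here beyond the remark that the claim ``is also a straightforward consequence of the previous two'', and your reduction to trichotomy of $\in$ on the good premice --- with the two preceding claims supplying the cases where both, respectively neither, of $N,N'$ is an initial segment of $M$ --- is exactly that derivation. Your identification of the mixed case ($N\ins M$, $N'\nins M$) as the only point of substance is apt, since it is not literally an instance of either preceding claim; the one step worth making explicit when you re-run the comparison there is the passage from ``$N'$ is a proper initial segment of the last model of the (immediately dropping) tree on $N$'' to ``$N'\in\univ{N}$'', which uses the same mechanism as the clause ``$N\in N'|\gamma^{N'}$'' in the second claim and is available here because $N|\kappa=M|\kappa$ and goodness of $N$ supplies the admissible level $\gamma^{N}<\OR^{N}$.
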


   Now given $X\in\her^M_{\kappa^{+M}}$, let $N_X$ be the $\in$-least good premouse $N$ with $X\in N$.
   We define a good wellorder $<^*$
   over $\her^M_{\kappa^{+M}}$
   by setting $X<^*Y$ iff either:
   \begin{enumerate}
    \item[--] $N_X\in N_{Y}$, or
    \item[--] $N_X=N_Y$ and $X<^{N_X}Y$ (where $<^{N_X}$ is the order of constructibility of $N_X$).
   \end{enumerate}

   \begin{clmtwo}
    $<^*$ is a $\Sigma_1^{\univ{M}}(\{x,\kappa\})$-good wellorder of $\her^M_{\kappa^{+M}}$.\end{clmtwo}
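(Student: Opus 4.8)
The plan is to verify, in turn, that $<^*$ is a linear order whose domain is all of $\her^M_{\kappa^{+M}}$, that it is wellfounded of order type $\kappa^{+M}$, and that $<^*$ together with its initial‑segment function is $\Sigma_1^{\univ{M}}(\{x,\kappa\})$; these three facts are exactly what it means for $<^*$ to be a $\Sigma_1^{\univ{M}}(\{x,\kappa\})$ good wellorder of $\her^M_{\kappa^{+M}}$.

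First I would check that $N_X$ is defined for every $X\in\her^M_{\kappa^{+M}}$, i.e.\ that there is a good premouse containing $X$. Given $X$, fix $\beta<\kappa^{+M}$ with $X\in M|\beta$, $\rho_\om^{M|\beta}=\kappa$, and $\beta$ large enough that $L_\gamma[M|\kappa]$ is admissible for some $\gamma\in(\kappa,\beta)$; such $\beta$ are cofinal below $\kappa^{+M}$ since $\kappa$ is an $M$‑cardinal with $\kappa^{+M}<\OR^M$ and the least $M|\kappa$‑admissible $>\kappa$ is below $\kappa^{+M}$. Then $M|\beta$ is good: all clauses are immediate from $\delta<\eta<\kappa<\beta$, from $\eta$ being a cutpoint of $M$ (hence of $M|\beta$), and from the choice of $\beta$, except for the strong iterability clause. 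For that clause, the tree $S$ built over $M|\beta$ inside $L_\xi[M|\beta]$ is wellfounded, because $M|\beta$ is genuinely $\om$‑maximally iterable (being a proper segment of the iterable $M$) and, by the hypothesis of the theorem, no relevant $\J_\eta[M(\Tt)]$ carries a Woodin, so along every attempted tree the Q‑structures and cofinal branches are located correctly; and since $\OR^{M|\beta}\leq\kappa$, both $S$ and a ranking of it appear in $L_\xi[M|\beta]$ for some $\xi<\kappa^{+M}$. (This is precisely the absoluteness‑of‑iterability machinery underlying Claim~\ref{clm:N_good_analysis}, now applied to segments of $M$.) Once $N_X$ is defined everywhere, linearity and wellfoundedness of $<^*$ follow from the earlier claims: $\in$ wellorders the good premice and $<^N$ wellorders $\univ{N}$, and $<^*$ is their lexicographic combination, so a hypothetical $<^*$‑descending $\om$‑sequence would give an $\in$‑non‑increasing sequence of good premice, eventually equal to some fixed good $N$, and then a $<^N$‑descending $\om$‑sequence in $N$ — impossible. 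The order type of $<^*$ is $\kappa^{+M}$: each good $N$ has $\OR^N<\kappa^{+M}$ (since $\rho_\om^N=\kappa$ and $N$ sound force $N$ to have $M$‑cardinality $\kappa$), so there are at most $\kappa^{+M}$ good premice, each contributing order type at most $\kappa$, whence $\ot(<^*)\leq\kappa\cdot\kappa^{+M}=\kappa^{+M}$; and $\ot(<^*)\geq\kappa^{+M}$ since $\dom(<^*)=\her^M_{\kappa^{+M}}$, which has $M$‑cardinality $\kappa^{+M}$.

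For the definability I would first show that ``$N$ is good'' is $\Delta_1^{\univ{M}}(\{x,\kappa\})$. The clauses $M|\eta\pins N$, $\rho_\om^N=\kappa$, soundness, ``$N\sats$ no measurables in $[\eta,\kappa)$'', and the admissibility clause are plainly $\Delta_1(\{x,\kappa\})$; the quantifier ``$\exists\xi<\kappa^{+M}$'' may be replaced by ``$\exists\xi\,\exists\pi\,(\pi\colon\kappa\to\xi$ a bijection$)$'', which is $\Sigma_1(\{\kappa\})$; and ``$L_\xi[N]\sats$ \,`$N$ is strongly iterable'\,'' is $\Sigma_1$ in $\xi,N$ (uniform $\Delta_1$ satisfaction over $L_\xi[N]$). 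This yields the $\Sigma_1$ form. For the $\Pi_1$ form I use the standard complementary feature of $S$: $N$ fails to be strongly iterable exactly when $S$ is illfounded, which is witnessed by some $\xi<\kappa^{+M}$ with $\om_1$ existing in $L_\xi[N]$ together with an infinite branch through the copy of $S$ there — a $\Sigma_1(\{x,\kappa\})$ condition — while conversely, if $S$ is wellfounded a ranking of it already appears in $L_\xi[N]$ for $\xi$ a little past $\om_1^{L_\xi[N]}$, hence below $\kappa^{+M}$. Thus ``$N$ good'' is $\Delta_1(\{x,\kappa\})$, and therefore so is ``$N=N_X$'', which asserts that $N$ is good, $X\in N$, and there is no good $N'\in N$ with $X\in N'$ (the last quantifier bounded by $N$ with $\Delta_1$ matrix). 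Consequently $X<^*Y$ is equivalent to $\exists N\,\exists N'\,[\,N=N_X\wedge N'=N_Y\wedge(N\in N'\vee(N=N'\wedge X<^N Y))\,]$, which is $\Sigma_1(\{x,\kappa\})$ (indeed $\Delta_1$, since $<^*$ is a wellorder of the $\Sigma_1(\{\kappa\})$ set $\her^M_{\kappa^{+M}}$). Finally, for goodness of the wellorder I note that each proper initial segment $\{Y:Y<^*X\}$ has $<^*$‑order type $<\kappa^{+M}$, hence $M$‑cardinality $\leq\kappa$, hence is an element of $\her^M_{\kappa^{+M}}$; and $A=\{Y:Y<^*X\}$ iff $\exists N\,[\,N=N_X\wedge A=\bigl(\bigcup\{\,\univ{N'}:N'\in N\text{ is good}\,\}\bigr)\cup\{\,Y\in\univ{N}:Y<^N X\,\}\,]$, which is $\Sigma_1(\{x,\kappa\})$ because the displayed set is computed from $N$ using only quantifiers bounded in $N$.

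I expect the main obstacle to be exactly the point that ``$N$ is good'' is $\Delta_1$: pinning down that strong iterability is always witnessed below $\kappa^{+M}$, and that its failure is itself $\Sigma_1$, is where the no‑Woodin hypothesis of the theorem and the design of the tree $S$ do the real work; granting that, everything else is bookkeeping on top of the earlier claims.
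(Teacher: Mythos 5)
Your proposal is correct on the bookkeeping (totality of $X\mapsto N_X$, linearity, wellfoundedness, and the goodness of $<^*$ are all handled more explicitly than in the paper), but it diverges from the paper's argument at the one point where real work is needed: the $\Sigma_1$-definability of the minimality clause ``$N=N_X$''. The paper handles the implicit universal quantifier ``there is no good $N'\in N$ with $X\in N'$'' by invoking the preceding comparison claims: every good $N'\in N$ with $N'|\kappa\neq N|\kappa$ occurs as a proper segment of $M^{\Tt}_\kappa$ for a tree $\Tt\in N$ of the special form of Claim \ref{clm:N_good_analysis}, so $N$ can identify, by a fixed first-order formula over $N$ itself, the collection of good premice that are elements of $N$. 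Minimality of $N_X$ thus becomes ``$N\sats\varphi(X)$'', a condition bounded in $N$, and the whole definition of $<^*$ is an honest $\Sigma_1$ assertion. You instead try to make ``$N$ is good'' outright $\Delta_1^{\univ{M}}(\{x,\kappa\})$ and then place a universal quantifier $\forall N'\in N$ in front of its ($\Sigma_1$) negation. That is a genuinely different route, and it is where the gap lies.

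Two concrete problems. First, a bounded universal quantifier in front of a $\Sigma_1$ matrix is $\Sigma_1$ only granted $\Sigma_1$-collection or an explicit bound on the witnesses; $\univ{M}$ is merely the universe of a $1$-small mouse satisfying ``$\kappa^+$ exists'', and you neither verify collection nor bound the witnesses. (This is repairable: the non-goodness witnesses --- infinite branches through the trees $S$ --- have hereditary size $\leq\kappa$ and so can all be sought inside a single transitive, $\om$-closed set such as $\her^M_{\kappa^{+M}}$, but that has to be built into the formula.) Second, your $\Pi_1$ form of ``good'' is asserted rather than proved: you need that the failure of ``$\exists\xi\,(L_\xi[N]\sats$ there is a ranking of $S$)'' with $\xi<\kappa^{+M}$ is equivalent to genuine illfoundedness of $S$ in $\univ{M}$. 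One direction needs that a truly wellfounded $S$ --- a tree of size $\kappa$, hence of rank $<\kappa^{+M}$ --- acquires a ranking at some level $L_\xi[N]$ with $\xi<\kappa^{+M}$; the ranking appears a little past the rank of $S$, not ``a little past $\om_1^{L_\xi[N]}$'' as you write. The other direction needs the illfounding branch to lie in the search space being quantified over. These points can be fixed, but as written the definability argument does not go through, and it bypasses entirely the comparison analysis that the paper's proof of this claim actually rests on.
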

    \begin{proof}
     This is straightforward;
     just note that if $N,N'$ are good and $N'\in N$
     with $N|\kappa\neq N'|\kappa$,
     then in fact $N'\pins M^{\Tt}_\kappa$
     for some $\Tt\in N$ as described above.
     So $N$ can identify the collection of good premice $N'\in N$ by making use of this.
     Thus, we get that $X<^*Y$ iff there are good premice $N,P$ with $X\in N$ and $Y\in P$,
     and $N$ thinks
      (using the preceding discussion) that there is no good premouse $N'\in N$ with $X\in N'$,
      and likewise $P$ thinks there is no good $P'\in P$ with $Y\in P'$
      (this ensures that $N=N_X$ and $P=N_Y$),
      and either $N\in P$, or $N=P$ and $X<^NY$.
    \end{proof}

  \end{scasetwo}

  \begin{scasetwo} Otherwise.

   Fix an $M$-cardinal $\eta$
   such that $\delta<\eta<\kappa$,
   where $\delta$ is the supremum of the $M$-measurables which are $<\kappa$
   (by the subcase hypothesis, there are some),
   and $\eta=\theta^{+M}$ for some $\theta$. Set $x=M|\eta$.
   Working in $M$, we define \emph{good} premice
   as before (from the parameter $M|\eta$), except that we don't demand that $\eta$ be a cutpoint of $N$.

   \begin{clmtwo}\label{clm:N_good_analysis_2}
    Let $N$ be good. Then either:
    \begin{enumerate}
     \item[--] $N\ins M$, or
     \item[--] there is an $\om$-maximal  iteration tree $\Tt$ on $M$ such that:
     \begin{enumerate}
     \item $\eta<\lh(E^\Tt_0)$, $\Tt$ is based on $M|\kappa$,
     and $\lh(\Tt)=\kappa+1$,
     \item $N\ins M^\Tt_\kappa$,
     \item
     $\Tt\in M$,
     \item if $\crit(E^\Tt_\alpha)<\eta$ then $\pred^\Tt(\alpha+1)=0$ and $\alpha+1\notin\dropset^{\Tt}$,
    \item if $\crit(E^\Tt_\alpha)\geq\eta$ then $\pred^\Tt(\alpha+1)=\alpha$,
     \item\label{item:overstated} if $[0,\alpha+1]^\Tt\cap\dropset^\Tt=\emptyset$
     then $\crit(i^\Tt_{0,\alpha+1})<\eta$,
     \item
if $\eta\leq\crit(E^\Tt_\alpha)$ then $M^\Tt_\alpha||\lh(E^\Tt_\alpha)\sats$ ``There are no measurable cardinals $>\delta$'',
    and
    \item for all $\alpha+1\leq\beta\leq\kappa$,
     if $\eta\leq\crit(E^\Tt_\alpha)$ and $(\alpha+1,\beta]^\Tt$ does not drop in model
     then:
     \begin{enumerate}\item for all $\gamma\in(\alpha,\beta)$,
     $E^\Tt_\gamma$ is the unique
     $M^\Tt_\gamma$-total extender  $E\in\es_+(M^\Tt_\gamma)$
     with $\crit(E)=i^{*\Tt}_{\alpha+1,\gamma}(\crit(E^\Tt_\alpha))$,
     and \item if $\beta<\kappa$ then:
     \begin{enumerate}\item $\lh(E^\Tt_\beta)\leq i^{*\Tt}_{\alpha+1,\beta}(\lh(E^\Tt_\alpha))$,
     and
     \item if $\pred^\Tt(\beta+1)=\beta$
     then
     \begin{equation*}\label{eqn:label_for_eqn_27} \lh(E^\Tt_\beta)<i^{*\Tt}_{\alpha+1,\beta}(\lh(E^\Tt_\alpha))\iff E^\Tt_\beta\neq i^{*\Tt}_{\alpha+1,\beta}(E^\Tt_\alpha)\iff\beta+1\in\dropset^\Tt.\end{equation*}
     \end{enumerate}
     \end{enumerate}
     \item for all $\alpha+1<\kappa$,
     if $\mu=\crit(E^\Tt_\alpha)<\eta$
     then letting $\mu'$
     be least such that $\mu'\geq\eta$ and $\mu'$ is measurable in $M^\Tt_{\alpha+1}$, then $\mu'\leq i^{\Tt}_{0,\alpha+1}(\mu)$, and letting
      $E\in\es_+(M^\Tt_{\alpha+1})$
     be the unique extender which is $M^\Tt_{\alpha+1}$-total
     with $\crit(E)=\mu'$, we have:
     \begin{enumerate}\item
     $\lh(E^\Tt_{\alpha+1})\leq\lh(E)$, and
     \item if $\pred^\Tt(\alpha+2)=\alpha+1$
     then
     \begin{equation*}\label{eqn:label_for_eqn_28} \lh(E^\Tt_{\alpha+1})<\lh(E)\iff E^\Tt_{\alpha+1}\neq E\iff \alpha+2\in\dropset^\Tt.\end{equation*}
     \end{enumerate}
    \end{enumerate}
\end{enumerate}
Moreover, in case $N\nins M$,
the tree $\Tt$ mentioned above is uniquely determined.
   \end{clmtwo}
   \begin{proof}
   We use an argument from \cite{mim} (also see \cite{extmax}), using a property related to a feature of $K$ (see \cite{maxcore}).
   Let $(\Tt,\Uu)$ be the comparison of $(M,N)$.
   The main thing is to see that  $\Uu$ is trivial,
   and for this, we need to see that $\Uu$ is above $\eta$.
   But otherwise, letting $\alpha$ be least (or in fact any ordinal) such that $\crit(E^\Uu_\alpha)<\eta$,
   we get that $E^\Uu_\alpha\in M$
   and $(M^\Tt_\alpha||\lh(E^\Uu_\alpha),E^\Uu_\alpha)$
   is a premouse. By \cite[Theorem 3.8]{extmax}, and since $\Tt\in M$, it follows that $E^\Uu_\alpha\in\es_+(M^\Tt_\alpha)$, so $E^\Uu_\alpha$ was not causing a disagreement, a contradiction.
   \end{proof}

   Note also that $\Tt$ can only use finitely many extenders $E$ with $\crit(E)<\eta$,
    since $\Tt$ is linear in the intervals between those $E$'s. Thus, we again get that if $N,N'$ are good with $N|\kappa\neq N'|\kappa$,
   then letting $\Tt,\Tt'$ be the corresponding trees on $M$, either $\Tt$ can be converted into an essentially equivalent tree $\Uu$ on $N'$
   with $\Uu\in N'$ and  $N\pins M^\Uu_\kappa$,
   or vice versa. So the rest is just like in Subcase~\ref{scase:limit_card_and_cutpoint}.\qedhere
   \end{scasetwo}
 \end{casetwo}
\end{proof}

Applying the previous result to $M_1|\delta^{M_1}$, combined with \ref{tm:above_inf_many_meas}, we have:

\begin{cor}\label{cor:confusing_statement}
 Suppose $M_1$ exists and is fully iterable.
 Then $M_1\sats$ ``for every uncountable cardinal $\kappa$,
 the following are equivalent:\begin{enumerate}\item[\tu{(}a\tu{)}] $\kappa$ is not a limit of measurable cardinals,\item[\tu{(}b\tu{)}] there is a $\Sigma_1^{\univ{M}}(\her_\kappa\cup\{\kappa\})$ wellorder of a subset of $\pow(\kappa)$ of cardinality $>\kappa$,
 \item[\tu{(}c\tu{)}] there is a $\Sigma_1^{\univ{M}}(\her_\kappa\cup\{\kappa\})$ wellorder of $\her_{\kappa^+}$,
\item[\tu{(}d\tu{)}] there is a $\Sigma_1^{\univ{M}}(\her_\kappa\cup\{\kappa\})$-good wellorder of $\her_{\kappa^+}$''.
\end{enumerate}
\end{cor}

In \cite{luecke_schindler_schlicht},
it is shown that if $M_1^\#(x)$ exists for all $x\sub\om_1$, then there is no $\Sigma_1(\{\om_1\})$ wellorder of $\pow(\om)$. In every proper class $1$-small mouse $M$, the standard $M$ order of construction, restricted to $\RR^M$,
is $\Sigma_1(\{\om_1\})$. So by the following fact,  the least proper class mouse $M$ where this fails is $L[M_1^\#]$:
\begin{tm}\label{tm:in_M_1^sharp}
 In $L[M_1^\#]$, there is no $\Sigma_1(\{\omega_1\})$ wellorder of $\pow(\om)$,
 and in fact no OD wellorder of $\pow(\om)$.
\end{tm}
\begin{proof}
It is well known that $\om_1^{L[M_1^\#]}$ is measurable in $\HOD^{L[M_1^\#]}$
(see \cite[Remark 3.2]{odle_v2} for a proof). But if $L[M_1^\#]\sats$ ``there is an OD wellorder of $\pow(\om)$'',
then we would have $\RR^{M_1^\#}\sub\HOD^{L[M_1^\#]}$, a contradiction.
\end{proof}

\begin{rem}\label{rem:5.5}
 In \cite{mim}, \cite{extmax}
and \cite{V=HODX_pub},
there are techniques described via which, in various settings, a mouse $M$ can identify its own internal extender sequence $\es^M$ definably over its universe $\univ{M}$. Using some of those results, we get that in short extender mice $M$,
if $\kappa<\OR^M$ is an $M$-cardinal and either $M\sats$ ``$\kappa^+$ exists''
or $M\sats$ ZF$^-$,
the usual $M$-order of constructibility is $\Delta_4^{\her^M_{\kappa^{+M}}}(\{x\})$ where $x=M|\om_1^M$ (that is, $x$ is the initial segment of $M$ through to $\om_1^M$, including its extender sequence there (which consists of partial extenders only)). Moreover,
in many cases, we can make do with $\Delta_2$ instead of $\Delta_4$.
The author does not know whether one can in fact always make do with $\Delta_2$.  If $M$ is tame, the parameter $M|\om_1^M$ can  be replaced with a real (see \cite[Theorem 1.1]{odle_v2}).\end{rem}
\begin{rem}\label{rem:5.6}
V\"a\"an\"anen and
 Welch  argue in \cite{power_set_Sigma_1_Card}
that if $M$ is a proper class 1-small mouse satisfying ``there is no proper class model with a Woodin cardinal'' then in $\univ{M}$, the relation ``$x=\pow(y)$'' is $\Sigma_1(\mathrm{Card})$.
Their argument uses facts about the core model $K$ below 1 Woodin cardinal, and that $M=K^M$.
It uses in particular \cite[Lemma 3.9]{power_set_Sigma_1_Card}
(which as literally written
in \cite{PCF_and_Woodins}
also assumes a large enough measurable cardinal $\Omega$, which might not be available here; in \cite{power_set_Sigma_1_Card}
it is pointed out that this is not needed, given the more recent development of the theory of $K$ at this level without the measurable cardinal in \cite{Kwithoutmeas}).
Using instead an argument like that in the proof of Claim~\ref{clm:N_good_analysis_2} of the proof of Theorem~\ref{tm:good_wo_1-small},
one can prove the V\"a\"an\"anen-Welch result just mentioned without referring to $K$. That is, work in a proper class 1-small mouse $M$ which satisfies ``there is no proper class inner model with a Woodin cardinal'' and let $N$ be a mouse such that $\OR^N=\lambda^{++}$ for some $\lambda$, and $\mathrm{Card}^N=\mathrm{Card}\cap N$. (As discussed there, the iterability of $N$ can be expressed in an appropriate manner.) We want to see that $N\ins M$. So let $(\Tt,\Uu)$ be the comparison of $(M|\lambda^{++},N)$,
with both trees at degree $0$.
We want to see that $\Tt,\Uu$ are both trivial. Suppose not.
Then easily, $\Tt$ cannot be trivial.
Let $\gamma=\lh(E^\Tt_0)$ or $\gamma=\lh(E^\Uu_0)$,
for whichever of these extenders is non-empty,
and let $\eta=\card^M(\gamma)$.
Then $\Uu$ is above $\eta$,  like in the proof of Claim~\ref{clm:N_good_analysis_2}.
But then $\Uu$ drops along $b^\Uu$ (since $\gamma<\eta^{+M}=\eta^{+N}$). So $b^\Tt$ does not drop, and $M^\Tt_\infty\pins M^\Uu_\infty$.
Standard weasel arguments now show that $\lh(\Tt,\Uu)=\lambda^{++}+1$ and $\OR(M^\Tt_{\lambda^{++}})=\lambda^{++}<\OR(M^\Uu_{\lambda^{++}})$,
and for club many  $\beta<\lambda^{++}$, we have  $\beta<^\Uu\lambda^{++}$ and $\beta=\crit(i^\Uu_{\beta\lambda^{++}})$
and $i^\Uu_{\beta\lambda^{++}}(\beta)=\lambda^{++}$.
But then $M^\Tt_{\lambda^{++}}$
has largest cardinal $i^\Tt_{0\lambda^{++}}(\lambda^{+})$, whereas in $M^\Uu_{\lambda^{++}}$,
$\lambda^{++}$ is inaccessible, a contradiction.
\end{rem}

In case $\kappa$ is a limit cardinal of $M$,
the $\Sigma_1^{\univ{M}}(\her_\kappa^M\cup\{\kappa\})$-good wellorder of $(\her_{\kappa^+})^M$
defined in the proof of Theorem~\ref{tm:good_wo_1-small} is not in general the same as the usual order of constructibility. In fact, if $M|\kappa$
is closed under sharps, then they disagree,
because the set $M|\kappa$ itself is positioned strictly above
sets of higher $V$-rank which appear in the dropping iterates of $M|\kappa$ which arise in the proof.
So one can ask whether the usual order of constructibility could be $\Sigma_1^{\univ{M}}(\her^M_\kappa\cup\{\kappa\})$-good. If it is,
then clearly $M|\kappa$ is $\Delta_1^{\univ{M}}(\her_\kappa^M\cup\{\kappa\})$. But we show below that if $M=M_1$
and $\kappa$ is $\om_1$-iterable in $M_1$,
then this is not the case:

\begin{tm}\label{tm:om_1-it}
Let $\kappa\in M_1$ be such that $M_1\sats$ ``$\kappa$ is an uncountable cardinal
which is not a limit of measurables''.
Then:
\begin{enumerate}
 \item\label{item:non-Mahlo} If $M_1\sats$ ``$\kappa$ is not  Mahlo'' then  $M_1|\kappa$
is $\Delta_1(\her_\kappa^{M_1}\cup\{\kappa\})$.
\item\label{item:omega_1-it} If $M_1\sats$ ``$\kappa$ is $\om_1$-iterable'' then $M_1|\kappa$ is not $\Delta_1(\her_\kappa^{M_1}\cup\{\kappa\})$.
\end{enumerate}
\end{tm}

\begin{proof}
Part~\ref{item:non-Mahlo}:  Work in $M_1$. If $\kappa$ is a successor cardinal use the proof of Theorem~\ref{tm:good_wo_1-small}.
Suppose $\kappa$ is a limit cardinal.
Fix an $M_1$-cardinal $\eta<\kappa$
as in the proof of Theorem~\ref{tm:good_wo_1-small}.

Suppose  $\kappa$ is singular.
Then $M_1|\kappa$
is the unique premouse $P$
such that $M_1|\eta\pins P$, $\OR^P=\kappa$,
$P\sats$ ``there are no measurable cardinals $\geq\eta$'',
there is a  sound premouse $Q$
such that $P\pins Q$,
$\rho_\om^Q=\kappa$,
$Q\sats$ ``$\kappa$ is singular'', and
there is $\xi$ such that $L_\xi[Q]\sats$ ``$Q$ is strongly iterable''.
(Compare $M_1$ with $Q$,
and note that because $\kappa$ is singular in $Q$, we can't have that the $M_1$ side drops on its main branch.)

Now suppose $\kappa$ is inaccessible, but
 not Mahlo. So there is a club $C\sub\kappa$
 such that $\beta$ is singular for all $\beta\in C$. So there is a sequence $\vec{f}=\left<f_\beta\right>_{\beta\in C}$ of witnessing singularizing functions.
 We can have our $\Delta_1$ description be much as before, but instead of requiring $Q\sats$ ``$\kappa$ is singular'', we require
 $Q\sats$ ``$\kappa$ is not Mahlo, as witnessed as by some $(C,\vec{f})$''.
 Now argue much like before.

 Part~\ref{item:omega_1-it}: Suppose $\kappa$ is $\om_1$-iterable but $M_1|\kappa$ is $\Delta_1(\her_\kappa^{M_1}\cup\{\kappa\})$.
 So $\{M_1|\kappa\}$
 is $\Sigma_1(\her_\kappa^{M_1}\cup\{\kappa\})$.
 Let $\varphi$ be $\Sigma_1$ and $p\in M_1|\kappa$ be such that
 $M_1|\kappa$ is the unique $X\in M_1$ such that $M_1\sats\varphi(p,\kappa,X)$.
 So there is some $\alpha<\kappa^{+M_1}$
 such that $M_1|\alpha\sats\varphi(p,\kappa,X)$.
 Let $\beta\in(\alpha,\kappa^{+M_1})$
 be such that $M_1|\beta\sats$ ZFC$^-+$``$\kappa$ is the largest cardinal''
 and $U$ be some $\om_1$-iterable weakly amenable ultrafilter over $M_1|\beta$. Let \begin{equation*}\label{eqn:label_for_eqn_29}\pi:(\bar{M},\bar{U})\to(M_1|\beta,U) \end{equation*}
 be elementary with $\bar{\kappa}=\card(\bar{M})<\kappa$
 and $\pi(\bar{\kappa})=\kappa$
 and $p\in M_1|\bar{\kappa}$.
 Then $(\bar{M},\bar{U})\sats\varphi(p,\bar{\kappa},M_1|\bar{\kappa})$,
 $\bar{U}$ is weakly amenable to $\bar{M}$
 and $(\bar{M},\bar{U})$ is $\om_1$-iterable.
 Let $(\bar{M}',\bar{U}')$
 be the $\kappa$th iterate
 and $i:(\bar{M},\bar{U})\to(\bar{M}',\bar{U}')$
 the iteration map.
 Then $i$ is elementary as a map $\bar{M}\to\bar{M}'$ and $i(\bar{\kappa})=\kappa$, so $\bar{M}'\sats\varphi(p,\kappa,\bar{M}'|\kappa)$,
 so in fact $\varphi(p,\kappa,\bar{M}'|\kappa)$
 holds, but $\bar{M}'|\kappa\neq M_1|\kappa$,
 a contradiction.
\end{proof}

\section{Weak Compactness and the Perfect Set Property}\label{sec:wc}

We now answer \cite[Question 10.1]{Sigma_1_def_at_higher_cardinals}; the answer is ``yes'':

\begin{tm}\label{tm:wc}
 Assume ZFC and let $\kappa$ be a weakly compact cardinal such that for every $\Sigma_1(\her_\kappa\cup\{\kappa\})$ set $D\sub\pow(\kappa)$ of cardinality $>\kappa$,
 there is a perfect embedding $\iota:{^\kappa}\kappa\to\pow(\kappa)$ with $\rg(\iota)\sub D$. Then there is a
 proper class inner model satisfying ``there is a weakly compact limit of measurable cardinals''.
\end{tm}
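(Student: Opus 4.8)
The plan is to argue by contradiction. Suppose there is no proper class inner model satisfying ``there is a weakly compact limit of measurable cardinals''. Since below a Woodin cardinal there is always a weakly compact limit of measurable cardinals, this implies in particular that there is no proper class inner model with a Woodin, so the core model $K$ exists, is $1$-small, is fully iterable, satisfies $K^{V[g]}=K^V$ for every set-generic extension $V[g]$, satisfies the weak covering lemma, and moreover $K\sats$``there is no weakly compact limit of measurable cardinals'' (see \cite{cmip}, \cite{maxcore} and subsequent work). As $\kappa$ is weakly compact in $V$, it remains inaccessible, hence weakly compact, in $K$ (downward absoluteness of weak compactness to inner models is standard), so $\kappa$ is \emph{not} a limit of measurable cardinals of $K$. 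Also, by weak covering, $(\kappa^+)^K=\kappa^+$.

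Next I would manufacture a ``thin'' definable subset of $\pow(\kappa)$ of cardinality $\kappa^+$. Since $K$ is a proper class $1$-small mouse and $\kappa$ is an uncountable cardinal of $K$ which is not a limit of measurables of $K$, the construction in the proof of Theorem \ref{tm:good_wo_1-small} (the limit cardinal case being the delicate one) applies inside $K$. Combining it with the internal $\Sigma_1$-definability of $K$ over its own universe — valid given that there is no inner model with a Woodin, via the techniques of \cite{V=HODX} for identifying the extender sequence together with the absoluteness of iterability of $1$-small mice, i.e.\ the ``strongly iterable'' machinery used in the proof of Theorem \ref{tm:good_wo_1-small} and in \cite{cmip}, \cite{schindler_star}, and compare \cite{power_set_Sigma_1_Card} — one obtains a $\Sigma_1(\her_\kappa\cup\{\kappa\})$ (over $V$) set $D\sub\pow(\kappa)$ whose elements are the canonical codes $(T,A)$ of the ``collapsing'' levels $K|\eta$: those $\eta$ with $\kappa<\eta<(\kappa^+)^K$ such that $K|\eta=\Hull^{K|\eta}(\kappa)$ and $\kappa$ is the largest cardinal of $K|\eta$, where $T=\Th^{K|\eta}(\kappa)$ and $A$ is the $<^K$-least wellorder of $\kappa$ of the corresponding ordertype. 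There is exactly one such $\eta$ per ordinal in $(\kappa,(\kappa^+)^K)$, so $\card(D)=(\kappa^+)^K=\kappa^+>\kappa$.

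Finally I would show $D$ has no simply perfect subset; since any perfect embedding $\iota\colon{}^\kappa\kappa\to\pow(\kappa)$ induces, by restriction to ${}^{<\kappa}2$, a simply perfect function with range inside $\rg(\iota)$, this contradicts the hypothesis and finishes the proof. The argument is modelled on that of Theorem \ref{tm:L[U]_gdst} part \ref{item:no_perf_subset}. Suppose $\sigma\colon{}^{<\kappa}2\to{}^{<\kappa}2$ is simply perfect with $\bigcup_{\alpha<\kappa}\sigma(x\rest\alpha)\in D$ for every $x\in{}^\kappa 2$. Force with $\PP=\mathrm{Add}(\kappa,1)$, which is $\kappa$-closed since $\kappa$ is inaccessible, so $\her_\kappa$, $\om_1$ and $K$ are unchanged in $V[g]$; let $x=\bigcup g\in{}^\kappa 2\cap(V[g]\cut V)$ and $(T,A)=\bigcup_{\alpha<\kappa}\sigma(x\rest\alpha)$, which is not in $V$ since $\sigma$ recovers $x$ from it. By genericity (and because \emph{all} $\sigma$-branches lying in $V$ code genuine such levels of $K$), $T$ is a complete, consistent, Skolemized premouse-theory which faithfully represents the ordinals $\leq\kappa$; and since $\cof(\kappa)=\kappa>\om$, the model $M_T$ it determines is wellfounded, because every substructure of $M_T$ of size $<\kappa$ is realized in a genuine level $K|\eta'$ coded by a $\sigma$-branch in $V$. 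Hence $M_T$ is a genuine wellfounded iterable $1$-small premouse $K'|\eta$ collapsing $\kappa$ with $K'|\eta=\Hull^{K'|\eta}(\kappa)$, so $(T,A)\in D^{V[g]}$. But rigidity of the core model pins this down to $V$: comparing $K'|\eta$ with $K^V=K^{V[g]}$ in $V[g]$ (legitimate, as $K'|\eta$ is wellfounded and iterable and $\om_1^{V[g]}=\om_1^V$), the analysis in the proof of Theorem \ref{tm:good_wo_1-small} shows $K'|\eta$ is either an initial segment of $K^V$ or a level of a ``universal''-type iterate of a set-sized initial segment of $K^V$ by a tree lying in $K^V$; in either case $K'|\eta\in V$, so $(T,A)\in V$, a contradiction.

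I expect the main obstacle to be this last step: replacing the appeal to ``uniqueness of $L[U]$ and its iterates'' in the proof of Theorem \ref{tm:L[U]_gdst} part \ref{item:no_perf_subset} by the corresponding rigidity statement for the general $1$-small core model $K$ — i.e.\ checking that any wellfounded iterable $1$-small premouse which collapses $\kappa$, is a hull of $\kappa$, and fits the comparison pattern of Theorem \ref{tm:good_wo_1-small} must already lie in $V$ — together with the bookkeeping needed to verify that the relevant initial segments of $K$ (and the induced wellorder) really are $\Sigma_1(\her_\kappa\cup\{\kappa\})$ over $V$, and that weak covering applies to give $(\kappa^+)^K=\kappa^+$ here.
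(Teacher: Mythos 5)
Your overall architecture is the same as the paper's: pass to the core model $K$ (the paper uses $K$ below $0^\pistol$, which suffices since a strong cardinal is a measurable limit of measurables; you use $K$ below a Woodin), show $\kappa$ is weakly compact in $K$, deduce that $\kappa$ is not a limit of measurables of $K$, prove $\kappa^{+K}=\kappa^+$, extract a $\Sigma_1(\her_\kappa\cup\{\kappa\})$ set $D$ of theories of collapsing levels above $K|\kappa$ of cardinality $\kappa^+$, and refute a perfect subset by forcing a new branch and using core-model rigidity, exactly on the model of Theorem \ref{tm:L[U]_gdst}\ref{item:no_perf_subset}. However, the two steps you dispatch with one-line citations are precisely the two substantive claims of the paper's proof, and the justifications you give for them are false as stated. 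First, ``$\kappa$ remains inaccessible, hence weakly compact, in $K$'' is a non sequitur, and weak compactness is \emph{not} downward absolute to arbitrary inner models: given a $\kappa$-Aronszajn tree of $K$, the branch supplied by weak compactness in $V$ need not lie in $K$. The paper has to argue this (its Claim \ref{clm:kappa_is_wc}) using a weakly compact embedding $j:M\to N$ with ${^{<\kappa}}N\sub N$ together with the fact that $K$ is below a measurable limit of measurables, so that the branch, being a subset of $\kappa$ in a sound collapsing level of the iterable premouse $j(K|\kappa)$, condenses into $K$. Second, ``$(\kappa^+)^K=\kappa^+$ by weak covering'' does not work: at regular $\kappa$ weak covering only yields $\cof(\kappa^{+K})\geq\kappa$, which is compatible with $\kappa^{+K}<\kappa^+$. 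The paper's argument for this is again via a weak compactness embedding: if $\kappa^{+K}<\kappa^+$ one derives a normal $K$-measure $U$ on $\kappa$ with $\Ult(K,U)$ wellfounded, concludes from \cite[\S8]{cmip} that $U\in K$ and then that $U\in j(K)$, making $\kappa$ a limit of measurables of $K$ --- contradicting the case hypothesis. Both facts are true here, but neither is ``standard'' in the form you invoke; without the embedding arguments your proof has two genuine holes.

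A smaller but real issue is the definability of $D$. A $\Sigma_1(\her_\kappa\cup\{\kappa\})$ formula cannot certify ``$N$ is a level of $K$''; via the strong-iterability trick it can only certify ``$N$ is a (strongly) iterable sound premouse extending $K|\eta_0$ with no measurables $\geq\eta_0$ and collapsing $\kappa$'', and such $N$ include levels of iterates of $K|\kappa$ obtained by hitting a measure of $K$ below $\eta_0$ out to $\kappa$. So your $D$, as defined by a $\Sigma_1$ formula, is strictly larger than the set of codes of levels of $K$. This must be confronted explicitly: the paper does so by proving that at most $\kappa$-many elements of $D$ arise from $M\neq K|\kappa$ (a comparison argument) and then diagonalizing so that the perfect function lands on pairs $(K|\kappa,T)$ before running the genericity argument. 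Your closing remark that the comparison in $V[g]$ puts the generic $K'|\eta$ into $V$ in either case can be made to work as an alternative to the diagonalization, but as written you have defined $D$ to be one set and proved $\Sigma_1$-definability (implicitly) of another; this needs to be reconciled, e.g.\ by adopting the wider definition of $D$ from the start and checking that the extra elements neither destroy $\card(D)>\kappa$ (they do not) nor the final contradiction.
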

\begin{proof}
We may assume that there is no proper class inner model satisfying ``there is a strong cardinal'', so the core model $K$ below $0^\pistol$ exists. We may further assume that $K\sats$ ``there is no weakly compact limit of measurable cardinals'', and in particular ``no measurable limit of measurable cardinals''.

\begin{clmthree}\label{clm:kappa_is_wc}
$\kappa$ is  weakly compact in $K$.\end{clmthree}
\begin{proof}
Let $T\in K$ be a tree on $\kappa$ of the relevant form. Using the weak compactness of $\kappa$ in $V$,
let $j:M\to N$ be elementary,
where $M,N$ are transitive, $V_\kappa\cup\{\kappa,T\}\in M$, for some $\alpha<\kappa^{+K}$,
we have $T\in K|\alpha\in M$,
$M\sats\ZFC^-$+``$K|\alpha$ is iterable'' and
${^{<\kappa}}M\sub M$,
with $\crit(j)=\kappa$. We may assume that $N=\Ult_0(M,U)$
where $U$ is the normal measure derived from $j$,
and therefore ${^{<\kappa}}N\sub N$. Let $b\in j(T)$ be a branch at level $\kappa$ of $j(T)$. Then $b\sub T$, so it suffices to see that $b\in K$. But $b\sub\kappa$
and $b\in j(K|\kappa)$,
so $b\in j(K|\kappa)|\beta$
for some $\beta<\kappa^{+j(K|\kappa)}$
such that $j(K|\kappa)|\beta$ projects to $\kappa$. And $N\sats$ ``$j(K|\kappa)$ is iterable'',
so $j(K|\kappa)$ really is iterable, since ${^{<\kappa}}N\sub N$. But then considering that $K$ is below a measurable limit of measurables, it follows that $j(K|\kappa)|\beta\pins K$, which suffices.
\end{proof}

By the claim, it suffices to see that $\kappa$ is limit of measurables in $K$. So  suppose
we can fix $\eta<\kappa$
such that all measurables of $K$ below $\kappa$
are $<\eta$, and $\eta=\theta^{++K}$ for some $\theta$. Since $K$ is below a measurable limit of measurables, $\eta$ is a cutpoint of $K$.

 \begin{clmthree}\label{clm:kappa^+K=kappa^+_wc}$\kappa^{+K}=\kappa^+$.
 \end{clmthree}
\begin{proof}
Suppose not.
Then by the weak compactness of $\kappa$,
we can fix some transitive set $M\sats\ZFC_n$ (for some large $n$),
of cardinality $\kappa$,
with $\pow(\kappa)\cap K\sub M$
and ${^\om}M\sub M$,
another transitive $N$,
and a $\Sigma_n$-elementary $j:M\to N$
with $\crit(j)=\kappa$,
and such that if $\kappa$ is measurable in $K$,
then the (unique) normal
measure $D$ on $\kappa$
in $K$ is also in $M$,
and in fact $D$ is coded by a subset of $\kappa$ in $M$,
and hence $D\in N$ also in this case;
we may also assume that $N=\Ult_0(M,U)$
where $U$ is the normal $M$-ultrafilter over $\kappa$ derived from $j$, so then ${^\om}N\sub N$.

Let $U$ be the normal measure over $K$ derived from $j$.
Let $K'=\Ult(K,U\cap K)$.
Then since ${^\om}M\sub M$,
$K'$ is wellfounded.
(If  $\left<f_n\right>_{n<\om}\sub K$
with $f_n:\kappa\to\OR$
and $A_n=\{\alpha<\kappa\bigm|f_{n+1}(\alpha)<f_n(\alpha)\}\in U$
for all $n<\om$,
then since $\vec{A}=\left<A_n\right>_{n<\om}\in M$, we get $j(\vec{A})\in N$,
but $\mu\in j(A_n)$ for each $n$,
so $\bigcap j(\vec{A})\neq\emptyset$,
so $\bigcap \vec{A}\neq\emptyset$,
which gives a contradiction.)

Since $K'$ is proper class and by our smallness assumptions, it follows that $K'$ is fully iterable. And like in \cite[\S3]{cmip}, $K'$ is universal.
Let $\ell:K\to K'$ be the ultrapower map.
By \cite[Theorem 8.13]{cmip},
$K'$ is a normal iterate of $K$
and $\ell$ is the iteration map.
Note then that since $\kappa$ is not a limit of measurables in $K$, $U\cap K=D\in K$ is the unique normal measure on $\kappa$ in $K$, and $\ell=i^K_D$ is the ultrapower map. We have $D\in M\cap N$.
But $\pow(\kappa)\cap K=\pow(\kappa)\cap K^N$, like in the proof of Claim~\ref{clm:kappa_is_wc}.
Note that $K''=\Ult(K^N,D)$ is wellfounded,
by the same reasoning for the wellfoundedness of $K'$. Because $n$ is large enough, \cite[Theorem 8.13]{cmip} applies in $N$ to the ultrapower map $K^N\to K''$,
and so it follows that $D\in K^N$,
so $\kappa$ is measurable in $K^N$,
so $\kappa$ is a limit of measurables of $K$, contradiction.
\end{proof}

Recall  $\eta=\theta^{++K}$ bounds the measurables of $K$ below $\kappa$.
Now let $X$ be the set of subsets of $\kappa$ coding a pair $(M,T)$,
where $M$ is an iterable premouse of height $\kappa$ such that $K|\eta\pins M$,
$M$ has no measurable cardinals $\geq\eta$, there is a sound premouse $P$
such that $M\pins P$, $\rho_\om^P=\kappa$, $P$ satisfies the condensation theorem (see \cite{outline}) and $T$ is the theory of $P$ in parameters in $\kappa$.
Note that $X$ is $\Sigma_1(\{K|\eta,\kappa\})$,
and $\card(X)>\kappa$,
since $\kappa^{+K}=\kappa^+$
and for cofinally many proper segments $P\pins K|\kappa^{+K}$,
we have $(K|\kappa,T_P)\in X$
where $T_P=\Th^P(\kappa)$.
So by the theorem's hypothesis, we can fix a perfect function $\iota:{^\kappa}\kappa\to\pow(\kappa)$
with $\rg(\iota)\sub X$.

\begin{clmthree}
There are at most $\kappa$-many elements of $X$
of form $(M,T)$ for some $M\neq K|\kappa$.\end{clmthree}
\begin{proof}This is much like in the proof of Theorem~\ref{tm:good_wo_1-small}, in its Case~\ref{case:kappa_limit_card}. But here is the outline. Fix such a pair $(M,T)$.
The comparison of $K|\kappa$ versus $M$, producing iterations $\Tt$ on $K|\kappa$ and $\Uu$ on $M$,
is such that $b^\Uu$ does not drop,
but $b^\Tt$ drops and $M^\Uu_\infty\pins M^\Tt_\infty$.
(Otherwise we would have that neither $b^\Tt$ nor $b^\Uu$ drops,
and $M^\Tt_\infty=M^\Uu_\infty$,
but then both sides are trivial, because $K$ has no measurables $\geq\eta$, and so $M=K|\kappa$,
contradiction.)
So in fact, $\Uu$ is trivial, since $M$ has no measurables $\geq\eta$, and $K|\eta=M|\eta$.
It follows that after some stage $\alpha<\kappa$,
$\Tt$ simply consists of linearly iterating some fixed measure out to $\kappa$, producing $M=M^\Tt_\kappa|\kappa$. So there are at most $\kappa$-many such $M$.
 \end{proof}

 So by diagonalizing if necessary, we may assume that all elements of $\rg(\iota)$ are of form $(K|\kappa,T)$. Note that also, letting
 $(M,T)\in X$ with $M=K|\kappa$
 and $P$ the model with theory $T$,
 we have $P\pins K$, by Jensen's condensation argument \cite[Fact 3.1]{V=HODX_pub}. We now get a contradiction like in the proof of Theorem~\ref{tm:L[U]_gdst} part~\ref{item:no_perf_subset},
 in its Case~\ref{case:mu^+<kappa_and_kappa_mu-steady},
 in the subcase there that $\theta>\om$.
 That is, let $G$ be $(V,\PP)$-generic where $\PP={^{<\kappa}}\kappa$.
 Let $(M_G,T_G)$ be the pair coded by $\bigcup \iota``G$. Then $M_G=K|\kappa$,
 since we thinned out $\rg(\iota)^V$ in this way. And like in the proof of Theorem~\ref{tm:L[U]_gdst}, $T_G$ is the theory of a premouse $P\notin V$, which satisfies condensation and has $\rho_\om^P=\kappa$. But $\kappa$ is regular in $V[G]$
 and $\kappa^{+V[G]}=\kappa^+$,
 so by Jensen's condensation argument,
 $P\pins  K$, so $P\in V$,
 contradiction.
\end{proof}

\section{Almost Disjoint Families}
\label{sec:adf}

We summarized our results on almost disjoint families in \S\ref{sec:intro}.

\begin{tm}\label{tm:kappa_limit_of_measurables_implies_no_Sigma_1_mad_family}
 Assume ZFC and let $\kappa$ be an uncountable cardinal.
 Then:
 \begin{enumerate}
  \item\label{item:cof_om_meas_lim_adf} If $\cof(\kappa)=\om$  then there is a $\Delta_1(\{\kappa\})$ almost disjoint family
  $\mathscr{F}\sub\pow(\kappa)$
 of cardinality $>\kappa$,
 such that for all $A\in\mathscr{F}$,
 $A$ is unbounded in $\kappa$
 and has ordertype $\om$.
 \item\label{item:kappa_a_limit_of_measurables} Suppose $\kappa$ is a limit of measurable cardinals.
 Then:
 \begin{enumerate}
  \itemref[item:no_madf_limit_of_meas]{\tu{(}a\tu{)}} There is no $\Sigma_1(V_\kappa\cup\OR)$ mad family $\mathscr{F}\sub\pow(\kappa)$ of cardinality $\geq\kappa$.
 \itemref[item:cof>om_meas_lim_no_adf]{\tu{(}b\tu{)}} If $\cof(\kappa)>\om$
 then there is no $\Sigma_1(V_\kappa\cup\OR)$
 almost disjoint family $\mathscr{F}\sub\pow(\kappa)$ of cardinality $>\kappa$.
 \end{enumerate}
 \item\label{item:no_madf_above_1_meas}Let $\mu<\kappa$
 and suppose that $\mu$ is measurable and $\kappa$ is $\mu$-steady.
 Let $U$ be a normal measure on $\mu$.
 Let $S=\{\alpha\in\OR\bigm|i^{\Tt_U}_{0\lambda}(\alpha)=\alpha\text{ for all }\lambda<\kappa\}$
 \tu{(}so $\kappa,\kappa^+\in S$\tu{)}.
 Then there is no $\Sigma_1(V_\mu\cup S)$ mad family $\mathscr{F}$  at $\kappa$
  of cardinality $>2^\mu$.
  \end{enumerate}
\end{tm}
\begin{proof}
 Part~\ref{item:cof_om_meas_lim_adf}
 is routine: just let $A\in\mathscr{F}$ iff $A\sub\kappa$,
 $A$ has ordertype $\omega$, $A$ is unbounded in $\kappa$ and for every $\alpha\in A$, $\alpha$ is the G\"odel code of a pair $(\beta,\gamma)$ such that $\beta$ is the G\"odel code of $A\cap\alpha$.
 This easily works.

 Part~\ref{item:kappa_a_limit_of_measurables}\ref{item:cof>om_meas_lim_no_adf}: Suppose otherwise and fix $\vec{p}\in V_\kappa\cup\OR$ and some
 $\Sigma_1(\{\vec{p}\})$ almost disjoint family $\mathscr{F}\sub\pow(\kappa)$ of cardinality $>\kappa$.
 Let $\eta=\cof(\kappa)$
 and let $\left<\kappa_\alpha\right>_{\alpha<\eta}$ be a strictly increasing sequence of measurable cardinals cofinal in $\kappa$,
 with $\eta<\kappa_0$ if $\eta<\kappa$,
 and $\vec{p}$ fixed by all iterations of measures which are based on $V_\delta$ for some $\delta<\kappa$, are above $\kappa_0$, and have length $<\kappa$. (Such a $\kappa_0$ exists by Lemma~\ref{lem:ordinals_ev_stable_meas_limit}.)
 Let $\left<U_\alpha\right>_{\alpha<\eta}$ a sequence with $U_\alpha$ a normal measure on $\kappa_\alpha$.
 Since $\mathscr{F}$ has cardinality $>\kappa$, we can fix $X\in\mathscr{F}$ such that
 $i^{\Tt_{U_\alpha}}_{0\kappa}(X\cap\kappa_\alpha)\neq X$  for all $\alpha<\eta$. By thinning out the sequence, we may assume
 that
 $i^{\Tt_{U_\alpha}}_{0\kappa_{\alpha+1}}(X\cap \kappa_\alpha)\neq X\cap\kappa_{\alpha+1}$ for every $\alpha<\eta$. Let $\lambda=\sup_{n<\om}\kappa_n$; so $\lambda<\kappa$. For $\xi\in[\lambda,\kappa)$  let $n_\xi$ be the least $n<\om$
 such that $i^{\Tt_{U_n}}_{0\kappa_{n+1}}(\xi)=\xi$;
 again by Lemma~\ref{lem:ordinals_ev_stable_meas_limit},
 such an $n$ exists. Then since $\cof(\kappa)>\om$
 and $X$ is unbounded in $\kappa$,
 there is $n<\om$ and an unbounded set $Y\sub X$
 such that $n_\xi=n$ for all $\xi\in Y$.
 Fix such $n,Y$.
 Let $j=i^{\Tt_{U_n}}_{0\kappa_{n+1}}$.
 Then $j(X)\in\mathscr{F}$ since $j$ fixes $\vec{p}$. And
 $j(X)\neq X$ by construction.
 But $Y\sub X\cap j(X)$,
 and since $Y$ is unbounded in $\kappa$, this is a contradiction.

 Part~\ref{item:no_madf_above_1_meas}:
 Suppose otherwise.
 Fix $\vec{p}\in V_\mu\cup S$
 and a $\Sigma_1(\{\vec{p}\})$
 mad family $\mathscr{F}$ at $\kappa$ of cardinality $>2^\mu$.

We have $i^{\Tt_U}_{0\lambda}(\mathscr{F})=\mathscr{F}\cap M^{\Tt_U}_{\lambda}$ for all $\lambda<\kappa$. For $i^{\Tt_U}_{0\lambda}(\mathscr{F})\sub\mathscr{F}$
since
 $i^{\Tt_U}_{0\lambda}(\vec{p})=\vec{p}$
 and by $\Sigma_1$ upward absoluteness, and conversely,
 if $X\in \mathscr{F}\cap M^{\Tt_U}_{\lambda}$,
 then by the maximality of $\mathscr{F}$ and elementarity,
 there is $Y\in i^{\Tt_U}_{0\lambda}(\mathscr{F})$
 such that $Y\cap X$ is unbounded in $\kappa$,
 but as just mentioned, it follows that $Y\in\mathscr{F}$
 also,
 and therefore $Y=X$.

For each $\xi<\kappa$ let $\mu_\xi=i^{\Tt_U}_{0\xi}(\mu)$.
 For $\alpha<\kappa$ let $\gamma_\alpha$ be the least $\gamma$ such that $\mu_\gamma>\alpha$, and choose some $n_\alpha<\om$ and $f_\alpha:[\mu]^{n_\alpha}\to\mu$
 such that for some $c=\{c_0<\ldots<c_{n_\alpha-1}\}\in[\gamma_\alpha]^{<\om}$,
 we have $\alpha=i^{\Tt_U}_{0\gamma_\alpha}(f_\alpha)(b_{\alpha})$,
 where $b_{\alpha}=\{\mu_{c_0},\ldots,\mu_{c_{n_\alpha-1}}\}$.
 Note then that $\alpha=i^{\Tt_U}_{0\gamma}(f_\alpha)(b_\alpha)$
 for all $\gamma\in[\gamma_\alpha,\kappa]$.

 Let us say that $\mu'<\kappa$
is \emph{nicely stable}
iff it satisfies the properties of Lemma~\ref{lem:many_U-it_fixed_points_above_meas} part~\ref{item:nicely_stable};
that is, $i^{\Tt_U}_{0\mu'}(\mu)=\mu'$ and $i^{\Tt_U}_{0\lambda}(\mu')=\mu'$ for all $\lambda<\mu'$. So the nicely stable ordinals are unbounded in $\kappa$.

 Now fix
  $X\in\mathscr{F}$  such that $X\neq i^{\Tt_U}_{0\kappa}(X\cap\mu)$; such an $X$ exists because $\mathscr{F}$ has cardinality $>2^\mu$.
 Let $\eta=\cof(\kappa)$
 and fix a normal function $g:\eta\to\kappa$ with $\mu\leq g(0)$
 (but $\kappa$ might be regular).
 Define a sequence $\left<\alpha_\beta,A_\beta\right>_{\beta<\eta}$ as follows:
Let $\alpha_0$
be the least $\alpha\in X$
such that $\alpha>g(0)$.
Let $A_0$ be the set of the least $(n_{\alpha_0}+1)$-many nicely stable ordinals which are $>\gamma_{\alpha_0}$. Given $\left<\alpha_\beta,A_\beta\right>_{\beta<\lambda}$ where $\lambda<\eta$,
let $\alpha_\lambda$ be the least $\alpha\in X$ such that $\alpha>g(\lambda)$ and $\alpha>\bigcup_{\beta<\lambda}A_\beta$, and let $A_\lambda$
be the set of the least $(n_{\alpha_\lambda}+1)$-many nicely stable ordinals which are $>\gamma_{\alpha_\lambda}$.

Now define, for each $\beta<\eta$, \begin{equation*}\label{eqn:label_for_eqn_30}\alpha'_\beta=i^{\Tt_U}_{0\max(A_\beta)}(f_{\alpha_\beta})(A_\beta\cut\{\max(A_\beta)\}),\end{equation*}
so also
\begin{equation*}\label{eqn:label_for_eqn_31} \alpha'_\beta=i^{\Tt_U}_{0\kappa}(f_{\alpha_\beta})(A_\beta\cut\{\max(A_\beta)\}).\end{equation*}
 Then $\alpha_\beta\leq\alpha'_\beta<\max(A_\beta)$ for all $\beta<\eta$. Let  $Y=\{\alpha'_\beta\bigm|\beta<\eta\}$. So $Y$ is cofinal in $\kappa$, so  by the maximality of $\mathscr{F}$, we can fix  $Z\in\mathscr{F}$ with $Y\cap Z$ cofinal in $\kappa$.

We claim that $Z\neq i^{\Tt_U}_{0\kappa}(Z\cap\mu)$. For suppose otherwise. Since $X\neq i^{\Tt_U}_{0\kappa}(X\cap\mu)$ but $X,Z\in\mathscr{F}$,
$X\cap Z$ is bounded in $\kappa$.
So we can fix $\beta$
such that $\alpha_\beta\notin Z$
but $\alpha_\beta'\in Z$.
(That is, for all sufficiently large $\beta$, we have $\alpha_\beta\notin Z$, since $\alpha_\beta\in X$. But $Y\cap Z$ is cofinal in $\kappa$, so there is such a $\beta$
with $\alpha_\beta'\in Z$.)
But because $Z=i^{\Tt_U}_{0\kappa}(Z\cap\mu)$
and $\alpha_\beta=i^{\Tt_U}_{0\kappa}(f_{\alpha_\beta})(b_{\alpha_\beta})$
and $\alpha_\beta'=i^{\Tt_U}_{0\kappa}(f_{\alpha_\beta})(A_\beta\cut\{\max(A_\beta)\})$,
in fact $\alpha_\beta\in Z$ iff $\alpha_\beta'\in Z$,
a contradiction.

So there is $\lambda<\kappa$
such that $W=i^{\Tt_U}_{0\lambda}(Z)\neq Z$, but  $W\in\mathscr{F}$. So $W\cap Z$ is bounded in $\kappa$.
So we can find $\beta<\eta$ such that $\alpha_\beta'\in Z\cut W$
and  $\lambda<\min(A_\beta)$.
But every ordinal in $A_\beta$ is nicely stable, so $i^{\Tt_U}_{0\lambda}(A_\beta)=A_\beta$,
from which it easily follows
that $i^{\Tt_U}_{0\lambda}(\alpha_\beta')=\alpha_\beta'$,
so $\alpha_\beta'\in W$, a contradiction.

Part~\ref{item:kappa_a_limit_of_measurables}\ref{item:no_madf_limit_of_meas}:
This is an easy corollary of part~\ref{item:no_madf_above_1_meas}
and Lemma~\ref{lem:ordinals_ev_stable_meas_limit}.
\end{proof}

Let $M$ be a premouse.
If $M\sats$ ``$\om_1$ exists''
then $\HC^M$
is the universe of $M|\om_1^M$,
but $M|\om_1^M$ itself (which has $\es^M\rest\om_1^M$ as a predicate) need not be definable from parameters over $\HC^M$.
In the following theorem,
we might have $\kappa=\om_1^M$,
in which case $\mathfrak{m}=M|\om_1^M\in\her^M_{\om_2^M}\cut\her^M_{\om_1^M}$.
But if $\om_1^M<\kappa$
then $\mathfrak{m}\in\her_\kappa^M$.

Recall that a premouse $M$ is \emph{non-tame}
if there is $E\in\es^M_+$
and some $\delta\in[\crit(E),\nu(E)]$ such that $M|\lh(E)\sats$ ``$\delta$ is Woodin'';
otherwise $M$ is \emph{tame}.
Tame mice can satisfy ``there is a strong cardinal which is a limit of Woodin cardinals'' and more. They cannot satisfy ``there is a Woodin limit of Woodin cardinals''.

\begin{tm}\label{tm:regular_kappa_Pi_1_kappa_mad_families}
Let $M$ be a $(0,\om_1+1)$-iterable premouse, and let $\kappa\in\OR^M$
be such that $M\sats$ ``$\kappa>\om$ is a regular cardinal and $\kappa^+$ exists''.
Let $\mathfrak{m}=M|\om_1^M$.
Then:
\begin{enumerate}
 \item \label{item:mouse_Pi_1_mad_family}
$M\sats$ ``there is a mad family $\mathscr{F}\sub\pow(\kappa)$ of cardinality $\kappa^+$ which is $\Pi_1(\{\kappa,\mathfrak{m}\})$''.
\item\label{item:tame_Pi_1_mad_family} If $M$ is tame then $M\sats$ ``there is a mad family $\mathscr{F}\sub\pow(\kappa)$
of cardinality $\kappa^+$ which is $\Pi_1(\{\kappa,x\})$
for some $x\in\RR$''.
\item\label{item:M_n,wlw} If $M=M_n$ for some $n<\om$ \tu{(}the minimal proper class mouse with $n$ Woodin cardinals\tu{)}
or $M$ is the minimal proper class mouse satisfying ``there is a Woodin limit of Woodin cardinals'' then $M\sats$ ``there is a mad family $\mathscr{F}\sub\pow(\kappa)$ of cardinality $\kappa^+$ which is $\Pi_1(\{\kappa\})$''.
\end{enumerate}
\end{tm}
\begin{rem}\label{rem:7.3}
There are many other examples
of ``minimal''
mice $M$ for which the same conclusion as in part~\ref{item:M_n,wlw} holds.\end{rem}
\begin{proof}
Part~\ref{item:mouse_Pi_1_mad_family}:
 By \cite[Theorem 3.11]{V=HODX_pub},
 $M|\kappa$
 is definable over $\her_\kappa^M$
 from the parameter $\mathfrak{m}$ (or predicate $\mathfrak{m}$, in case $\kappa=\om_1^M$,
 but in this case it is trivial).
 And working in $M$,
 where $\kappa$ is regular,
 $M|\kappa^+$ is simply definable from the parameter $M|\kappa$,
 as the Jensen stack over $M|\kappa$. (See \cite[Fact 3.1]{V=HODX_pub} and/or Subcase \ref{scase:cutpoint_eta_above_lgcd} of the proof of Theorem \ref{tm:good_wo_1-small}.)

 Now work in $M$.
 We will define a mad family $\mathscr{F}\sub\pow(\kappa)$ as desired. We will use the standard $M$-constructibility order
 to guide a recursive construction to build $\mathscr{F}$. Let us sketch how we will ensure that this will be $\Pi_1(\{\kappa,\mathfrak{m}\})$-definable.
 Given a set $X$ of ordinals,
 and an ordinal $\alpha$,
 let $\alpha\oplus X=\{\alpha+\beta\bigm|\beta\in X\}$.  For sets $X,A\sub\OR$
 and $\alpha\in\OR$, say that $X$ is \emph{directly encoded into $A$ at $\alpha$}
 if there is $\beta\in\OR$
  such that $\alpha+X=[\alpha,\beta)\cap A$; say that $X$ is \emph{directly encoded into $A$}
  if there is some such $\alpha$. The key to the construction will be to ensure that for each $A\in\mathscr{F}$, every bounded set $X\sub\kappa$ is directly encoded into $A$.
 Then from any $A\in\mathscr{F}$,
 we can easily recover $\her_\kappa^M$, and so by the previous paragraph, recover $M|\kappa$ and (proper segments of) $M|\kappa^{+}$, and hence the recursive construction used to build $\mathscr{F}$.

 We now proceed to the detailed construction of $\mathscr{F}$.
 The same construction will also be used in parts\nolinebreak[3] \ref{item:tame_Pi_1_mad_family}~and~\ref{item:M_n,wlw}.
 Fix a surjection
 $\pi:\kappa\to\pow({<\kappa})$
 which is definable over $\her_\kappa^M$ from  the parameter $\mathfrak{m}$
 and such that $\pi(\alpha)\sub\alpha$ for each $\alpha<\kappa$.
 We will define a sequence $\left<A_\alpha\right>_{\alpha<\kappa^{+}}$
 by recursion on $\alpha$,
 and set $\mathscr{F}=\{A_\alpha\}_{\alpha<\kappa^+}$.  We will also define a sequence $\left<C_\alpha\right>_{\alpha<\kappa^+}$ of clubs $C_\alpha\sub\kappa$.

 Let us start with $A_0$ and $C_0$.
 We define $A_0$ through a $\kappa$-sequence of stages $\alpha<\kappa$, at stage $\alpha$ extending with a certain empty interval,  followed by a direct encoding of $\pi(\alpha)$.
 Recall that for a set $X$ of ordinals, the \emph{strict supremum} $\strsup(X)$ of $X$ is the least ordinal $\eta$ such that $X\sub\eta$.
 Define a sequence $\left<B_\alpha\right>_{\alpha<\kappa}$ of sets as follows.
 Set $B_0=\emptyset$.
 Given $B_\alpha$,
 let $B_{\alpha+1}=B_\alpha\cup((\beta+\beta)\oplus\pi(\alpha))$ where $\beta=\strsup(B_\alpha)$.
 (We only really need ``$\beta+\beta$'' instead of just ``$\beta$'' in case $\alpha$ is a limit ordinal with $\alpha=\strsup(B_\alpha)=\sup(B_\alpha)$.) Given $B_\alpha$ for $\alpha<\eta$, where $\eta$ is a limit, let $B_\eta=\bigcup_{\alpha<\eta}B_\alpha$.  Now set $A_0=\bigcup_{\alpha<\kappa}B_\alpha$.
 Also let $C_0\sub\kappa$
 be the club of all limit ordinals $\eta<\kappa$
 such that $\eta=\sup B_\eta$.

 Suppose we have defined $\left<A_\alpha,C_\alpha\right>_{\alpha<\lambda}$ where $0<\lambda<\kappa^{+M}$,
and the following conditions hold,
for all $\alpha<\lambda$:
 \begin{enumerate}
  \itemref[item:A_alpha_unbd]{(i)} $A_\alpha$ is unbounded in $\kappa$,
  \itemref[item:all_bounded_sets_coded]{(ii)} every bounded subset of $\kappa$ is directly encoded into $A_\alpha$,
  \itemref[item:intersections_bounded]{(iii)} $A_\beta\cap A_\alpha$ is bounded in $\kappa$ for all  $\beta<\alpha$,
  \itemref[item:club_C_alpha]{(iv)}  $C_\alpha\sub\kappa$ is club and
  $[\eta,\eta+\eta)\cap A_\alpha=\emptyset$
   for all $\eta\in C_\alpha$.
 \end{enumerate}

 Now let $B\sub\kappa$
 be $<^M$-least
 such that $B$ is unbounded in $\kappa$ but there is no $\alpha<\lambda$ such that $B\cap A_\alpha$ is unbounded in $\kappa$. (Such a set $B$ exists. For $\kappa\cut A_\alpha$ is unbounded in $\kappa$ for each $\alpha<\kappa$
 (say by clause~\ref{item:all_bounded_sets_coded}), so  since $\kappa$ is regular,
 a routine diagonalization produces such a $B$.)

 We will define $A_\lambda$ and $C_\lambda$,
 ensuring that $B\cap A_\lambda$ is unbounded in $\kappa$,
 and maintaining the conditions~\ref{item:A_alpha_unbd}--\ref{item:club_C_alpha},
 and ensure also a bit more.

 Fix the $<^M$-least surjection $\sigma:\kappa\to\lambda$.
 We construct $A_\lambda$
 in $\kappa$-many stages $\alpha<\kappa$. At successor stages $\alpha+1$, we find a suitable point at which to directly encode $\pi(\alpha)$, followed by adding some element of $B$, to what we have produced so far. At limit stages, we insert some space, followed by some element of $B$.

 So,
 we define an increasing sequence of sets $\left<a_\alpha\right>_{\alpha<\kappa}$;
 each $a_\alpha$ will be an initial segment of $A_\lambda$. Let $a_0=\emptyset$.
 Suppose we have defined $a_\alpha$
 where $\alpha<\kappa$.
 Let $C'_{\lambda\alpha}=\bigcap_{\beta<\alpha}C_{\sigma(\beta)}$;
 note that $C'_{\lambda\alpha}\sub\kappa$ is club. Let $\eta\in C'_{\lambda\alpha}$ be least such that $\eta\geq\max(\alpha,\strsup(a_\alpha))$. Then let $a'_{\alpha+1}=a_\alpha\cup(\eta\oplus\pi(\alpha))$.
 Let $\beta$
 be the least element of $B$ such that $\beta>\sup(a'_{\alpha+1})$,
 and let $a_{\alpha+1}=a'_{\alpha+1}\cup\{\beta\}$.

 Now suppose we have defined $a_\alpha$ for all $\alpha<\zeta$,
 where $\zeta<\kappa$ is a limit.
 Let $a'_\zeta=\bigcup_{\alpha<\zeta}a_\alpha$. Then set $a_\zeta=a'_\zeta\cup\{\beta\}$,
 where $\beta$ is least such that $\beta\in B$ and $\beta\geq\sup (a'_\zeta)+\zeta$.
 (Note the ``$+\zeta$''
 here introduces ``extra space'';
 this is needed to see that we get the club $C_\lambda$ with the desired properties.)
 Now set $A_\lambda=\bigcup_{\alpha<\kappa}a_\alpha$.
 Also set $C_\lambda$
 to be the set of all limit ordinals $\zeta$ such that $\zeta=\sup(a'_\zeta)$.

Clearly $A_\lambda\cap B$ is unbounded in $\kappa$, and properties
 \ref{item:A_alpha_unbd}, \ref{item:all_bounded_sets_coded}
 and \ref{item:club_C_alpha}
hold for all $\alpha\leq\lambda$.
For \ref{item:intersections_bounded},
fix $\beta<\lambda$; we verify that
$A_\beta\cap A_\lambda$ is bounded in $\kappa$. Well,
$A_\beta\cap B$ is bounded by choice of $B$. All elements of $A_\lambda\cut B$ are in $a'_{\alpha+1}\cut a_\alpha$ for some $\alpha<\kappa$. But for all sufficiently large $\alpha<\kappa$,
we have $\beta\in\sigma``\alpha$,
so $C'_{\lambda\alpha}\sub C_\beta$,
so letting $\eta$ be as in the definition of $a'_{\alpha+1}$, we have $\eta\in C_\beta$, so since $\pi(\alpha)\sub\alpha$,
\begin{equation*}\label{eqn:label_for_eqn_32} A_\beta\cap(a'_{\alpha+1}\cut a_\alpha)\sub A_\beta\cap[\eta,\eta+\eta)=\emptyset,\end{equation*}
which suffices.

Let $\mathscr{F}=\{A_\alpha\}_{\alpha<\kappa^+}$.
Then $\mathscr{F}$ is a mad family of cardinality $\kappa^+$.
(Suppose $\mathscr{F}$ is not maximal,
and let $B$ be the $<^M$-least counterexample. Let $\xi$ be position of $B$ with respect to $<^M$. Then $\xi<\kappa^{+}$, and note that there must be some $\lambda\leq\xi$ such that $B$ is the $<^M$-least counterexample to the maximality of $\{A_\alpha\}_{\alpha<\lambda}$. But then $A_\lambda\cap B$ is unbounded in $\kappa$, contradiction.)

We now want to see that $\mathscr{F}$ is $\Pi_1(\{\kappa,\mathfrak{m}\})$.
Well, given $A\sub\kappa$,
we have $A\in\mathscr{F}$ iff
the following two conditions hold:
\begin{enumerate}
 \item Every bounded subset of $\kappa$ is directly encoded into $A$.
 \item For all premice $P$ such that $\OR^P=\kappa$
 and $A\cap\alpha\in P$ for all $\alpha<\kappa$ and $\mathfrak{m}=P|\om_1^P$ and $P\sats$ ``I am the inductive condensation stack  over $\mathfrak{m}$
 (see \cite[Definition 3.12]{V=HODX_pub})'', and all sound premice $Q$ such that $P\pins Q$, $\rho_\om^Q=\kappa$, $Q\sats$ ``$\kappa^+$ exists'',
 and $Q$ satisfies the condensation theorem
 with respect to elementary $\pi:\bar{Q}\to Q$ with $\crit(\pi)=\rho_\om^{\bar{Q}}$ and $\pi(\crit(\pi))=\kappa$ (see \cite{outline}),
 if $A\in Q$ then $Q\sats$ ``there is $\alpha<\kappa^+$ such that $A=A_\alpha$'' (where we define $(\left<A_\alpha\right>_{\alpha<\kappa^{+Q}})^Q$ over $Q$
 from $\mathfrak{m},\kappa$
 just as $\left<A_\alpha\right>_{\alpha<\kappa^+}$ was defined over $M$ from $\mathfrak{m},\kappa$ above).
\end{enumerate}
This equivalence is straightforward to verify. Since these conditions are $\Pi_1(\{\mathfrak{m},\kappa\})$, this completes the proof of part~\ref{item:mouse_Pi_1_mad_family}.

Part~\ref{item:tame_Pi_1_mad_family}:
As $M$ is tame and by
\cite[Theorem 4.2]{odle_v2},
working in $M$,
there is $N\pins M|\om_1$ such that $\mathfrak{m}$
is the unique premouse $P$
with universe $\HC$,
such that $N\pins P$ and $P$ is above-$\OR^N$, $(\om,\om_1)$-iterable. Setting $x=N$, we get a $\Pi_1^{\univ{M}}(\{\kappa,x\})$ description of $\mathscr{F}$
(with $\mathscr{F}$ defined as above),
since for each $A\sub\kappa$,  we have $A\in\mathscr{F}$
iff the following conditions hold:
\begin{enumerate}
 \item Every bounded subset of $\kappa$ is directly encoded into $A$.
 \item For every premouse $P$
 with $\OR^P=\kappa$
 and $\{A\cap\alpha\bigm|\alpha<\kappa\}\sub P$
 and $N\pins P$,
 and every above-$N$, $(\om,\om_1^P)$-strategy for $P|\om_1^P$
 (using $\HC^P=\HC$
 to determine the countable iteration trees involved),
 if $P\sats$ ``I am the inductive condensation stack over $P|\om_1^P$'',
 then for all $Q$ with $P\pins Q$ and as in part~\ref{item:mouse_Pi_1_mad_family},
 if $A\in Q|\kappa^{+Q}$
 then $A=A_\alpha^Q$ for some $\alpha<\kappa^{+Q}$.
\end{enumerate}

Part~\ref{item:M_n,wlw}:
We just give a sketch here.
In case $M=M_n$, then $\mathfrak{m}$
is definable over $\HC=\HC^M$,
which, combined with the foregoing arguments, suffices. If $M$ is the minimal proper class mouse with a Woodin limit of Woodins,
then we use that $\mathfrak{m}$ is
$(\om,\om_1+1)$-iterable in $M$.\footnote{This is a standard fact, but is non-trivial. One can compute the relevant Q-structure for determining the correct $\Tt$-cofinal branch through an $\om$-maximal tree $\Tt$  on $\mathfrak{m}$ by a kind of background construction. Extenders $E$ overlapping $M(\Tt)$ can be added by deriving them from core embeddings, when a model $N_\alpha$ of the construction projects $<\delta(\Tt)$, setting $N_{\alpha+1}=(N_\alpha||\gamma,E)$ for the appropriate cardinal $\gamma$ of $N_\alpha$. One does this until reaching some $N_\alpha$ such that the $\delta(\Tt)$-core of $N_\alpha$ is the desired Q-structure for $M(\Tt)$ (this might also project $<\delta(\Tt)$).} This is enough to define $\mathfrak{m}$ over $\her_\kappa$
if $\kappa>\om_1$.
If $\kappa=\om_1$ we need to do some more work, because with $\Pi_1(\{\kappa\})$ we can then only directly assert that the premouse $P$ under consideration (with universe $\HC$) is $(\om,\om_1)$-iterable. We claim
that given this, it follows that $P=\mathfrak{m}$. The proof of this uses some of the ideas from \cite{odle_v2}, but here there is less going on, so we explain how it works in the present context. Suppose $P\neq\mathfrak{m}$, and let $\Gamma$ be an $(\om,\om_1)$-strategy for $P$,
and $\Sigma$ an $(\om,\om_1+1)$-strategy for $\mathfrak{m}$. Compare  the pair $(\mathfrak{m},P)$ using $(\Sigma,\Gamma)$, producing a pair $(\Tt,\Uu)$ of trees, each of length $\om_1$.
Let $b=\Sigma(\Tt)$. Let $Q\ins M^\Tt_b$ be the Q-structure for $\delta(\Tt)=\om_1$, so $Q$ determines $b$ in the usual manner. We claim that $Q$ similarly determines a $\Uu$-cofinal branch, which gives the usual contradiction. For let $\pi:W\to \her_{\om_2}$ be $\Sigma_{10}$-elementary, where $W$ is countable and transitive, with $(\Tt,\Uu),b\in\rg(\pi)$. Let $\pi(\bar{Q},\bar{b},\bar{\Tt},\bar{\Uu},\eta)=(Q,b,\Tt,\Uu,\om_1)$. Then $[0,\eta)^\Tt=\bar{b}=b\cap\eta$ is determined by $\bar{Q}$. Let $R=M(\Tt)=M(\Uu)$. Suppose first that $R\sats$ ``There is a proper class of Woodins''. Then by the minimality of $M$,
$Q=\J_\alpha(R)$ for some $\alpha\in\OR$, so $\bar{Q}=\J_{\bar{\alpha}}(R|\eta)$ for some $\bar{\alpha}$. It follows that $\bar{Q}\pins R$ and $\bar{Q}$ is a (hence the) Q-structure for $\eta$ in $R$, but then $\bar{Q}$ did determine $[0,\eta)^\Uu$, and hence $Q$ determines a $\Uu$-cofinal branch, as desired. Now suppose
instead that $R\sats$ ``There is not a proper class of Woodins''. Then $\delta<\eta$, where $\delta$ is the sup of Woodins in $R$.
We have $\bar{Q}\ins M^\Tt_\eta$, but since $\eta$ is not Woodin in $R$, it follows that $\OR^{\bar{Q}}<\min(\lh(E^\Tt_\eta),\lh(E^\Uu_\eta))$. But then $\bar{Q}\pins R=M(\Uu)$ and $\bar{Q}\pins M^\Uu_\eta$, so $\bar{Q}$ determines $[0,\eta)^\Uu$, which again suffices.
\end{proof}

\section{Independent families}\label{sec:indep_families}

\begin{dfn}\label{dfn:8.1}
 For $\kappa$ an uncountable cardinal, an \emph{independent family at $\kappa$}
 is a set $\mathscr{F}\sub\pow(\kappa)$
 such that for all finite sets $\mathscr{A},\mathscr{B}\sub\mathscr{F}$ with $\mathscr{A}\cap\mathscr{B}=\emptyset$,
 letting $\mathscr{B}'=\{\kappa\cut B\bigm|B\in\mathscr{B}\}$, we have
 \begin{equation*}\label{eqn:label_for_eqn_33} \card\Big(\big(\bigcap\mathscr{A}\big)\cap\big(\bigcap\mathscr{B}'\big)\Big)=\kappa.\end{equation*}

 A \emph{maximal independent family}
 is an independent family
 $\mathscr{F}$ for which there is no independent family $\mathscr{F}'$ with $\mathscr{F}\psub\mathscr{F}'$.
\end{dfn}

There is a result for maximal independent families analogous to
that for mad families given by Theorem~\ref{tm:regular_kappa_Pi_1_kappa_mad_families}:

\begin{tm}\label{tm:regular_kappa_Pi_1_kappa_mi_families}
Let $M$ be a $(0,\om_1+1)$-iterable premouse, and let $\kappa\in\OR^M$
be such that $M\sats$ ``$\kappa>\om$ is a regular cardinal and $\kappa^+$ exists''.
Let $\mathfrak{m}=M|\om_1^M$.
Then:
\begin{enumerate}
 \item \label{item:mouse_Pi_1_mi_family}
$M\sats$ ``there is a maximal independent family $\mathscr{F}\sub\pow(\kappa)$ of cardinality $\kappa^+$ which is $\Pi_1(\{\kappa,\mathfrak{m}\})$''.
\item\label{item:tame_Pi_1_mi_family} If $M$ is tame then $M\sats$ ``there is a maximal independent family $\mathscr{F}\sub\pow(\kappa)$
of cardinality $\kappa^+$ which is $\Pi_1(\{\kappa,x\})$
for some $x\in\RR$''.
\item\label{item:M_n,wlw_mi_family} If $M=M_n$ for some $n<\om$ \tu{(}see Theorem~\ref{tm:regular_kappa_Pi_1_kappa_mad_families} part~\ref{item:M_n,wlw}\tu{)}
or $M$ is the minimal proper class mouse satisfying ``there is a Woodin limit of Woodin cardinals'' then $M\sats$ ``there is a maximal independent family $\mathscr{F}\sub\pow(\kappa)$ of cardinality $\kappa^+$ which is $\Pi_1(\{\kappa\})$''.
\end{enumerate}
\end{tm}

\begin{proof}
 We use a method similar to that for mad families. The key is again to arrange that for every $A\in\mathscr{F}$, every bounded subset of $\kappa$ is directly encoded into $A$. But the methods for instantiating this are somewhat different to those we used for mad families.

 Work in $M$.
 Fix a canonical surjection $\pi:\kappa\to\pow({<\kappa})$
 with $\pi(\alpha)\sub\alpha$
 for each $\alpha<\kappa$,
 as in the proof of Theorem~\ref{tm:regular_kappa_Pi_1_kappa_mad_families}.
 We construct a sequence $\left<\mathscr{F}_\alpha\right>_{\alpha\leq\kappa^+}$ of independent families $\mathscr{F}_\alpha\sub\pow(\kappa)$, recursively in $\alpha$,
 maintaining that each $\mathscr{F}_\alpha$ is an independent family of cardinality $\kappa$, and for every $A\in\mathscr{F}_\alpha$,
 every bounded subset of $\kappa$ is directly encoded into $A$,
 and such that $\mathscr{F}_\alpha\sub\mathscr{F}_\beta$ for all $\alpha<\beta<\kappa^+$.
 At limit stages $\lambda$ we just take a union, setting $\mathscr{F}_\lambda=\bigcup_{\alpha<\lambda}\mathscr{F}_\alpha$; note that this maintains the inductive hypotheses (except that the cardinality of $\mathscr{F}_{\kappa^+}$ will be $\kappa^+$).
 At the end, defining $\mathscr{F}=\mathscr{F}_{\kappa^+}$, this will be a maximal independent family.

 Now let us define $\mathscr{F}_0$.
 Let $\left<I_\alpha\right>_{\alpha<\kappa}$
 be the least enumeration of pairwise disjoint intervals $\sub\kappa$, such that each $I_\alpha$ has length $\alpha$. Say that $\alpha<\kappa$
  is \emph{even} if it has form $\eta+2n$
  for some ordinal $\eta$ which is either $0$ or a limit ordinal and some $n<\om$,
  and \emph{odd} otherwise. Let $\OR_{\mathrm{even}}$
  be the class of even ordinals, and $\OR_{\mathrm{odd}}$ the odd ordinals. Let $B_0=\bigcup_{\alpha\in\kappa\cap\OR_{\mathrm{even}}}I_\alpha$ and $B_1=\bigcup_{\alpha\in\kappa\cap\OR_{\mathrm{odd}}}I_\alpha$. Let $B_0'$ be the $M$-least subset of $B_0$
 such that every bounded subset of $\kappa$
 is directly encoded as a subset of $I_\alpha$
 for some even $\alpha$.
 (By GCH, there are only $\kappa$-many such bounded sets. So since $I_\alpha$ has length $\alpha$
 and the $I_\alpha$'s are pairwise disjoint,  $B'_0$ exists.)
 Let $\mathscr{G}$ be the $M$-least independent family at $\kappa$
 of cardinality $\kappa$
 with $\bigcup\mathscr{G}\sub B_1$. Now set
 \begin{equation*}\label{eqn:label_for_eqn_34}\mathscr{F}_0=\{B_0'\cup A\bigm|A\in\mathscr{G}\}.\end{equation*}
 Note that $\mathscr{F}_0$ is an independent family  of cardinality $\kappa$, and every $A\in\mathscr{F}_0$ directly encodes every bounded subset of $\kappa$.

 Now suppose we have defined $\mathscr{F}_\alpha$ with the right properties; it  remains to  construct $\mathscr{F}_{\alpha+1}$.
 Let $A\sub\kappa$ be  $M$-least
  such that
 $A\notin\mathscr{F}_\alpha$
 and $\mathscr{F}_\alpha\cup\{A\}$
 is independent, if such an $A$ exists. We will define $\mathscr{F}_{\alpha+1}$
 to be (an independent family) such that if $A\notin\mathscr{F}_{\alpha+1}$ then $\mathscr{F}_{\alpha+1}\cup\{A\}$ is not independent, maintaining the inductive hypotheses.
 We will do this by adding exactly two new sets, $B_0$ and $B_1$
 (we will set $\mathscr{F}_{\alpha+1}=\mathscr{F}_\alpha\cup\{B_0,B_1\}$),
 with $A\sub B_0\cup B_1$,
 which certainly takes care of making $\mathscr{F}_{\alpha+1}\cup\{A\}$ non-independent, if $A\notin\{B_0,B_1\}$. For the purposes of the definability of $\mathscr{F}$, we will also ensure that $A$ itself can easily be computed from each of $B_0$ and $B_1$.

 Let $\sigma:\kappa\to[\mathscr{F}_\alpha]^{<\om}\cross[\mathscr{F}_\alpha]^{<\om}$ be the $M$-least function such that for each $\beta<\kappa$,
 if $\sigma(\beta)=(X,Y)$ then $X\cap Y=\emptyset$,
 and  $\sigma^{-1}(\{(X,Y)\})$ has cardinality $\kappa$
 for each  $(X,Y)\in[\mathscr{F}_\alpha]^{<\om}\cross[\mathscr{F}_\alpha]^{<\om}$ such that $X\cap Y=\emptyset$. Let\begin{equation*}\label{eqn:label_for_eqn_35}\tau(\beta)=\Big(\bigcap_{X\in\sigma(\beta)_0}X\Big)\cap\Big(\bigcap_{X\in\sigma(\beta)_1}(\kappa\cut X)\Big)\end{equation*}
 where $(x,y)_0=x$ and $(x,y)_1=1$. So $\tau(\beta)\cap A$ and $\tau(\beta)\cut A$ are both unbounded in $\kappa$ for each $\beta$ (by choice of $A$). We construct $B_0,B_1$ through $\kappa$-many stages $\beta<\kappa$ by recursion on $\beta$,
 determining  some $\eta_\beta<\kappa$ with $\beta\leq\eta_\beta$, and $B_0\cap\eta_\beta$ and $B_1\cap\eta_\beta$, before stage $\beta$. Set $\eta_0=0$.
  Consider stage $\beta$, given $\eta_\beta$, $B_0\cap\eta_\beta$
  and $B_1\cap\eta_\beta$.
  Let us now specify $B_0\cap[\eta_\beta,\om\eta_\beta)$
  and $B_1\cap[\eta_\beta,\om\eta_\beta)$.

  We will first arrange that $\pi(\beta)$ will be directly encoded into $B_0$ and into $B_1$.
  For $n<\om$,
  we first put \begin{equation*}\label{eqn:label_for_eqn_36}[(2n+1)\eta_\beta,(2n+2)\eta_\beta)\sub B_1,\end{equation*}
  \begin{equation*}\label{eqn:label_for_eqn_37}[(2n+2)\eta_\beta,(2n+3)\eta_\beta)\sub B_0.\end{equation*}
  Note that this ensures
  that $[\eta_\beta,\om\eta_\beta)\sub B_0\cup B_1$,
  so that we certainly won't violate the requirement that $A\sub B_0\cup B_1$ within the interval $[\eta_\beta,\om\eta_\beta)$.
  Now we directly encode $\pi(\beta)$ in $B_0$ at $\eta_\beta$ (and since $\pi(\beta)\sub\beta\leq\eta_\beta$, this requires at most the interval $[\eta_\beta,2\eta_\beta)$; we don't put any further ordinals into $B_0\cap [\eta_\beta,2\eta_\beta)$ beyond those needed for this).
  And we also directly encode $\pi(\beta)$ in $B_1$ at $2\eta_\beta$ (within the interval $[2\eta_\beta,3\eta_\beta)$).
  We now want to ensure that $A\cap[\eta_\beta,\om\eta_\beta)$
  can be easily recovered from $B_i\cap[3\eta_\beta,\om\eta_\beta)$, for $i\in\{0,1\}$. We do this by sliding the sets $A_{[(n+1)\eta_\beta,(n+2)\eta_\beta)}$
  up appropriately, staggered between $B_0$ and $B_1$,
  into the remaining intervals we have available.
  For $\eta\leq\delta$,
  let $A_{[\eta,\delta)}=\{\xi\bigm|\eta+\xi\in A\cap\delta\}$.
  Then we directly encode $A_{[\eta_\beta,2\eta_\beta)}$
  in $B_0$ at $3\eta_\beta$
  (which determines $B_0\cap[3\eta_\beta,4\eta_\beta)$)
  and in $B_1$ at $4\eta_\beta$
  (determining $B_1\cap[4\eta_\beta,5\eta_\beta)$).
  Likewise, we directly encode $A_{[(n+1)\eta_\beta,(n+2)\eta_\beta)}$
  in $B_0$ at $(3+2n)\eta_\beta$
  and in $B_1$ and $(4+2n)\eta_\beta$, for each $n<\om$.
  This determines $B_0\cap[\eta_\beta,\om\eta_\beta)$ and $B_1\cap[\eta_\beta,\om\eta_\beta)$.

  We next want to take measures
  to help ensure the independence
  of $\mathscr{F}_\alpha\cup\{B_0,B_1\}$.
   We find the least  3 ordinals $\xi_0<\xi_1<\xi_2$ such that $\xi_i\geq \omega\eta_\beta$
  and  $\xi_0,\xi_1,\xi_2\in\tau(\beta)$,
  and then the least ordinal $\xi_3$ such that $\xi_2<\xi_3$ and $\xi_3\in \tau(\beta)\cut A$.
  We then put all ordinals in the interval $[\omega\eta_\beta,\xi_3]$
  into both $B_0,B_1$, excepting that we declare $\xi_1,\xi_3\notin B_0$ and $\xi_2,\xi_3\notin B_1$.
  This determines $B_0\cap(\xi_3+1)$ and $B_1\cap(\xi_3+1)$. Let $\gamma$ be such that $\om\eta_\beta+\gamma=\xi_3+1$. We then set
  \begin{equation*}\label{eqn:label_for_eqn_38} [\om\eta_\beta+(2n+1)\gamma,\om\eta_\beta+(2n+2)\gamma)\sub B_1,\end{equation*}
  \begin{equation*}\label{eqn:label_for_eqn_39} [\om\eta_\beta+(2n+2)\gamma,\om\eta_\beta+(2n+3)\gamma)\sub B_0.\end{equation*}
  We then, much like before, directly encode $A_{[\om\eta_\beta+n\gamma,\om\eta_\beta+(n+1)\gamma)}$
  in $B_0$ at $\om\eta_\beta+(2n+1)\gamma$, and in $B_1$ at $\om\eta_\beta+(2n+2)\gamma$. Defining $\eta_{\beta+1}=\om\eta_\beta+\om\gamma$,
  this determines $B_0\cap\eta_{\beta+1}$ and $B_1\cap\eta_{\beta+1}$, completing this round.
  Note that  $A\cap[\eta_\beta,\eta_{\beta+1})\sub B_0\cup B_1$, since in fact
  $\xi_3\notin A$ and
\begin{equation*}\label{eqn:label_for_eqn_40} [\eta_\beta,\eta_{\beta+1})\cap(B_0\cup B_1)=[\eta_\beta,\eta_{\beta+1})\cut\{\xi_3\}.\end{equation*}

  At limits $\lambda<\kappa$
  set $\eta_\lambda=\sup_{\alpha<\lambda}\eta_\alpha$.

  This completes the construction of $B_0,B_1$.
  Clearly each bounded subset of $\kappa$ is encoded directly into both $B_0,B_1$, and $A\sub B_0\cup B_1$. And note that every finite Boolean combination of elements $\mathscr{F}_\alpha\cup\{B_0,B_1\}$ is unbounded in $\kappa$ (recall
  that $\sigma^{-1}(X,Y)$ is unbounded in $\kappa$ for each $(X,Y)\in[\mathscr{F}_\alpha]^{<\om}\cross[\mathscr{F}_\alpha]^{<\om}$ with $X\cap Y=\emptyset$). So we have maintained the inductive requirements. Note also that $A$ is easily recoverable from $B_0$. To do this, recover $\eta_\beta$, $\eta_{\beta+1}$ and $A\cap[\eta_\beta,\eta_{\beta+1})$,
  recursively in $\beta$.
  Suppose we have $\eta_\beta$, and adopt notation as above. Then $\xi_3$ is just the second ordinal $\xi>\om\eta_\beta$
  such that $\xi\notin B_0$. From this, we can recover $\gamma$, and $\eta_{\beta+1}$. From these ordinals,
  we can recover $A\cap[\eta_\beta,\om\eta_\beta)$ and $A\cap[\om\eta_\beta,\eta_{\beta+1})$,
  considering how it was encoded above.
  Likewise, $A$ is also easily recoverable from $B_1$.

  This completes the construction of $\mathscr{F}$. Clearly it is an independent family. And its maximality is established just like  in the proof of Theorem~\ref{tm:regular_kappa_Pi_1_kappa_mad_families}.

  Now we want to see that $\mathscr{F}$ is appropriately definable. This is much like in the proof of Theorem~\ref{tm:regular_kappa_Pi_1_kappa_mad_families}, using the following observation. Given  $B\sub\kappa$, we have $B\in\mathscr{F}$ iff
   for all $Q\pins M$ with $\rho_\om^Q=\kappa$
   and $Q\sats$ ``$\kappa^+$ exists'' and $B\in Q$,
   we have $B\in\mathscr{F}^Q$,
   where $\mathscr{F}^Q$ is defined over $Q$ just as $\mathscr{F}$ was over $M$.
   For it's easy to see that $\mathscr{F}^Q\sub \mathscr{F}$
   (in fact $\mathscr{F}^Q_\alpha=\mathscr{F}_\alpha$
   for all $\alpha<\kappa^{+Q}$).
   And conversely, if $B\in\mathscr{F}$ then fix $\alpha<\kappa^{+}$ such that $B\in\{B_0,B_1\}$ where these are the sets produced at stage $\alpha$, and let $A$ be the corresponding set at that stage. Then because $A$ is easily recovered from $B$,
   we have $A\in Q|\kappa^{+Q}$,
   and it follows that $\alpha<\kappa^{+Q}$,
   and so $B\in\mathscr{F}_{\alpha+1}=\mathscr{F}_{\alpha+1}^Q$.
   Using this observation,
   the methods used in the proof of Theorem~\ref{tm:regular_kappa_Pi_1_kappa_mad_families} allow us to produce an appropriate definition of $\mathscr{F}$.
\end{proof}

\section{Filters above  Measurables}\label{sec:ultrafilters}

We make a couple of simple observations on definability of filters.

\begin{tm}\label{tm:9.1}Assume ZFC, $\kappa$ is a limit of measurable cardinals
and $\cof(\kappa)=\om$.
Then there is no $\Sigma_1(V_\kappa\cup\OR)$ ultrafilter over $\kappa$
which contains no bounded subsets of $\kappa$.
\end{tm}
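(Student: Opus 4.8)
The statement should follow the same template as Theorem \ref{tm:above_inf_many_meas}(\ref{item:no_Sigma_1_long_wellorder}): an ultrafilter is really a (very rudimentary) kind of ``wellorder-like'' structure, since it gives a way to canonically choose between $X$ and $\kappa\setminus X$, and iterating measures moves the relevant sets around in a way an $\om$-sequence of iteration maps cannot respect. So suppose $\mathscr{U}\sub\pow(\kappa)$ is $\Sigma_1(\{\vec{p}\})$ with $\vec{p}\in\her_\kappa\cup\OR$, is an ultrafilter over $\kappa$ containing no bounded subsets of $\kappa$, and let $\varphi$ be the defining $\Sigma_1$ formula. As in the proof of Theorem \ref{tm:above_inf_many_meas}, using Lemma \ref{lem:ordinals_ev_stable_meas_limit}, fix an increasing $\om$-sequence $\left<\kappa_n\right>_{n<\om}$ of measurable cardinals cofinal in $\kappa$ with $\vec{p}\cap V_\kappa\in V_{\kappa_0}$ and with $i^\Tt(\xi)=\xi$ for each ordinal $\xi\in\vec{p}$ whenever $\Tt$ is a linear iteration of measures on $V$ of length $\leq\kappa+1$, above $\kappa_0$, based on $V_\kappa$, with $i^\Tt(\kappa)=\kappa$. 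Fix normal measures $U_n$ on $\kappa_n$.

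First I would show there is a set $X\in\pow(\kappa)$ that is ``unstable at every $\kappa_n$'' in the sense that $i^{\Tt_{U_n}}_{0\kappa}(X\cap\kappa_n)\neq X$ and $i^{\Tt_{U_n}}_{0\kappa}((\kappa\setminus X)\cap\kappa_n)\neq\kappa\setminus X$ for all $n$; thinning the sequence, I may further arrange $i^{\Tt_{U_n}}_{0\kappa_{n+1}}(X\cap\kappa_n)\neq X\cap\kappa_{n+1}$, and similarly for $\kappa\setminus X$. (Such an $X$ exists because $\pow(\kappa_n)$ has size $<\kappa$, while for the argument I only need that $X$ is not equal to the $i^{\Tt_{U_n}}_{0\kappa}$-image of its own trace on $\kappa_n$ — if every $X$ failed this for some fixed $n$, then $\pow(\kappa)$ would inject into $\pow(\kappa_n)$, absurd; and the dependence on the exact branch through $\{\kappa_n\}$ is handled exactly as in Theorem \ref{tm:above_inf_many_meas}.) Now for $A\sub\{\kappa_n\}_{n<\om}$ infinite, let $\Uu_A$ be the iteration using measures from $\{U_n\}$ and their images determined by $i^{\Uu_A}(\kappa_n)=\kappa^A_n$ (the increasing enumeration of $A$), with increasing critical points, exactly as in the proof of Theorem \ref{tm:above_inf_many_meas}; then $i^{\Uu_A}(\kappa)=\kappa$, so $i^{\Uu_A}(\vec{p})=\vec{p}$, and $M_A=M^{\Uu_A}_\infty$ is wellfounded with embedding $i_A:V\to M_A$. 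Since $\varphi$ is $\Sigma_1$ and $\mathscr{U}$ is definable from $\vec{p}$, which is fixed, we get $i_A(\mathscr{U})=\mathscr{U}\cap M_A$ (upward absoluteness gives $i_A(\mathscr{U})\sub\mathscr{U}$; the converse uses that in $M_A$, $i_A(\mathscr{U})$ is an ultrafilter over $\kappa$ and $\mathscr{U}$ contains no bounded sets, so for $Y\in\mathscr{U}\cap M_A$, either $Y$ or $\kappa\setminus Y$ lies in $i_A(\mathscr{U})\sub\mathscr{U}$, and $\mathscr{U}$ being a filter forces $Y\in i_A(\mathscr{U})$). In particular, whichever of $i_A(X),i_A(\kappa\setminus X)$ belongs to $\mathscr{U}$ also belongs to $i_A(\mathscr{U})$.

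The heart of the argument is then the descending-sequence construction. Exactly as in Theorem \ref{tm:above_inf_many_meas}(\ref{item:no_Sigma_1_long_wellorder}): for $A,B$ infinite subsets of $\{\kappa_n\}$ with $\{\kappa_n\}\setminus A$ and $\{\kappa_n\}\setminus B$ finite and $B\subsetneq A$, one has a factor map $i_{AB}:M_A\to M_B$ with $i_{AB}\circ i_A = i_B$, computed inside $M_A$ from the parameter $i_A(\{\kappa_n\}_{n<\om})$, via the simple normalization argument cited there; and $i_A(X)\neq i_B(X)$, $i_A(\kappa\setminus X)\neq i_B(\kappa\setminus X)$, by the instability arrangement. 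Now the key point: membership in $\mathscr{U}$ of the images is eventually ``monotone'' along such chains. Consider the question, for each such $A$, whether $i_A(X)\in\mathscr{U}$ or $i_A(\kappa\setminus X)\in\mathscr{U}$ (exactly one holds). If for every $A$ the answer were ``$i_A(X)\in\mathscr{U}$'', then applying $i_{AB}$ (equivalently $i_B$, using $i_B(Y)=i_{AB}(i_A(Y))$ for $Y\in V$) to the statement ``$i_A(X)\in\mathscr{U}$'' — legitimate since $i_{AB}(\mathscr{U}\cap M_A)=i_{AB}(i_A(\mathscr{U}))=i_B(\mathscr{U})=\mathscr{U}\cap M_B$ and $i_A(X)\in\mathscr{U}\cap M_A$ — would give $i_B(i_A(X))\in\mathscr{U}$; but one checks $i_B(i_A(X))=i_{AB}(i_A(X))$ need not equal $i_B(X)$, so this line must be re-examined. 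The clean way, mirroring the cited proof, is: I claim that for $B\subsetneq A$ as above, the ``$\mathscr{U}$-side'' of $i_B(X)$ cannot be obtained by pushing forward the $\mathscr{U}$-side of $i_A(X)$ strictly downward forever. Concretely, iterating: pick $B=\{\kappa_{2n}\}_{n<\om}$ and $A_n=B\cup\{\kappa_1,\dots,\kappa_{2n-1}\}$, so $A_{n}\subsetneq A_{n+1}$; the factor maps compose, and applying the $\Sigma_1$-absoluteness / elementarity reasoning as in Theorem \ref{tm:above_inf_many_meas} to ``$i_{A_{n+1}}(Y)\in\mathscr{U}$'' (where $Y\in\{X,\kappa\setminus X\}$ is whichever side is in $\mathscr{U}$) produces $i_{A_n}(Y')\in\mathscr{U}$ for the image $Y'$, and the fact that these images are all distinct, together with the ill-foundedness that a genuinely infinite strictly descending chain of such applications would force on some single iterate (obtained by concatenating the $\Uu_{A_n}$-type iterations into one linear iteration of measures, as in Lemma \ref{lem:ordinals_ev_stable_meas_limit}'s proof), yields a contradiction.

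\textbf{Main obstacle.} The delicate point — and the only place where the $\cof(\kappa)=\om$ hypothesis and the ``no bounded sets'' hypothesis really enter — is the bookkeeping in the last step: making sure that an ultrafilter, which unlike a wellorder gives only one bit of information per set, nonetheless generates a genuinely infinite descending pattern under the tower of factor maps. The resolution is that the single bit ``$X$ or $\kappa\setminus X$'' is enough precisely because $\mathscr{U}$ is upward closed and contains no bounded sets, so $i_A(\mathscr{U}) = \mathscr{U}\cap M_A$ exactly (both inclusions), which lets the one chosen side be transported faithfully along the whole commuting tower; then the distinctness of the images $i_{A_n}(X)$ guarantees the pattern does not stabilize, and the concatenated iteration is ill-founded. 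I expect this to be essentially identical in structure to the proof of Theorem \ref{tm:above_inf_many_meas}(\ref{item:no_Sigma_1_long_wellorder}) once the equality $i_A(\mathscr{U})=\mathscr{U}\cap M_A$ is in hand, and I would present it by reduction to that argument with $\mathscr{U}$ playing the role of $<^*$ restricted to the pairs $\{Y,\kappa\setminus Y\}$.
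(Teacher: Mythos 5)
There is a genuine gap here, and it comes from transplanting the wrong contradiction. In the wellorder argument of Theorem \ref{tm:above_inf_many_meas}, the final contradiction is an infinite $<^*$-descending sequence, i.e.\ a violation of the wellfoundedness of $<^*$. An ultrafilter carries no wellfounded relation, so there is nothing for your ``descending chain of applications'' to descend through: the concatenated linear iteration of measures on $V$ is always wellfounded, and non-stabilization of the single bit ``which of $i_{A_n}(X),\,i_{A_n}(\kappa\setminus X)$ lies in $\mathscr{U}$'' is not by itself contradictory. Your closing paragraph essentially concedes that this bookkeeping is the unresolved point, and it cannot be resolved along these lines. The second problem is the choice of $X$: you only arrange $i_A(X)\neq X$ (instability), but for an ultrafilter the useful fact is the \emph{intersection} property, and $i_A(X)\neq X$ gives no control over $X\cap i_A(X)$, which could perfectly well be unbounded.

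The correct source of contradiction is the filter property together with ``no bounded sets'', and it makes the argument much shorter than the wellorder case. After fixing $\kappa_0<\kappa_1<\cdots$ cofinal in $\kappa$ with the defining parameters stable under the relevant iterations (via Lemma \ref{lem:ordinals_ev_stable_meas_limit}), take the explicit sets $X=\bigcup_{n<\om}[\kappa_{2n},\kappa_{2n+1})$ and $Y=\bigcup_{n<\om}[\kappa_{2n+1},\kappa_{2n+2})$. One of them, say $X$, is in $\mathscr{U}$ (their union is co-bounded). Now iterate the measures at the $\kappa_n$'s so as to shift the intervals by one step, obtaining $j$ with $j(\vec{p})=\vec{p}$ and $X\cap j(X)=\emptyset$. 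By $\Sigma_1$ upward absoluteness, $j(\mathscr{U})\sub\mathscr{U}$, so $j(X)\in\mathscr{U}$ as well; but then $X\cap j(X)\in\mathscr{U}$ is empty, contradicting that $\mathscr{U}$ is a filter containing no bounded subsets of $\kappa$. Note that only the inclusion $j(\mathscr{U})\sub\mathscr{U}$ is needed, not your equality $i_A(\mathscr{U})=\mathscr{U}\cap M_A$, and no factor maps or towers of iterations are required.
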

\begin{proof}
 After arranging for fixing the defining elements, just consider a sequence of measurables $\kappa_0<\kappa_1<\ldots$ cofinal in $\kappa$, and the sets $X=\cup_{n<\om}[\kappa_{2n},\kappa_{2n+1})$ and $Y=\bigcup_{n<\om}[\kappa_{2n+1},\kappa_{2n+2})$.
 Either $X\in U$ or $Y\in U$.
 Say $X\in U$ (otherwise it is similar). Then we can easily iterate
 at the $\kappa_n$'s, sending $X$ to some $X'$ with $X\cap X'=\emptyset$ (and we have chosen the $\kappa_n$'s so that the relevant iteration will fix the defining parameters).
 This is a contradiction.
\end{proof}

Let $\mathrm{Club}_\kappa$ denote the club filter at $\kappa$.
It was shown by Friedman
and  Wu in \cite[Proposition 2.1]{lcdnsi}
that if $\kappa$ is weakly compact then $\mathrm{Club}_\kappa$
is not $\Pi_1(\her_{\kappa^+})$.
And it was shown by  L\"ucke,
 Schindler and  Schlicht
in \cite[Theorem 1.9]{luecke_schindler_schlicht}
that if $\kappa$ is a regular cardinal which is a stationary
limit of $\om_1$-iterable cardinals
then $\mathrm{Club}_\kappa$
is not $\Pi_1(\{\kappa\})$. We prove a variant of these results here.
Neither part of Theorem~\ref{tm:club_filter} is a direct consequence of the results just mentioned; in particular in  part~\ref{item:club_filter_at_limit_of_measurables}, we do not demand that $\kappa$ be a \emph{stationary} limit of measurables (and we also allow arbitrary parameters in $V_\kappa\cup\OR$).
If $\kappa$ is the least inaccessible limit of measurables,
then the set $C$ of all  limits of measurables $<\kappa$
is club in $\kappa$
and consists of singular cardinals. But every $\om_1$-iterable cardinal is inaccessible
(see \cite[Propositions 2.8, 3.1]{gitman_ramsey-like_cardinals}),
so \cite[Theorem 1.9]{luecke_schindler_schlicht}
requires that $\kappa$
be a stationary limit of inaccessibles (and more).
\begin{tm}\label{tm:club_filter}
Let $\kappa$ be an  uncountable  cardinal.  Then:
\begin{enumerate}\item\label{item:club_filter_above_one_measurable}
 Suppose $\mu<\kappa$ and $\mu$ is measurable, $\kappa$ is $\mu$-steady and $\cof(\kappa)>\mu$.
 Then $\mathrm{Club}_\kappa$ is not $\Pi_1(V_\mu\cup\{\kappa,\kappa^+\})$. In fact, let $U$ be any normal measure on $\mu$
 and let $S_j=\{x\bigm|j(x)=x\}$
 where $j=i^V_U:V\to M=\Ult(V,U)$ is the ultrapower map \tu{(}so $V_\mu\cup\{\kappa,\kappa^+\}\sub S_j$
 \tu{)}.
 Then $\mathrm{Club}_\kappa$
 is not $\Pi_1(S_j)$.
 \item\label{item:club_filter_at_limit_of_measurables} Suppose $\kappa$ is an inaccessible limit of measurables.
 Then $\mathrm{Club}_\kappa$ is not $\Pi_1(V_\kappa\cup\OR)$.
 \end{enumerate}
\end{tm}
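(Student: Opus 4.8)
# Proof Proposal for Theorem \ref{tm:club_filter}

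The plan is to treat the two parts by the same iteration-of-measures method used throughout the paper, exploiting the fact that a $\Pi_1$ definition of the club filter is downward absolute along iteration embeddings, while clubs and their complements can be "moved off themselves" by suitable iterations fixing the relevant parameters.

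For part \ref{item:club_filter_above_one_measurable}, suppose toward a contradiction that $\mathrm{Club}_\kappa$ is $\Pi_1(S_j)$, say $C\in\mathrm{Club}_\kappa\iff\psi(\vec p,C)$ with $\vec p\in S_j^{<\om}$ and $\psi$ a $\Pi_1$ formula. First I would pass from $j=i^V_U$ to the full linear iteration $\Tt_U$ as in Definition \ref{dfn:linear_iteration}; note that $S_j\subseteq S_\kappa$ in the notation of Theorem \ref{tm:above_one_measurable}, i.e.\ each $p\in\vec p$ is $U$-$[0,\kappa)$-stable, so $i^{\Tt_U}_{0\lambda}(\vec p)=\vec p$ for all $\lambda<\kappa$, and $i^{\Tt_U}_{0\lambda}(\kappa)=\kappa$ by Lemma \ref{lem:many_U-it_fixed_points_above_meas}\ref{item:it_maps``kappa_sub_kappa}. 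The key object is the set $D=\{\mu_\alpha\bigm|\alpha<\kappa\}$ where $\mu_\alpha=i^{\Tt_U}_{0\alpha}(\mu)$: since $\cof(\kappa)>\mu$ and, more to the point, $\kappa$ is $\mu$-steady so $i^{\Tt_U}_{0\kappa}$ is well-defined with $i^{\Tt_U}_{0\kappa}(\kappa)=\kappa$, the set $D$ is club in $\kappa$ (it is the range of the continuous increasing critical-point function; here the hypothesis $\cof(\kappa)\neq\mu$ together with the steadiness is what makes the critical sequence cofinal and closed). Now $D$ is club, so $\psi(\vec p,D)$ holds; applying $i^{\Tt_U}_{0\lambda}$ for $\lambda<\kappa$ and using $\Sigma_1$/$\Pi_1$ absoluteness we get that in $M^{\Tt_U}_\lambda$, $i^{\Tt_U}_{0\lambda}(D)$ is club-from-the-point-of-view-of-$M^{\Tt_U}_\lambda$; but the complement $\kappa\setminus D$ also ought to be handled. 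The cleaner route: pick two disjoint candidate clubs, e.g.\ $D$ and a "shifted" version, via an alternative iteration $\Uu$ of $U$ that is above $\mu$, fixes $\vec p$, fixes $\kappa$, but sends $D$ to a $D'$ with $D\cap D'$ bounded in $\kappa$ (build $\Uu$ by iterating so that the critical points interleave the gaps of $D$). Since $\Uu$ fixes $\vec p$ and is an iteration of measures, $\psi(\vec p,D)$ is preserved and $\psi(\vec p,D')$ holds, so both $D,D'\in\mathrm{Club}_\kappa$; but a club filter is closed under finite intersections and contains no bounded sets, so $D\cap D'\in\mathrm{Club}_\kappa$ is unbounded, contradiction. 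The downward direction of $\Pi_1$ absoluteness is what forces the image statements to genuinely hold.

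For part \ref{item:club_filter_at_limit_of_measurables}, the argument is the same in spirit but uses Lemma \ref{lem:ordinals_ev_stable_meas_limit} to absorb the parameters. Given a $\Pi_1(V_\kappa\cup\OR)$ definition $\psi(\vec p,\cdot)$ of $\mathrm{Club}_\kappa$ with $\vec p\in(V_\kappa\cup\OR)^{<\om}$, fix by Lemma \ref{lem:ordinals_ev_stable_meas_limit} a $\kappa_0<\kappa$ such that every linear iteration of measures above $\kappa_0$, based on $V_\kappa$, with $i^\Tt(\kappa)=\kappa$, fixes every ordinal in $\vec p$ (and fixes $\vec p\cap V_\kappa$ since $\vec p\cap V_\kappa\in V_{\kappa_0}$ once $\kappa_0$ is large enough, using inaccessibility of $\kappa$). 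Fix measurables $\kappa_0<\kappa_1<\cdots$ cofinal in $\kappa$ (here I use that $\kappa$ is a limit of measurables and inaccessible, so I may take the $\kappa_n$ measurable and, shrinking, inaccessible) with measures $U_n$ on $\kappa_n$, and as in the proof of Theorem \ref{tm:above_inf_many_meas}, form iterations $\Uu_A$ for $A\subseteq\{\kappa_n\}$ cofinal in $\kappa$, each of the kind fixing $\vec p$ and with $i^{\Uu_A}(\kappa)=\kappa$. Let $D$ be the club of limit points of the critical sequence (or the range of $\{\kappa_n\}$ together with its iteration images); choosing $A,A'$ with $A\cap A'$ sufficiently sparse produces iterates sending $D$ to disjoint-up-to-bounded clubs $D,D'$, again contradicting closure of $\mathrm{Club}_\kappa$ under intersection together with $\Pi_1$ downward absoluteness along $i^{\Uu_A}$, $i^{\Uu_{A'}}$.

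The main obstacle I anticipate is organizing the iteration so that the image of a chosen club really becomes almost disjoint from the original club while simultaneously fixing all defining parameters and fixing $\kappa$. This requires exactly the parameter-stabilization bookkeeping of Lemmas \ref{lem:many_U-it_fixed_points_above_meas} and \ref{lem:ordinals_ev_stable_meas_limit} and the copying/normalization of iterations from Lemma \ref{lem:embedding_agreement}, together with care that the chosen "club" $D$ is definable enough (it is: it is the critical sequence, which is moved in a controlled way by the iteration) that its images under the two iterations are genuinely disjoint on a tail. A secondary subtlety is that in part \ref{item:club_filter_above_one_measurable} the hypothesis $\cof(\kappa)>\mu$ (rather than merely $\neq\mu$) is used to ensure the relevant critical sequence at $\kappa$ is continuous and the club-ness of $D$ survives; I would double-check that the weaker steadiness hypothesis alone does not already suffice, and if it does, state the stronger hypothesis only where genuinely needed.
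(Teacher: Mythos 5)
Your plan for part \ref{item:club_filter_above_one_measurable} cannot be carried out: the configuration you are trying to produce is contradictory. If $D$ is a club in $\kappa$ and $i$ is any elementary embedding of $V$ into a transitive inner model with $i(\kappa)=\kappa$, then $D'=i(D)$ is \emph{again a genuine club in $\kappa$ in $V$} --- closedness and unboundedness below $\kappa$ are absolute between $V$ and the target model --- and two clubs always intersect in a club. So no amount of ``interleaving the critical points with the gaps of $D$'' can make $D\cap D'$ bounded; the intended contradiction (``two sets in $\Club_\kappa$ with bounded intersection'') is unobtainable, and the same obstruction kills the version of the argument for part \ref{item:club_filter_at_limit_of_measurables}. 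There is also a secondary error in the transfer step: you invoke ``the downward direction of $\Pi_1$ absoluteness'' to get $\psi(\vec p,D')$ to hold in $V$ from its holding in the iterate, but downward absoluteness runs from $V$ \emph{into} the transitive submodel; $\Pi_1$ statements true in an inner model need not lift to $V$. (One can argue entirely inside the iterate to get both $D,D'$ into \emph{its} club filter, but that only re-proves that $D\cap D'$ is unbounded, which is true anyway.)

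The paper's proof goes through a different, and essentially forced, route: a $\Pi_1(S_j)$ definition of $\Club_\kappa$ makes \emph{stationarity} $\Sigma_1(S_j)$ (via $S$ stationary iff $\kappa\setminus S\notin\Club_\kappa$), and $\Sigma_1$ statements are \emph{upward} absolute from $M=\Ult(V,U)$ to $V$; since $j$ fixes the parameters and $\kappa$, every $S\in\pow(\kappa)\cap M$ that $M$ believes stationary must really be stationary in $V$. One then exhibits a counterexample: $S=\{\alpha<\kappa\mid\cof^{M}(\alpha)=\mu^+\}$ is stationary in $M$ (this is where $\cof(\kappa)>\mu$ enters), yet it is disjoint from the closure of $j``\kappa$, which is club in $V$, so $S$ is nonstationary in $V$. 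So the object being ``moved off itself'' is a stationary--costationary set seen from inside the ultrapower, not a club; that asymmetry (club filter versus its dual ideal, $\Pi_1$ versus $\Sigma_1$) is precisely what your proposal is missing. Part \ref{item:club_filter_at_limit_of_measurables} then reduces to part \ref{item:club_filter_above_one_measurable} by using Lemma \ref{lem:ordinals_ev_stable_meas_limit} to choose $\mu<\kappa$ below which the parameters sit and whose single ultrapower fixes the ordinal parameters, as you correctly anticipated for the parameter bookkeeping.
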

\begin{proof}Part~\ref{item:club_filter_above_one_measurable}:
 Suppose otherwise. Then the set of stationary subsets of $\kappa$ is $\Sigma_1(S_j)$,
 since given $S\sub\kappa$,
 $S$ is stationary iff $\kappa\cut S\notin\Club_\kappa$. Since this equivalence is preserved by $j$ and $j(\kappa)=\kappa$,
 we have that for each $S\in\pow(\kappa)\cap M$, if $M\sats$ ``$S$ is stationary''
 then $S$ is stationary.
 Let $S=\{\alpha<\kappa\bigm|\cof^{M}(\alpha)=\mu^+\}$.
 Then $M\sats$ ``$S$ is stationary'' (since $\cof(\kappa)>\mu$, we have $\cof^M(\kappa)>j(\mu)$).
 But $S$ is not stationary, a contradiction.
 For let $C^-=j``\kappa$ and let $C$  be the closure of $C^-$. So $C$ is club in $\kappa$, so it suffices to see that $C\cap S=\emptyset$.
 But $S\cap\rg(j)=\emptyset$,
 and every $\alpha\in C\cut C^-$
 is such that $\cof(\alpha)=\mu$,
 and since $M$ is closed under $\mu$-sequences,
 therefore $\cof^{M}(\alpha)=\mu$ also, so $\alpha\notin S$. This completes the proof.

 Part~\ref{item:club_filter_at_limit_of_measurables}: This is an immediate corollary of part~\ref{item:club_filter_above_one_measurable} and Lemma~\ref{lem:ordinals_ev_stable_meas_limit}.
\end{proof}

\section{Global Regularity Properties}\label{sec:global}

In this section we describe a variant of Schlicht \cite[Theorem 2.19]{schlicht_perfect_set_property}.
The adaptation involves standard forcing techniques; see for example \cite{cummings_handbook}.
\begin{tm}\label{tm:supercompact_forcing}
 Assume ZFC + $\kappa$ is $\lambda$-supercompact, where $\lambda>\kappa$ is inaccessible
 and $2^\lambda=\lambda^+$. Then in a forcing extension $V[G]$, $\kappa$ is $\lambda$-supercompact, $(2^\kappa)^{V[G]}=\kappa^{+V[G]}=\lambda$ and
 for every $X\sub\pow(\kappa)^{V[G]}$
 such that $X$ is definable over $V[G]$
 from elements of $V\cup \pow(\kappa)^{V[G]}$ and $X$ has cardinality $>\kappa$, we have that
  $X$ has a perfect subset,
  and  $X$ is not a wellorder.
\end{tm}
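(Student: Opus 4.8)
The plan is to adapt Schlicht's argument \cite[Theorem 2.19]{schlicht_perfect_set_property} to the supercompact setting, the key point being that $\lambda$-supercompactness of $\kappa$ is preserved by the forcing while simultaneously collapsing $2^\kappa$ to $\lambda$ and forcing a strong homogeneity/genericity property onto subsets of ${}^\kappa\kappa$. First I would define the iteration: a $\leq\kappa$-support (or Easton-style at $\kappa$, then $\leq\kappa$-support up to $\lambda$) iteration $\PP$ of length $\lambda$, where at each stage we force with $\mathrm{Add}(\kappa,1)$ (or a suitable collapse/coding poset) interleaved with $\mathrm{Col}(\kappa,\alpha)$ for $\alpha<\lambda$, arranged so that in $V[G]$ we have $2^\kappa=\kappa^+=\lambda$ and every subset of $\kappa$ in $V[G]$ appears at some bounded stage. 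The standard reflection/chain-condition bookkeeping (the iteration is $\kappa^+$-cc after the first $\kappa$ stages, $<\kappa$-closed, and has size $\lambda$) gives cardinal preservation below $\kappa$ and that $\lambda$ becomes $\kappa^+$; since $2^\lambda=\lambda^+$ in the ground model and $\lambda$ is inaccessible, a Laver-function/Menas-style argument shows $\PP$ can be chosen so that $\kappa$ remains $\lambda$-supercompact in $V[G]$ (lift an embedding $j:V\to M$ with $M^\lambda\subseteq M$ to $j:V[G]\to M[G*H]$, building the generic $H$ for $j(\PP)$ above $\lambda$ using closure of $M$ and that the tail of $j(\PP)$ is sufficiently closed, with the master condition coming from $G$).

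Next I would establish the definable perfect set / non-wellorder property. Given $X\subseteq\pow(\kappa)^{V[G]}$ definable from a parameter $p\in V$ and a parameter $a\in\pow(\kappa)^{V[G]}$, with $|X|>\kappa$: fix a stage $\beta<\lambda$ with $a\in V[G_\beta]$. The remaining iteration $\PP_{[\beta,\lambda)}$ (from the point of view of $V[G_\beta]$) is, by the design of $\PP$, isomorphic to a forcing with a high degree of homogeneity and with a tree-like structure of conditions of size $\kappa$ that are dense; in particular the poset $\mathrm{Add}(\kappa,1)$-like components give us a perfect tree of mutually generic ``branches''. Since $X$ has more than $\kappa$ elements, by a density/genericity argument (every name for an element of $X$ forced by some condition to lie in $X$ can be decided along a subtree), one reads off a perfect embedding $\iota:{}^\kappa\kappa\to{}^\kappa\kappa$ (equivalently $\iota:{}^\kappa 2\to\pow(\kappa)$) whose range lies in $X$: the $2^{<\kappa}$-many splitting nodes of the perfect tree yield, via mutual genericity, $2^\kappa$-many distinct elements of $X$ arranged perfectly. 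For the non-wellorder claim, suppose $<^*$ were a wellorder of such an $X$ with $|X|>\kappa$; then $<^*$ restricted to a perfect subset $P\subseteq X$ obtained as above would wellorder $P$, but the homogeneity of the coding forcing (a symmetry swapping two generic branches) shows that for mutually generic $x,y$ neither $x<^*y$ nor $y<^*x$ can be forced, contradiction — equivalently, one extracts a strictly $<^*$-descending $\omega$-sequence by the usual back-and-forth with the symmetries, exactly as in Schlicht's proof.

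The main obstacle I expect is the simultaneous preservation of $\lambda$-supercompactness and the genericity/coding property: the lifting argument for the embedding $j$ needs the tail forcing $j(\PP)_{[\lambda, j(\lambda))}$ to be sufficiently directed-closed in $M[G*H_{\text{low}}]$ so that a generic over it can be built in $V[G]$ (using $M^\lambda\subseteq M$ and $2^\lambda=\lambda^+$ to enumerate the dense sets in a $\lambda^+$-sequence and meet them one by one), while at the same time the \emph{first} $\kappa$ coordinates of $\PP$ must be the coding/collapsing poset that destroys $2^\kappa>\lambda$ and is responsible for the perfect-set property — so one must check these two requirements are compatible, typically by putting the ``supercompactness-anticipating'' Laver function to work only at inaccessible stages $<\lambda$ and the coding at the remaining stages. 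I would also need to verify carefully that \emph{every} $X$ definable from $V\cup\pow(\kappa)^{V[G]}$ is captured, not just those with a single real parameter; this is handled by absorbing any parameter from $\pow(\kappa)^{V[G]}$ into a bounded stage and any parameter from $V$ into the (ground-model-definable) forcing, so that the homogeneity of the tail forcing applies to the relevant statement uniformly. The remaining steps — chain condition, closure, cardinal arithmetic in $V[G]$, and the combinatorics of reading a perfect tree out of the iteration — are routine and follow \cite{schlicht_perfect_set_property} and the standard references \cite{cummings_handbook}.
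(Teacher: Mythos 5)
Your overall strategy is the paper's: force with a homogeneous ${<}\kappa$-closed collapse so that $2^\kappa=\kappa^+=\lambda$ in the extension, lift the $\lambda$-supercompactness embedding via a master condition together with a tail generic built by hand (using $2^\lambda=\lambda^+$ and closure of $M$ under $\lambda$-sequences to meet $\lambda^+$ dense sets), obtain the perfect subset property from Schlicht's argument for $\mathrm{Col}(\kappa,{<\lambda})$, and kill definable wellorders by homogeneity. Your non-wellorder step is phrased differently (a symmetry swapping mutually generic branches), whereas the paper observes that each element of the field of a definable wellorder is definable from its rank and the parameters over an intermediate model $V[G,(G^+)_\kappa\rest\beta]$ by homogeneity of the tail, hence lies in that model, contradicting $\card(X)>\kappa$; both arguments are standard and fine, and your handling of the parameters (absorbing a subset of $\kappa$ into a bounded stage, a $V$-parameter into the definition of the forcing) matches the paper.

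The genuine gap is in the preservation of $\lambda$-supercompactness. For the iteration as you describe it --- a $\leq\kappa$-support iteration of length $\lambda$ all of whose components ($\mathrm{Add}(\kappa,1)$, $\mathrm{Col}(\kappa,\alpha)$) live ``at $\kappa$'', with no preparatory forcing below $\kappa$ --- the image $j(\PP)$ is a length-$j(\lambda)$ iteration of posets living at $j(\kappa)$, so $\PP$ is \emph{not} an initial segment of $j(\PP)$ and $G$ generates no part of a $j(\PP)$-generic; in particular your factorization ``$j(\PP)_{[\lambda,j(\lambda))}$ over $M[G*H_{\mathrm{low}}]$'' has no meaning, since $G$ is not generic for $j(\PP)\rest\lambda$. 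The ``master condition coming from $G$'' would have to be a single condition of $j(\PP)$ below every element of $j``G$; but $G\notin M$, hence $j``G\notin M$, and the coordinatewise union of $j``G$ is not obviously an element of $M$, so the ${<}j(\kappa)$-directed-closure of $j(\PP)$ \emph{in $M$} does not produce it. The paper's design avoids exactly this: the Easton iteration of collapses $\mathrm{Col}(\alpha,{<\lambda_\alpha})$ at inaccessibles $\alpha<\kappa$ makes $G^+$ generic for the initial segment of $j(\PP^+)$ through stage $\kappa$, the master condition $\widetilde{q}$ is then computed \emph{in} $M[G^+]$ from $j``G^+$ (available there because ${}^\lambda(M[G^+])\cap V[G^+]\sub M[G^+]$), and only the $\leq\lambda$-closed tail needs a generic constructed by meeting $\lambda^{+}$ dense sets. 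Your fallback via a Laver function does not repair this as stated: Laver preparation is an iteration \emph{below} $\kappa$, not ``at inaccessible stages $<\lambda$'', and since only $\lambda$-supercompactness is assumed you would need a degree-preserving indestructibility theorem rather than Laver's. Either adopt a preparation below $\kappa$ as in the paper, or supply such an indestructibility argument; as written, the lifting step fails.
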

\begin{proof}
 Let $\mathbb{P}$ be length $\kappa$ Easton support iteration $\big(\left<\PP_\alpha\right>_{\alpha\leq\kappa},\left<\dot{\QQ}_\alpha\right>_{\alpha<\kappa}\big)$,
 where at inaccessible stages $\alpha$,
 if there is an inaccessible $\lambda'>\alpha$ such that $\alpha$ is ${<\lambda'}$-supercompact but not $\lambda'$-supercompact, then letting $\lambda_\alpha$ be the least such $\lambda'$,
 we force with the name $\dot{\QQ}_\alpha$ for the Levy collapse $\mathrm{Coll}(\alpha,{<\lambda_\alpha})$
 of $V[\dot{G}\rest\alpha]$ (where $\dot{G}\rest\alpha$ is the canonical name for the generic for $\PP_\alpha$),
and at other stages $\alpha$,
$\dot{\QQ}_\alpha$ is trivial.
 Let $\PP^+=\PP *\dot{\QQ}$
 where $\dot{\QQ}$ is the $\PP$-name for $\mathrm{Coll}(\kappa,{<\lambda})^{V[\dot{G}]}$.
 Let $G^+$ be $(V,\PP^+)$-generic
 and $G=G^+\rest\kappa$ (so $G$ is $(V,\PP)$-generic).

 Note that there is a $\lambda$-supercompactness embedding $j:V\to M$ such that $M\sats$ ``$\kappa$ is not $\lambda$-supercompact''. (Otherwise we can produce a linear iteration of $V$, of length $\om$, via $\lambda$-supercompactness measures with critical point $\kappa$, whose direct limit is illfounded.)
So fix such a $j$. Clearly $M\sats$ ``$\kappa$ is ${<\lambda}$-supercompact''.
 It follows that $\PP^+=j(\PP)\rest(\kappa+1)$.
And $G^+$ is $(M,\PP^+)$-generic.
 We have $\lambda=\kappa^{+V[G^+]}=\kappa^{+M[G^+]}$.
 Now $V\sats$ ``${^\lambda}M\sub M$'',
 and considering $\PP^+$-names in $V$
 for $\lambda$-sequences in $V[G^+]$,
 a straightforward calculation gives that $V[G^+]\sats$ ``${^\lambda}(M[G^+])\sub M[G^+]$''.

 Let $H^+=j``G^+$. So $H^+\in M[G^+]$.
 Working in $M[G^+]$,
 let $\widetilde{q}$ be the canonical $j(\PP)\rest(\kappa+1,j(\kappa))$-name for the union
 of those conditions of form $(j(p)_{j(\kappa)})_{G^+*\dot{I}}$
 for $p\in G^+$,
 where $\dot{I}$ is the canonical
 name for the generic filter for the tail forcing $j(\PP)\rest(\kappa+1,j(\kappa))$,
 and where $j(p)_\alpha$
 is the $\alpha$th component of $j(p)$.
 Note that in $M[G^+]$,
 $\widetilde{q}$ is forced (by $j(\PP)\rest(\kappa+1,j(\kappa))$)
 to be a $\mathrm{Coll}(j(\kappa),{<j(\lambda)})$-name, because the collection of conditions is forced to be pairwise compatible, even closed under witnesses for such compatibility,
 and because the collection has size $\lambda$, which is smaller than the closure of the forcing
 (which is $j(\kappa)$). So in $M[G^+]$, $\widetilde{q}$ yields a condition
 in $j(\PP)\rest(\kappa+1,j(\kappa))*\dot{\mathrm{Coll}}(j(\kappa),{<j(\lambda)})$.

 Now working in $V[G^+]$,
 we construct an $M[G^+]$-generic
 filter $J$ for this tail forcing $\QQ$ with $\widetilde{q}\in J$. This is done by enumerating the relevant maximal antichains in ordertype $\lambda^+$ and then recursively building a descending sequence of conditions meeting them, below $\widetilde{q}$. Let us observe why this is possible. Note that $M[G^+]\sats$ ``$j(\lambda)$ is inaccessible, $\QQ$
 has cardinality $j(\lambda)$ and has the $j(\lambda)$-cc''.
 So $M[G^+]$ has an enumeration of its maximal antichains of $\QQ$ in ordertype $j(\lambda)$.
 But $\mathrm{card}^{V[G^+]}(j(\lambda))=\card^V(j(\lambda))=2^\lambda=\lambda^+$, so $V[G^+]$ has an enumeration in ordertype $\lambda^+$. And $M[G^+]\sats$ ``$\QQ$ is $(\lambda+1)$-closed''. But $M[G^+]$ is closed under $\lambda$-sequences in $V[G^+]$, so $V[G^+]\sats$ ``$\QQ$ is $(\lambda+1)$-closed''. Therefore working in $V[G^+]$, an $M[G^+]$-generic filter can indeed be constructed, as desired.

 So $M[G^+,J]\sub V[G^+]$.
 But now $j:V\to M$
 extends to $j^+:V[G^+]\to M[G^+,J]$
 with $j^+(G^+)=G^+*J$,
 and  $j^+$ witnesses that $\kappa$ is $\kappa^{+V[G^+]}$-supercompact in $V[G^+]$.

 It remains to verify the desired regularity properties of sets $X$ in $V[G^+]$. So suppose that $X\sub\pow(\kappa)$ is definable over $V[G^+]$ from some $A\sub\kappa$
 and some $a\in V$, and $X$ has cardinality $\lambda=\kappa^{+V[G^+]}$
 in $V[G^+]$.

 The fact that $X$ has a perfect subset follows from  \cite[Theorem 2.19]{schlicht_perfect_set_property},
 since $\kappa$ is regular in $V[G]$, $\lambda$ inaccessible in $V[G]$, and the last factor of $\PP^+$ is just $\mathrm{Coll}(\kappa,{<\lambda})^{V[G]}$.

 Let us see that $X$ is not a wellorder.
 Suppose otherwise.
 There is some $\beta<\lambda$
 such that $A\in V[G,(G^+)_\kappa\rest\beta]$,
 where $(G^+)_\kappa\rest\beta$
 is just the restriction of $(G^+)_\kappa$
 to the conditions with support $\sub\beta$.
 Since $X$ has cardinality $>\kappa$ in $V[G^+]$,
 there must be some $B$ in the field of $X$ which is not in $V[G,(G^+)_\kappa\rest\beta]$.
 Let $\xi\in\OR$ be the rank of $B$ in the wellorder.
 Then by the homogeneity of the tail forcing adding $(G^+)_\kappa\rest[\beta,\lambda)$,
 one can define $B$ from the parameters $A,a,\xi$ over $V[G,(G^+)_\kappa\rest\beta]$, by consulting the forcing relation. So $B\in V[G,(G^+)_\kappa\rest\beta]$, a contradiction.
\end{proof}

\section{Questions}\label{sec:questions}
I now list some questions to which I would like to know the answers. (ZFC is still the background.)

\begin{enumerate}
 \item Let $\mu$ is measurable,
 and $U$ a $\mu$-complete ultrafilter over $\mu$. Suppose
 $\mu<\kappa$ and $\kappa$ is a $\mu$-steady cardinal,
 and $\kappa$ is not a limit of measurables.
 Can there be an ultrafilter $\mathscr{F}$ over $\kappa$ which
 contains all unbounded subsets of $\kappa$, and which is $\Sigma_1(V_\mu\cup S_\kappa)$-definable,
 where $S_\kappa=\{x\bigm|i^{\Tt_U}_{0\alpha}(x)=x\text{ for all }\alpha<\kappa\}$?
 What if $\mathrm{Club}_\kappa\sub\mathscr{F}$?
 What if $V=L[U]$?
 What about ``$\Pi_1$'' replacing ``$\Sigma_1$''?
 \item Assume $\kappa$ is a limit of measurables.
 Can there be a $\Sigma_1(\her_\kappa\cup\OR)$ maximal independent family $\sub\pow(\kappa)$?

 \item Does Theorem~\ref{tm:regular_kappa_Pi_1_kappa_mad_families}
 generalize to the case that $\kappa$ is singular? What about Theorem~\ref{tm:regular_kappa_Pi_1_kappa_mi_families}?

 \item Assume $V=L[U]$ where $U$ is a normal measure on $\mu$.
 Let $\kappa>\mu^+$ be $\mu$-steady.
 Let $X$ be as in the proof of Theorem~\ref{tm:L[U]_non-basis}
 part~\ref{item:Pi_1_singletons_non-basis}. Is $X$ $\Pi_1(\{\kappa\})$-definable? It does not seem clear to the author that $A\in X$
 iff for every $\kappa$-good filter $D$ and every ordinal $\alpha>\kappa$, if $A\in L_\alpha[D]$ then there is a limit ordinal $\eta<\kappa$
 such that $L_\alpha[D]\sats$ ``$A\in\bigcap_{\beta<\eta}M^{\Tt_D}_\beta$ but $A\notin M^{\Tt_D}_\eta$''.
 For it seems that we might have $A\in X$ and some $\kappa$-good filter $D$
 and some $\alpha<\kappa^+$
 such that $A\in L_\alpha[D]$
 but $A\notin\Ult(L_\alpha[D],D)$; in this case, since $A\in X$,
 we must actually have $A\in\Ult(L[D],D)$;
 it is just that $\alpha$ is not large enough to see where $A$ is constructed in $\Ult(L[D],D)$.
 But the author doesn't know whether this actually occurs,
 and whether one could compute any useful bound on where in $\Ult(L[D],D)$ a set $A\sub\kappa$
 can be constructed,
 given where it is constructed in $L[D]$.
 \item Under the hypotheses of Theorem~\ref{tm:3.11} part~\ref{item:I_2_PSP},
 can one conclude that there is a perfect embedding $\pi:{^{\om}}\lambda\to\pow(\lambda)$ with $\rg(\pi)\sub D$? (So here the corresponding tree should be $\lambda$-splitting, instead of just binary splitting.)
\end{enumerate}

\section*{Acknowledgements}
This research was funded in whole by the Austrian Science Fund (FWF) [10.55776/Y1498].
Therefore we have the following. This research was funded in whole or in part by the Austrian Science Fund (FWF) [10.55776/Y1498]. For open access purposes, the author has applied a CC BY public copyright license to any author accepted manuscript version arising from this submission.

The author would like to  thank the anonymous referee for their time and effort in reading the paper and their
helpful corrections and suggestions for improvements.

\bibliographystyle{plain}
\bibliography{low_level_definability_above_LCs_bibliography}

\end{document}